\newcommand*{\defeq}{\mathrel{\vcenter{\baselineskip0.5ex \lineskiplimit0pt
			\hbox{\scriptsize.}\hbox{\scriptsize.}}}%
	=}
\newcommand{\Var}{{\mathbb{V}\mathrm{ar}}\,}
\begin{document}

\title{An Application of Fractional Differential Equations to Risk Theory
}


\author{Corina D. Constantinescu        \and
        Jorge M. Ramirez				\and
        Wei R. Zhu
}


\institute{Corina D. Constantinescu and Wei R. Zhu \at
              Institute for Financial and Actuarial Mathematics \\
              University of Liverpool, L69 7ZL, UK\\
              \email{C.Constantinescu@liverpool.ac.uk}           
           \and
           Jorge M. Ramirez \at
            Universidad Nacional de Colombia, Sede Medellin\\
            Cra 65 59A - 110
            Medellin - Colombia
}

\date{Received: date / Accepted: date}

\maketitle

\begin{abstract}
This paper defines a new class of fractional differential operators alongside a family of random variables whose density functions solve fractional differential equations equipped with these operators. These equations can be further used to construct fractional integro-differential equations for the ruin probabilities in collective renewal risk models, with inter-arrival time distributions from the aforementioned family. Gamma-time risk models and fractional Poisson risk models are two specific cases among them, whose ruin probabilities have explicit solutions, when claim sizes distributions exhibit rational Laplace transforms.

\keywords{Ruin probability \and Fractional differential operator \and Collective Risk model}

 \PACS{G22 \and C02 \and G33}

 \subclass{62P05 \and 60K05 \and 26A33}
\end{abstract}

\section{Introduction}
The concept of first passage time is widely used in financial mathematics and actuarial science. It could model various things, from the time to dividends payment of a stock, to the exercise date of an American put option, or the ruin probability of an insurance company. In this paper we focus on the \textit{ruin time of an insurance business}, namely the first time in which the business surplus (capital) becomes negative. Our analysis is aimed at solving equations for the probability of ruin expressed as a function of the initial capital (surplus) of the risk process.\\

Motivated by risk theory applications, we consider a new class of risk processes, while extending those from \cite{li2004ruin, albrecher2010algebraic, biard2014fractional} into a fractional derivative framework. It has been proved that ruin probabilities are exponential functions when claim sizes follow an exponential distribution, for various inter-arrival time distributions  \cite{AsmAlb2010}. This paper will derive explicit ruin probabilities in risk models with claim sizes whose distributions have rational Laplace transforms, and inter-arrival time densities solving fractional differential equations. Gamma-time risk models and fractional Poisson risk models are two particular cases among them. All the results are obtained due to the introduction of a new class of fractional differential operators, which extends those from \cite{babenko1986heat, podlubny1998fractional}. These operators generalize  the results from \cite{albrecher2010algebraic} to a fractional derivative framework, in which their explicit results concerning ruin probabilities become particular cases. Some existed ruin probability results are retrieved (see Example \ref{Ex_GammaRM} and \ref{Ex_FracPoisson} for details), and new results are derived. For instance, in the gamma-time risk model with Erlang(2) distributed claim sizes, the ruin probability has the form
\begin{equation*}
A_1 e^{-B_1 u}+A_2 e^{-B_2 u}, \quad u>0,
\end{equation*}
where $A_1$,$B_1$,$A_2$ and $B_2$ are constants that can be calculated on a case-by-case basis (see Example \ref{Gammatimegammaclaim}).\\

The classical collective insurance risk model describes the \textit{surplus} $ R(t) $ of an insurance company over time,
\begin{equation}\label{crm}
R(t)=u+ct-\sum_{i=1}^{N(t)}X_i,\quad t>0
\end{equation}
where $u>0$ is the \textit{initial capital} and $c>0$ is the \textit{premium rate}. The claims occur randomly. The positive random variable $ X_i $ describes the size of the $ i $-th claim, which happened after waiting $ T_i $ units of time since the last claim. The process $ N(t) $ gives the number of claims that have happened up to time $ t $. In the classical model \eqref{crm},  dating back to \cite{lundberg1903approximerad, lundberg1926forsakringsteknisk, Cramer}, all random variables are assumed independent and identically distributed. Moreover, the waiting times are usually assumed to be exponentially distributed, with the resulting counting process $ N(t) $ thus being a Poisson process. The \textit{ruin probability} of this compound Poisson risk model, for an initial capital $u$, is defined as
\begin{equation*}\label{Def_psi}
\psi(u)=\mathbb{P}\left[\left.\inf\{R(t):t>0\}<0\,\right|\,R(0)=u\right].
\end{equation*}
The net profit condition
\begin{equation}\label{Eq_NetProfit}
c\,\mathbb{E}\left[T_i\right]>\mathbb{E}\left[X_i\right]
\end{equation}
is imposed to ensure that ruin does not happen with certainty.
Various generalizations of the classical risk model \eqref{crm} have been considered over time. In \cite{andersen1957collective}, Sparre Andersen defined the renewal risk model. This model accounts for claim number processes $N(t)$ not necessarily Poisson, but verifying the renewal property. The ruin probabilities $\psi(u)$ in renewal models still solve integral equations derived from the renewal property, namely
\begin{align}\label{IEQ}
\psi(u)&=\int_{0}^{\infty}f_T(t)\left(\int_0^{u+ct}\psi(u+ct-y)\,\mathrm{d}F_X(y)+\int_{u+ct}^\infty\,\mathrm{d}F_X(y)\right)\,\mathrm{d}t
\end{align}
with the universal boundary condition $\lim_{u\rightarrow\infty}\psi(u)=0$, as in \cite{feller2008introduction}.  Here $f_T$ and $F_X$ denote the probability density of the waiting time, and the distribution function of the claim size, respectively. This notation  will be used throughout the paper.\\

There is a large actuarial literature analyzing renewal risk processes.  Expressions for the Laplace transform of the ruin probability for risk models with Erlang(2, $\beta$) or mixture  of 2-exponential waiting times were derived in \cite{dickson1998class, dickson1998ruin, dickson2001time} as solutions of second-order differential equations. \cite{lin2000moments} calculated the joint and marginal moments of the time of ruin, the surplus before ruin, and the deficit at ruin, whenever the inter-arrival times distributions have rational Laplace-Stieltjes transform. Subsequently, \cite{dufresne2002general} computed the Laplace transform of the non-ruin probability for inter-arrival times distributions exhibiting rational Laplace transforms. \cite{li2004ruin} used a similar approach as \cite{gerber1998time} to derive a defective renewal equation for the expected discounted penalty due at ruin in a risk model with Erlang($n$) inter-arrival times. Finally, \cite{chen2007ode} derived linear ordinary differential equations for ruin probabilities in Poisson jump-diffusion processes, with phase-type jumps and obtained explicit results in a few instances. The common thread of these paper consists on deriving the ruin probabilities as solutions of (integro-)differential equations. \\

In an attempt to develop a general method, \cite{rosenkranz2008integro,rosenkranz2008solving} introduced two algebraic structures for treating integral operators in conjunction with derivatives, integro-differential operators and integro-differential polynomials. Their method allows the description of the associated differential equations, boundary conditions and solution operators (Green's operator) in a uniform yet formal language. Their algebraic symbolic structures have immediate applications in ruin theory. For instance, as an extension of the Erlang risk model, \cite{albrecher2010algebraic} transformed the integral equation for the expected-discounted-penalty-due-at-ruin function into an integro-differential equation whenever the inter-arrival time distributions have rational Laplace transforms. Rational Laplace transforms densities are equivalent to densities that are solutions of ordinary differential equations with constant coefficients. If the claim size distributions also have rational Laplace transforms, these integro-differential equations can be further reduced to linear boundary value problems. Their symbolic computation approach permits extensions to models with premium dependent on reserves (also discussed in \cite{djehiche1993large} regarding the upper and lower bounds of finite ruin probabilities), the associated boundary problems involving then linear ordinary differential equations with variable coefficients  \cite{albrecher2013exact}. A similar duality idea has been studied in \cite{kolokoltsov2013stochastic} and the reference therein.\\

We show that the probability density function of a sum of independent, heterogeneous gamma and Mittag-Leffler random variables satisfies a fractional differential equation, which we write in an operator/symbolic form. As an application, we consider a family of risk models with inter-arrival times from this family of distributions, and derive the corresponding fractional integro-differential equations satisfied by the corresponding ruin probabilities.
We consider the case of claim sizes described by sums of heterogeneous gamma random variables and show that the corresponding ruin probabilities solve fractional differential equations with constant coefficients. These equations contain both left and right fractional differential operators. We annotate here that these equations can describe other physical phenomena exhibiting anomalous diffusion, as in  \cite{jiao2012existence} where the ``claim sizes" are height losses of the granular material contained in a silo over time \cite{leszczynski2011modeling}. For other applications, we refer to \cite{fix2004least, jiao2011existence, li2013existence, torres2014mountain} and the references therein. 
We also remark that Equation \eqref{timeclaim} presented in this paper can be seen as generalized cases of the fractional boundary problems treated by \cite{jin2016eigenvalue} where critical point theory is used to analyze fractional differential equations with Dirichlet boundary conditions.\\

The gamma-time risk model considered here is the first generalization of the case of Erlang($ n $)-distributed waiting times considered in \cite{li2004ruin}, to that of waiting times distributed as $\Gamma(r, \lambda $), $ r $ being now any positive real number. This is of significance since, in practice, parameter estimation methods usually yield non-integer-valued shape parameters for the gamma distributions that best fit the available data. It thus becomes necessary to study the ruin theory related to real-valued gamma-distributed random variables. \cite{MR0381218} dealt with a special non-integer shape gamma $\Gamma (1/b,1/b)$, $b>1$ distributed claims case, and \cite{cons2017} provided three equivalent expressions for ruin probabilities in a Cram\'er-Lundberg model with gamma distributed claims. Prior to this work, as far as we know, there are no results for non-integer shape gamma-time risk model in the ruin theory literature. The fractional Poisson risk model has been previously treated in \cite{beghin2013large} and \cite{biard2014fractional} for exponential claim sizes, but here, via this fractional calculus approach, we are able to derive expressions for the ruin probability for a larger class of claim sizes in fractional Poisson models.\\

The paper is organized as follows. In Section \ref{operators} we introduce the concept of fractional integro-differential operators. In Section \ref{MainResults} we present the main result and  finally, in Section \ref{2exps},  we perform some illustrative numerical calculations and compare the behavior of the ruin probabilities as a function of the model parameters, for both, the gamma distributed waiting time, and the fractional Poisson risk models. Appendix \ref{bkfc} contains all necessary background on fractional calculus.


\section{Fractional Integro-Differential Operators}\label{operators}

Let $\mathcal{L}(y)$ denote the $n$-th degree polynomial $y^n+p_1y^{n-1}+\dots+p_{n-1}y+p_n$ and consider the following associated homogeneous ordinary differential equation with constant coefficients
\begin{equation}\label{hode}
\mathcal{L}\left(\frac{\mathrm{d}}{\mathrm{d}x}\right)[f](x)=f^{(n)}(x)+p_1f^{(n-1)}(x)+\dots+p_{n-1}f'(x)+p_nf(x)=0.
\end{equation}
Suppose further that Equation \eqref{hode} can be expressed in the form
\begin{equation}\label{hode2}
\bigodot_{j=0}^{m} \left(\frac{\mathrm{d}}{\mathrm{d}x}+\lambda_{j}\right)^{k_{j}}[f](x)=0
\end{equation}
for positive real numbers $ \lambda_j $ and integers $ k_j $, $ j=1,\dots, m $. In \eqref{hode} and henceforth, $\bigodot$ denotes left-composition of operators, namely
\[ \bigodot_{j=1}^{m} \mathcal{L}_j [f] := (\mathcal{L}_m \circ \cdots \circ \mathcal{L}_1)[f].  \]
The solution $f(x)$ to \eqref{hode2} is the probability density function of either a sum of Erlang random variables or a mixed Erlang random variable, depending on the boundary conditions (see \cite{albrecher2010algebraic}). We would like generalize Equation \eqref{hode2}, and characterise its solutions in the case where the exponents $ k_j $ are no longer integers.
\subsection{Left and Right Fractional Differential Operators}
In order to generalize expression \eqref{hode}, it is necessary to explore the world of fractional calculus. Solving fractional differential equations has become an essential issue as fractional-order models appear to be more adequate than previously used integer-order models in various fields. A large number of available analytical methods for solving fractional order integral and differential equations is discussed in \cite{podlubny1998fractional}, including the Mellin transform method, the power series method, and the symbolic method. 

The symbolic method was first introduced in \cite{babenko1986heat} and generalizes the Laplace transform method: it uses a specific expansion (e.g., binomial or geometric) on the differential operator and writes it as an infinite sum of fractional derivatives. However, it is always necessary to check the validity of the formal expansion since the interchange of infinite summation and integration requires justification. It is nevertheless a powerful tool for determining the possible form of the solution. Numerous examples of the application of this method to heat and mass transfer problems are discussed in \cite{babenko1986heat}.

In this section we define a new family of operators based on the binomial expansion. All of the related definitions and propositions of fractional calculus can be found in Appendix \ref{bkfc}. The important motivation underlying the following definition comes from realising that for positive integer $ n $ and $ \alpha \in \mathbb{R} $,
\begin{equation*}
\left(\frac{\mathrm{d}}{\mathrm{d}x}+\alpha\right)^n [f](x) = e^{-\alpha x} \frac{\mathrm{d}^n}{\mathrm{d}x^n} \left[e^{\alpha x} f(x)\right],
\end{equation*}
and similarly for $ (-\frac{\mathrm{d}}{\mathrm{d}x}+\alpha)^n $. We thus define the following operators as the natural generalization in terms of fractional derivatives:
\begin{definition}\label{RockOp}
	Let $r>0$, $\alpha\in\mathbb{R}$, $a\in[-\infty,\infty)$ and $b\in(-\infty,\infty]$.  The \emph{left fractional differential operator (LFDO)} $\prescript{\alpha}{a} {\mathrm{R}}_x^r$ is defined by 
	\begin{equation}
	\prescript{\alpha}{a} {\mathrm{R}}_x^r \left[f\right](x)\defeq e^{-\alpha x}\prescript{}{a} {\mathrm{D}}_x^{r}\left[e^{\alpha x}\,f(x)\right]
	\end{equation}
	and the \emph{right fractional differential operator (RFDO)} $\prescript{\alpha}{x} {\mathrm{R}}_b^r$ by 
	\begin{equation}\label{RFDO}
	\prescript{\alpha}{x} {\mathrm{R}}_b^r \left[g\right](x)\defeq e^{\alpha x}\prescript{\mathrm{C}}{x}{\mathrm{D}}_b^{r}\left[e^{-\alpha x}\,g(x)\right].
	\end{equation}
	The domain of definition of $\prescript{\alpha}{a} {\mathrm{R}}_x^r$ and $\prescript{\alpha}{x} {\mathrm{R}}_b^r$ are those of the left Riemann-Liouville fractional derivatives $\prescript{}{a}{\mathrm{D}}_x^{r}$ and right Caputo fractional derivatives $\prescript{\mathrm{C}}{x}{\mathrm{D}}_b^{r}$ respectively, which are given in Definition \ref{Def_RLD} and Definition \ref{Def_Caputo}.
\end{definition}

In the case $ a=0 $, integration by parts yields the following characterisation of the formal adjoint of  $\prescript{\alpha}{0} {\mathrm{R}}_x^r$. Along with the integration by parts formula in Proposition \ref{PropFIBP}, this is the key calculation needed for the proof of our main result.

\begin{proposition}\label{Prop_AdjointR}
	Let $ \alpha \in \mathbb{R} $ and $ r>0 $. The formal adjoint with respect to integration by parts of the LFDO $\prescript{\alpha}{0} {\mathrm{R}}_x^r$ is the RFDO $\prescript{\alpha}{x} {\mathrm{R}}_\infty^r$, namely,
	$$
	\int_0^\infty \prescript{\alpha}{0} {\mathrm{R}}_x^r [f](x)\,g(x)\, \mathrm{d}x=\int_0^\infty f(x)\,\prescript{\alpha}{x} {\mathrm{R}}_\infty^r[g](x)\,\mathrm{d}x,
	$$
	for appropriate functions $f$ and $g$ (see Proposition \ref{PropFIBP}).
\end{proposition}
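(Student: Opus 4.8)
The plan is to reduce the identity to the standard fractional integration-by-parts formula between the left Riemann--Liouville derivative and the right Caputo derivative (Proposition \ref{PropFIBP}) by absorbing the exponential conjugation factors built into Definition \ref{RockOp}. First I would substitute both definitions directly into the two integrals: on the left, $\prescript{\alpha}{0}{\mathrm{R}}_x^r[f](x)=e^{-\alpha x}\,\prescript{}{0}{\mathrm{D}}_x^{r}[e^{\alpha x}f](x)$, and on the right, $\prescript{\alpha}{x}{\mathrm{R}}_\infty^r[g](x)=e^{\alpha x}\,\prescript{\mathrm{C}}{x}{\mathrm{D}}_\infty^{r}[e^{-\alpha x}g](x)$.

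The key observation is that the exponentials regroup across the product in each integrand. Setting $F(x)\defeq e^{\alpha x}f(x)$ and $G(x)\defeq e^{-\alpha x}g(x)$, the factor $e^{-\alpha x}$ emitted by the LFDO pairs with $g$ to produce $G$, while the factor $e^{\alpha x}$ emitted by the RFDO pairs with $f$ to produce $F$. After this bookkeeping, both sides collapse to the single exponential-free identity
\begin{equation*}
\int_0^\infty \prescript{}{0}{\mathrm{D}}_x^{r}[F](x)\,G(x)\,\mathrm{d}x=\int_0^\infty F(x)\,\prescript{\mathrm{C}}{x}{\mathrm{D}}_\infty^{r}[G](x)\,\mathrm{d}x,
\end{equation*}
which is exactly the statement that $\prescript{\mathrm{C}}{x}{\mathrm{D}}_\infty^{r}$ is the formal adjoint of $\prescript{}{0}{\mathrm{D}}_x^{r}$ on $(0,\infty)$.

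It then remains to invoke Proposition \ref{PropFIBP} applied to $F$ and $G$, which yields the displayed identity and hence the proposition. The main obstacle---in fact essentially the only substantive point---is checking that the hypotheses of Proposition \ref{PropFIBP} are met by the conjugated pair $(F,G)$ rather than by $(f,g)$, so that the boundary contributions at $0$ and $\infty$ vanish. I would therefore make explicit that the ``appropriate functions'' of the statement are precisely those $f,g$ for which $e^{\alpha x}f$ and $e^{-\alpha x}g$ possess the regularity and decay demanded by the Riemann--Liouville/Caputo integration by parts. The exponential weights must be tracked with care, since for $\alpha>0$ the factor $e^{\alpha x}$ degrades the decay of $F$ at infinity while $e^{-\alpha x}$ improves that of $G$; the boundary condition at $\infty$ has to be verified against this trade-off, and it is this compatibility, rather than any new fractional calculus, that constitutes the real content of the verification.
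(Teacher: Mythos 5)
Your proposal is correct and is essentially the paper's own argument: the paper proves this proposition simply by invoking integration by parts via Proposition \ref{PropFIBP}, and your conjugation bookkeeping with $F=e^{\alpha x}f$, $G=e^{-\alpha x}g$ is exactly the intended calculation, with the ``appropriate functions'' caveat playing the same role (vanishing boundary terms) in both. Your closing remark about tracking the exponential weights at infinity is a fair articulation of what the paper leaves implicit, not a deviation from its method.
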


Note that the LFDO can be used to construct differential equations for probability density functions. Consider a gamma probability density function with shape parameter $r\in\mathbb{R}^+$ and rate parameter $\lambda\in\mathbb{R}^+$, namely
\begin{equation*}
f_r(x) =\frac{\lambda^{r}}{\Gamma(r)}x^{r-1}e^{-\lambda x}, \quad x>0.
\end{equation*}

When $r$ is not an integer, instead of an ordinary differential equation, the gamma density function solves the fractional differential equation
\begin{equation}
\label{de}
\prescript{\lambda}{0} {\mathrm{R}}_x^{r}[f_r](x)=e^{-\lambda x}\,\prescript{}{0} {\mathrm{D}}_x^{r}\left[e^{\lambda x}\,f_r(x)\right]=0, \quad x>0,
\end{equation}
with boundary conditions $\prescript{\lambda}{0} {\mathrm{R}}_x^{r-1}[f_r](0)=\lambda^r$ and $\prescript{\lambda}{0} {\mathrm{R}}_x^{r-k}[f_r](0)=0$ for $k=2,\dots,\lceil{r}\rceil$. Another distribution related to the LFDO is the Mittag-Leffler distribution, which is the waiting time distribution in the fractional Poisson process (see in Appendix \ref{rfpp}). The Mittag-Leffler probability density function with parameter $\mu\in (0,1]$ and $\lambda\in\mathbb{R}^+$ is
\begin{equation*}
f_\mu(x) =\lambda x^{\mu-1}E_{\mu,\mu}(-\lambda x^\mu),\quad t>0,
\end{equation*}
and solves the following fractional differential equation
\begin{equation}\label{dde}
\left(\prescript{0}{0} {\mathrm{R}}_x^{\mu}+\lambda\right)[f_\mu](x)=(\prescript{}{0} {\mathrm{D}}_x^{\mu}+\lambda)[f_\mu](x)=0, \quad x>0,
\end{equation}
with the boundary condition $\prescript{}{0} {\mathrm{D}}_x^{\mu-1}[f](0)=\lambda$. Here, the function $E_{\mu,\mu}$ is called two-parameter Mittag-Leffler function, which is defined in Equation \eqref{TPML}.

\subsection{A generalized family of random variables}

The next theorem introduces the family of random variables to which the approach presented in this paper applies to. In its full generality, we consider random variables that can be written as finite sums of independent heterogeneous gamma and Mittag-Leffler random variables. At the moment, there is no known explicit formula for the probability density function of such a random variable, but one can always express it in a convolution form. Notice that if only gamma random variables with integer shape parameters are involved in the summation, this random variable is the generalized integer gamma distribution (GIG) \cite{coelho1998generalized}. We now characterise the fractional boundary value problem satisfied by the density function of such random variables.

\begin{theorem}\label{tMLGamma}
	Consider a random variable $T$ defined by
	\begin{equation}\label{Eq_Tsum}
	T=\sum_{i=1}^{m}Y_i+\sum_{j=1}^{n}Z_j,
	\end{equation}
	in terms of gamma random variables $Y_i\sim \Gamma(r_i,\lambda_{1,i})$ and Mittag-Leffler random variables $Z_j\sim \mathrm{ML}(\mu_j,\lambda_{2,j})$, all independent of each other. Here $r_i$, $\lambda_{1,i}$, $\lambda_{2,j}\in\mathbb{R}^+$ and $\mu_j\in(0,1]$. Then the density function $f_{T}^{m,n}(t)$ of $T$ solves the following fractional differential equation
	\begin{equation}\label{MLGamma}
	\mathcal{A}_{m,n}\left(\frac{\mathrm{d}}{\mathrm{d}t}\right)\left[f_{T}^{m,n}\right](t)\defeq\bigodot_{j=1}^{n}\left(\prescript{}{0} {\mathrm{D}}_t^{\mu_j}+\lambda_{2,j}\right) \bigodot_{i=1}^m \prescript{\lambda_{1,i}}{0} {\mathrm{R}}_t^{r_i}\left[f_{T}^{m,n}\right](t)=0,
	\end{equation}
	with boundary conditions (when $n\neq 0$)
	\begin{align*}
	&\prescript{}{0} {\mathrm{D}}_t^{\mu_1-1}\bigodot_{j=2}^{n}\left(\prescript{}{0} {\mathrm{D}}_t^{\mu_j}+\lambda_{2,j}\right) \bigodot_{i=1}^m \prescript{\lambda_{1,i}}{0} {\mathrm{R}}_t^{r_i}\left.[f_{T}^{m,n}](t)\right|_{t=0}=\Lambda_{m,n},\\
	\text{and}\quad\quad
	&\prescript{}{0} {\mathrm{D}}_t^{\mu_1-k}\bigodot_{j=2}^{n}\left(\prescript{}{0} {\mathrm{D}}_t^{\mu_j}+\lambda_{2,j}\right) \bigodot_{i=1}^m \prescript{\lambda_{1,i}}{0} {\mathrm{R}}_t^{r_i}\left.[f_{T}^{m,n}](t)\right|_{t=0}=0,
	\end{align*}
	for $k=2,\dots,\lceil \sum_{j=1}^n \mu_j +\sum_{i=1}^m r_i \rceil.$ Here and subsequently $\Lambda_{m,n}$ denotes
	\begin{equation}\label{Def_Lambda}
	\Lambda_{m,n} \defeq \prod\limits_{i=1}^m \lambda_{1,i}^{r_i}\prod\limits_{j=1}^n \lambda_{2,j}.
	\end{equation}
\end{theorem}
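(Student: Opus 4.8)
The plan is to prove the identity \eqref{MLGamma} by \emph{stripping off the convolution factors of} $f_T^{m,n}$ one at a time, arranging that each operator in the composition removes exactly one summand of $T$. Since $T$ is a sum of independent variables, $f_T^{m,n}$ is the convolution of the individual gamma and Mittag--Leffler densities; equivalently its Laplace transform factorises as $\widehat{f_T^{m,n}}(s)=\prod_{i=1}^m\bigl(\tfrac{\lambda_{1,i}}{s+\lambda_{1,i}}\bigr)^{r_i}\prod_{j=1}^n\tfrac{\lambda_{2,j}}{s^{\mu_j}+\lambda_{2,j}}$. Because the operators in $\mathcal{A}_{m,n}$ are ordered with the gamma LFDOs innermost and the Mittag--Leffler factors outermost, I would treat the two stages in that order.

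For the gamma stage the key observation is that $e^{\lambda t}f_{\Gamma(r,\lambda)}(t)=\lambda^{r}\,t^{r-1}/\Gamma(r)$ is exactly $\lambda^{r}$ times the kernel of the Riemann--Liouville fractional integral $\prescript{}{0}{\mathrm{I}}_t^{r}$. Writing $f_T^{m,n}=f_{Y_1}\ast R$ and using $e^{\lambda t}(\phi\ast\psi)=(e^{\lambda\cdot}\phi)\ast(e^{\lambda\cdot}\psi)$, one gets $e^{\lambda_{1,1}t}f_T^{m,n}=\lambda_{1,1}^{r_1}\,\prescript{}{0}{\mathrm{I}}_t^{r_1}[e^{\lambda_{1,1}\cdot}R]$. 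Applying $\prescript{}{0}{\mathrm{D}}_t^{r_1}$ and invoking the left-inverse identity $\prescript{}{0}{\mathrm{D}}_t^{r}\prescript{}{0}{\mathrm{I}}_t^{r}=\mathrm{Id}$ yields $\prescript{\lambda_{1,1}}{0}{\mathrm{R}}_t^{r_1}[f_T^{m,n}]=\lambda_{1,1}^{r_1}R$: the LFDO removes the $Y_1$ factor and contributes the constant $\lambda_{1,1}^{r_1}$. This argument is insensitive to the contents of $R$, so iterating over $i=1,\dots,m$ gives $\bigodot_{i=1}^m\prescript{\lambda_{1,i}}{0}{\mathrm{R}}_t^{r_i}[f_T^{m,n}]=\prod_{i=1}^m\lambda_{1,i}^{r_i}\,f^{0,n}$, where $f^{0,n}$ is the density of $\sum_{j=1}^n Z_j$. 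Note that this stage needs no boundary bookkeeping, since it rests on the clean operator identity above.

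For the Mittag--Leffler stage I would pass to Laplace transforms. For $0<\mu\le 1$ the Laplace transform of $\prescript{}{0}{\mathrm{D}}_t^{\mu}\phi$ is $s^{\mu}\widehat{\phi}(s)-[\prescript{}{0}{\mathrm{I}}_t^{1-\mu}\phi](0^{+})$, so for $f=f_{\mathrm{ML}(\mu,\lambda)}\ast R$ the operator $\prescript{}{0}{\mathrm{D}}_t^{\mu}+\lambda$ multiplies the transform by $s^{\mu}+\lambda$, cancelling the factor $\tfrac{\lambda}{s^{\mu}+\lambda}$ and leaving $\lambda\widehat{R}(s)$ minus the boundary term. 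The crucial point is that a convolution of Mittag--Leffler densities behaves like $C\,t^{(\sum\mu)-1}$ as $t\to 0^{+}$, so $\prescript{}{0}{\mathrm{I}}_t^{1-\mu}f$ vanishes at $0$ whenever at least one other Mittag--Leffler factor remains (its leading power of $t$ being the sum of the remaining $\mu$'s, hence positive); thus the boundary term drops and $(\prescript{}{0}{\mathrm{D}}_t^{\mu}+\lambda)[f]=\lambda R$. Stripping $Z_1,\dots,Z_{n-1}$ in this way reduces $\prod_i\lambda_{1,i}^{r_i}f^{0,n}$ to a constant multiple of a single Mittag--Leffler density, and the last operator $\prescript{}{0}{\mathrm{D}}_t^{\mu_n}+\lambda_{2,n}$ annihilates it by Equation \eqref{dde}; this establishes \eqref{MLGamma}. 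The pure-gamma case $n=0$ terminates instead at Equation \eqref{de}.

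The boundary conditions fall out of the \emph{same} reduction stopped one operator early. Applying all gamma operators together with the Mittag--Leffler operators $j=2,\dots,n$ reduces the inner expression to $\tfrac{\Lambda_{m,n}}{\lambda_{2,1}}\,f_{\mathrm{ML}(\mu_1,\lambda_{2,1})}$; evaluating $\prescript{}{0}{\mathrm{D}}_t^{\mu_1-1}=\prescript{}{0}{\mathrm{I}}_t^{1-\mu_1}$ of this at $t=0$ returns $\lambda_{2,1}$ by the boundary datum of \eqref{dde}, yielding $\Lambda_{m,n}$, whereas $\prescript{}{0}{\mathrm{D}}_t^{\mu_1-k}=\prescript{}{0}{\mathrm{I}}_t^{k-\mu_1}$ for $k\ge 2$ raises the leading power $t^{\mu_1-1}$ to $t^{k-1}$ and so vanishes at $0$; expanding $f_{\mathrm{ML}}$ in its $E_{\mu,\mu}$ series confirms that every higher-order term vanishes as well. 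I expect the main obstacle to be precisely this Mittag--Leffler stage: one must make the small-$t$ asymptotics of the convolution densities rigorous (from the series of $E_{\mu,\mu}$ and the convolution of power functions), verify the vanishing of each intermediate boundary term, and check that the Laplace manipulations and the identity $\prescript{}{0}{\mathrm{D}}_t^{r}\prescript{}{0}{\mathrm{I}}_t^{r}=\mathrm{Id}$ are valid on the relevant class of functions.
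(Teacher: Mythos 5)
Your proposal is correct and, at its core, it is the same argument the paper gives in Appendix~\ref{appB}: each operator in $\mathcal{A}_{m,n}$ strips exactly one convolution factor of $f_T^{m,n}$ and contributes its constant ($\lambda_{1,i}^{r_i}$ or $\lambda_{2,j}$), the final operator annihilates the last remaining density via \eqref{de} or \eqref{dde}, and the boundary conditions come from stopping one operator early and evaluating $\prescript{}{0}{\mathrm{D}}_t^{\mu_1-k}$ of a single Mittag--Leffler density at $t=0$ through its series --- precisely the paper's computation $\lambda_{2,1}t^{k-1}E_{\mu_1,k}(-\lambda_{2,1}t^{\mu_1})\big|_{t=0}$. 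The paper packages the stripping as a double induction on $(m,n)$ built around the auxiliary identity $\mathcal{A}_{m,n}\left[f_T^{m,n}*g\right](t)=\Lambda_{m,n}\,g(t)$ for any $g$ supported on $[0,\infty)$; your sequential version, which follows the actual composition order from innermost to outermost, is the same idea and is arguably tidier, since the paper's inductive step inserts the new operator in the middle of the composition and therefore implicitly relies on these operators commuting on the relevant densities. The one genuine methodological difference is the Mittag--Leffler step: you justify it by Laplace transforms, which forces you to show the boundary term $\left[\prescript{}{0}{\mathrm{I}}_t^{1-\mu_j}f\right](0^{+})$ vanishes via small-$t$ asymptotics of convolutions of Mittag--Leffler densities --- exactly the technical obstacle you flag at the end. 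That obstacle can be bypassed by arguing as the paper implicitly does in its base case: apply the derivative-of-convolution formula (Proposition~\ref{rldci}) with kernel $K=f_{\mathrm{ML}(\mu_j,\lambda_{2,j})}$, so the boundary term is the \emph{exact} Mittag--Leffler datum $\lim_{t\rightarrow 0^{+}}\prescript{}{0}{\mathrm{D}}_t^{\mu_j-1}K(t)=\lambda_{2,j}$ rather than something to be estimated, and then
\begin{equation*}
\left(\prescript{}{0}{\mathrm{D}}_t^{\mu_j}+\lambda_{2,j}\right)[K*R]=\left(\prescript{}{0}{\mathrm{D}}_t^{\mu_j}K\right)*R+\lambda_{2,j}R+\lambda_{2,j}\,K*R=\lambda_{2,j}R,
\end{equation*}
using $\prescript{}{0}{\mathrm{D}}_t^{\mu_j}K=-\lambda_{2,j}K$ from \eqref{dde}, with no asymptotic analysis needed and no restriction that another Mittag--Leffler factor remain. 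With that substitution (or with your asymptotics made rigorous, which is routine term-by-term work with the $E_{\mu,\mu}$ series), your outline closes every step the theorem requires.
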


\begin{proof}
	We defer the proof of Theorem \ref{tMLGamma} to Appendix \ref{appB}.
\end{proof}

\begin{remark}
	We further assume that all $\lambda_{1,i}$  are different, i.e., $\lambda_{1,i}\neq \lambda_{1,k}$ for all $i\neq k$. In other words, each variable $Y_i$ has the gamma distribution with different rate parameters. The uniqueness of the $\lambda_{1,i}$, rate parameter of the gamma random variable could be realized without any loss of generality. Whenever we have $\lambda_{1,i}=\lambda_{1,k}$, $i\neq k$, we would consider the sum of their corresponding random variables, which is still a gamma random variable. 
\end{remark}

\begin{remark}\label{kbc}
	One can show that the boundary conditions in Theorem \ref{tMLGamma} have various equivalent expressions. For any positive integer number $ k\leqslant\lceil \sum_{i=1}^m r_i+\sum_{j=1}^n \mu_j\rceil,$ by choosing non-negative integers $k_{1,i}$ and $k_{2,j}$ such that
	$\sum_{i=1}^m k_{1,i} +\sum_{j=1}^n k_{2,j} =k,$
	we have the boundary conditions of Equation \eqref{MLGamma} as
	\[  \left(\bigodot_{j=1}^{n}\left( \prescript{}{0} {\mathrm{D}}_t^{\mu_j-k_{2,j}}+\lambda_{2,j}\prescript{}{0} {\mathrm{I}}_t^{k_{2,j}}\right)\bigodot_{i=1}^m \prescript{\lambda_{1,i}}{0} {\mathrm{R}}_t^{r_i-k_{1,i}}\right)\left.[f_T^{m,n}](t)\right|_{t=0}= \begin{cases*}
	\Lambda_{m,n},\,k=1\\
	\quad\\
	0,\quad\,\,\,\, k>1.
	\end{cases*} \]
\end{remark}

\begin{remark}
	Equation \eqref{MLGamma} along with its boundary conditions can be regarded as the generalization of a pair of boundary problems discussed in \cite{rosenkranz2008solving}. When the fractional differential algebra is properly defined, these fractional-order boundary problems can be factorised and further solved by obtaining their corresponding Green's operators.
\end{remark}

The solution to Equation \eqref{MLGamma} depends on the boundary condition. When different boundary conditions are given, we may obtain density functions for other possible random variables. For instance, let us consider the differential equation 
$$\left(\frac{\mathrm{d}}{\mathrm{d}t}+\lambda\right)^2 f^{2,0}_T(t)=0$$
with two distinct sets of boundary conditions. First, if we impose
$$
\begin{cases}
\left(\frac{\mathrm{d}}{\mathrm{d}x}+\lambda\right) f^{2,0}_T(t)|_{t=0}=\lambda^2,\\
\quad\\
\lambda f^{2,0}_T(t)|_{t=0}=0,
\end{cases}
$$
the solution is the Elang($2,\lambda$) density function $f^{2,0}_T(t)=\lambda^2te^{-\lambda t}$ which belongs to the random variable family considered in Equation \eqref{Eq_Tsum}. However, the solution to the above equation would become $f^{2,0}_T(t)=\frac{1}{2}\lambda e^{-\lambda t}+\frac{1}{2}\lambda^2te^{-\lambda t}$ if the boundary conditions are changed to
$$
\begin{cases}
\left(\frac{\mathrm{d}}{\mathrm{d}x}+\lambda\right)f^{2,0}_T(t)|_{t=0}=\frac{1}{2} \lambda^2,\\
\quad\\
\lambda f^{2,0}_T(t)|_{t=0}=\frac{1}{2} \lambda^2.
\end{cases}
$$
This solution is the density function of a mixture of an exponential and an Erlang random variable, and the associated distribution does not satisfy Equation \eqref{Eq_Tsum}.

\section{Main Results}\label{MainResults}
The LFDO and RFDO give us the ability to study a very general family of distributions that may find applications in various areas, e.g, queuing theory, risk theory and control theory. Although many of the available techniques for the analysis of the associated equations are numerical or asymptotic, the fractional differential approach still offers analytic insights to the related problems. In this section, we aim at accomplishing this with particular problems arising in risk theory. A special family of renewal risk models of the form \eqref{crm} will be considered, including the Erlang$(n)$ and fractional Poisson risk models. We will show that the ruin probabilities in these models solve fractional integro-differential equations involving the LFDO and RFDO operators.

Before moving on to the main result, we introduce a lemma that allows us to change the argument of our operators on a bivariate function under certain circumstances.

\begin{lemma}\label{bivari}
	For positive real numbers $ \alpha$, $r$ and  $c$, the following identity holds
	\begin{equation}\label{ROCV}
	\prescript{\alpha}{x} {\mathrm{R}}_\infty^r[f(x+cy)](x,y)=c^{-r}\prescript{\alpha c}{y} {\mathrm{R}}_\infty^r[f(x+cy)](x,y),
	\end{equation}
	where $x$ and $y$ are real numbers and $\prescript{\alpha}{x} {\mathrm{R}}_\infty^r$ is defined in Equation \eqref{RFDO}.
\end{lemma}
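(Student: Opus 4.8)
The plan is to unwind both sides through Definition \ref{RockOp} so that the claim reduces to an identity between two right Caputo derivatives taken in different variables, and then to establish that identity by a single change of variables in the defining integral. Writing $n=\lceil r\rceil$ and using the integral representation of the right Caputo derivative from Definition \ref{Def_Caputo}, the left-hand side becomes
$$\prescript{\alpha}{x}{\mathrm{R}}_\infty^r[f(x+cy)](x,y)=e^{\alpha x}\,\frac{(-1)^n}{\Gamma(n-r)}\int_x^\infty (s-x)^{n-r-1}\,\partial_s^n\!\left[e^{-\alpha s}f(s+cy)\right]\mathrm{d}s,$$
where the differentiation and integration are carried out in the first argument with $y$ held fixed. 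The analogous expansion of $\prescript{\alpha c}{y}{\mathrm{R}}_\infty^r[f(x+cy)](x,y)$ produces an integral in $y$ whose integrand involves $e^{-\alpha c u}f(x+cu)$, and my goal is to transport the first expression into the second.

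The core step is the substitution $s=x+c(u-y)$ in the $x$-integral, so that $s-x=c(u-y)$, $\mathrm{d}s=c\,\mathrm{d}u$, the lower limit $s=x$ maps to $u=y$, and the upper limit is preserved. Under this change $s+cy=x+cu$, while the exponential splits as $e^{-\alpha s}=e^{-\alpha x}e^{\alpha c y}e^{-\alpha c u}$, isolating the factor $e^{-\alpha c u}$ that belongs to the $y$-side together with the $x$- and $y$-dependent constants $e^{-\alpha x}e^{\alpha c y}$. The one genuinely delicate point is the $n$-th derivative: since $e^{-\alpha s}f(s+cy)=e^{-\alpha x}e^{\alpha c y}\,G(u(s))$ with $G(u)=e^{-\alpha c u}f(x+cu)$ and $u(s)=y+(s-x)/c$ affine of slope $1/c$, the chain rule yields $\partial_s^n[\,\cdot\,]=c^{-n}e^{-\alpha x}e^{\alpha c y}G^{(n)}(u)$. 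This is where I expect to have to be most careful, because the exponential weight in the Caputo derivative is attached to the \emph{differentiation variable} rather than to the combined argument $x+cy$, so the decomposition of $e^{-\alpha s}$ and the bookkeeping of the constant prefactors must be tracked precisely.

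Collecting the powers of $c$ then finishes the proof: the Jacobian contributes $c$, the kernel $(s-x)^{n-r-1}$ contributes $c^{\,n-r-1}$, and the chain rule contributes $c^{-n}$, for a net factor $c^{\,(n-r-1)-n+1}=c^{-r}$. After pulling out $e^{-\alpha x}e^{\alpha c y}$ and this $c^{-r}$, the surviving integral is exactly $\frac{(-1)^n}{\Gamma(n-r)}\int_y^\infty (u-y)^{n-r-1}G^{(n)}(u)\,\mathrm{d}u=\prescript{\mathrm{C}}{y}{\mathrm{D}}_\infty^r[G](y)$. Multiplying by the prefactor $e^{\alpha x}$ coming from the LFDO cancels $e^{-\alpha x}$ and leaves $c^{-r}e^{\alpha c y}\prescript{\mathrm{C}}{y}{\mathrm{D}}_\infty^r[G](y)$, which by Definition \ref{RockOp} is precisely $c^{-r}\prescript{\alpha c}{y}{\mathrm{R}}_\infty^r[f(x+cy)](x,y)$, as claimed. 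The only hypotheses used are that $f(x+c\,\cdot)$ lies in the domain of the right Caputo derivative so that the improper integrals converge and both the change of variables and differentiation under the integral are legitimate; this is subsumed in the ``appropriate functions'' caveat already inherited from Definition \ref{RockOp} and Proposition \ref{Prop_AdjointR}.
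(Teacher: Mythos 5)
Your proof is correct and takes essentially the same route as the paper's: expand the operator via the right Caputo integral representation, perform the affine substitution $u=y+(s-x)/c$, and collect the factors $c\cdot c^{\,n-r-1}\cdot c^{-n}=c^{-r}$ together with the splitting $e^{-\alpha s}=e^{-\alpha x}e^{\alpha c y}e^{-\alpha c u}$, exactly as in the paper (which uses the same substitution written as $s=\tfrac{1}{c}(t-x)+y$). The only cosmetic deviation is your $(-1)^n$ factor in the right Caputo derivative, which the paper's Definition \ref{Def_Caputo} omits; since you apply the same convention when expanding the left side and when recognizing the right side, it cancels and the identity is unaffected.
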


\begin{proof}
	We start from the left-hand side of Equation \eqref{ROCV}. By definition we have
	\begin{align*}
	&\prescript{\alpha}{x} {\mathrm{R}}_\infty^r[f(x+cy)](x,y)=e^{\alpha x} \frac{1}{\Gamma(n-r)}\int_x^\infty(t-x)^{n-r-1}\frac{\mathrm{d}^n}{\mathrm{d}t^n}\left[e^{-\alpha t}f(t+cy)\right]\,\mathrm{d}t.
	\end{align*}
	Letting $s=\frac{1}{c}(t-x)+y$ leads to
	\begin{align*}
	\frac{1}{\Gamma(n-r)}\int_y^\infty e^{\alpha cy}(s-y)^{n-r-1}c^{-r}\frac{\mathrm{d}^n}{\mathrm{d}y^n}\left[e^{-\alpha cs}f(cs+x)\right]\,\mathrm{d}s,
	\end{align*}
	which is the right-hand side of Equation \eqref{ROCV}.
\end{proof}

Now we are able to generalize the result from \cite{li2004ruin, albrecher2010algebraic, biard2014fractional} to a risk model with inter-arrival times of the form of \eqref{Eq_Tsum}.
The main result of this paper is the following:
\begin{theorem}\label{mainthm}
	Consider a renewal risk model
	\begin{equation*}
	R_{m,n}(t)=u+ct-\sum_{i=1}^{N_{m,n}(t)}X_i,\quad t>0,
	\end{equation*}
	where the inter-arrival times $T_k$ are assumed to be a finite sum of independent gamma random variables $Y_i\sim\Gamma(r_i,\lambda_{1,i})$ and Mittag-Leffler random variables $Z_j\sim \mathrm{ML}(\mu_j,\lambda_{2,j})$ as in \eqref{Eq_Tsum}. Then the ruin probability $\psi(u)$ under model $ R_{m,n} $, satisfies the following fractional integro-differential equation
	\begin{align}\label{maineq}
	&\mathcal{A}_{m,n}^{*}\left(c\frac{\mathrm{d}}{\mathrm{d}u}\right)[\psi](u)
	=\Lambda_{m,n}\left(\int_{0}^{u}\psi(u-y)\,\mathrm{d}F_X(y)+\int_{u}^{\infty}\,\mathrm{d}F_X(y)\right)
	\end{align}
	with the universal boundary condition $\lim_{u\rightarrow\infty}\psi(u)=0$. Here, the constant $ \Lambda_{m,n} $ is given by Equation \eqref{Def_Lambda} and $ \mathcal{A}_{m,n}^{*} $ is the formal adjoint of $ \mathcal{A}_{m,n} $ (see Equation \eqref{MLGamma}) and is given by
	\begin{equation}\label{Def_AmnStar}
	\mathcal{A}_{m,n}^{*}\left(c\frac{\mathrm{d}}{\mathrm{d}u}\right) \defeq
	\bigodot_{j=1}^{n}\left(c^{\mu_j}\prescript{\mathrm{C}}{u} {\mathrm{D}}_\infty^{\mu_j}+\lambda_{2,j}\right) \bigodot_{i=1}^m \left(c^{r_i}\prescript{\lambda_{1,i}/c}{u} {\mathrm{R}}_\infty^{r_i}\right).
	\end{equation} 
	
\end{theorem}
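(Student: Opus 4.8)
The plan is to start from the renewal integral equation \eqref{IEQ}, apply the adjoint operator $\mathcal{A}_{m,n}^{*}(c\,\mathrm{d}/\mathrm{d}u)$ to both sides, and then transfer it onto the waiting-time density $f_T^{m,n}$ by integration by parts, so that Theorem \ref{tMLGamma} annihilates the bulk term while the prescribed boundary conditions produce the constant $\Lambda_{m,n}$. Concretely, I would abbreviate the inner expression of \eqref{IEQ} by
\begin{equation*}
G(w)\defeq\int_0^{w}\psi(w-y)\,\mathrm{d}F_X(y)+\int_{w}^{\infty}\,\mathrm{d}F_X(y),
\end{equation*}
so that \eqref{IEQ} reads $\psi(u)=\int_0^\infty f_T^{m,n}(t)\,G(u+ct)\,\mathrm{d}t$. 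Applying $\mathcal{A}_{m,n}^{*}(c\,\mathrm{d}/\mathrm{d}u)$ and interchanging it with the integral (justified under integrability and decay hypotheses on $f_T^{m,n}$ and $\psi$), the task reduces to evaluating $\mathcal{A}_{m,n}^{*}(c\,\mathrm{d}/\mathrm{d}u)[G(u+ct)]$ inside the integral.

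The first technical step is to trade the $u$-derivatives for $t$-derivatives. Each factor of $\mathcal{A}_{m,n}^{*}$ is a scaled RFDO, and Lemma \ref{bivari} gives $c^{r_i}\,\prescript{\lambda_{1,i}/c}{u}{\mathrm{R}}_\infty^{r_i}[G(u+ct)]=\prescript{\lambda_{1,i}}{t}{\mathrm{R}}_\infty^{r_i}[G(u+ct)]$; taking $\alpha=0$ in the same lemma, so that the RFDO collapses to the right Caputo derivative as in \eqref{RFDO}, yields $c^{\mu_j}\,\prescript{\mathrm{C}}{u}{\mathrm{D}}_\infty^{\mu_j}[G(u+ct)]=\prescript{\mathrm{C}}{t}{\mathrm{D}}_\infty^{\mu_j}[G(u+ct)]$. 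A direct computation shows that each RFDO sends a function of $u+ct$ to a function of $u+ct$, so these identities may be iterated through the whole composition; consequently $\mathcal{A}_{m,n}^{*}(c\,\mathrm{d}/\mathrm{d}u)[G(u+ct)]$ equals the $t$-operator $\bigodot_{j=1}^{n}(\prescript{\mathrm{C}}{t}{\mathrm{D}}_\infty^{\mu_j}+\lambda_{2,j})\bigodot_{i=1}^m\prescript{\lambda_{1,i}}{t}{\mathrm{R}}_\infty^{r_i}$ applied to $G(u+ct)$. By Proposition \ref{Prop_AdjointR} and its Caputo specialisation, applied factor by factor, this $t$-operator coincides with the formal adjoint of $\mathcal{A}_{m,n}(\mathrm{d}/\mathrm{d}t)$.

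It then remains to integrate by parts in $t$. Invoking the full integration-by-parts formula of Proposition \ref{PropFIBP} across the composition transfers the adjoint operator onto $f_T^{m,n}$, producing $\int_0^\infty \mathcal{A}_{m,n}(\mathrm{d}/\mathrm{d}t)[f_T^{m,n}](t)\,G(u+ct)\,\mathrm{d}t$ together with boundary contributions at $t=0$ and $t=\infty$. The bulk integral vanishes by Theorem \ref{tMLGamma}, and the terms at $t=\infty$ vanish by the decay of $f_T^{m,n}$ and its fractional derivatives. Since $G(u+ct)|_{t=0}=G(u)$, and the boundary conditions of Theorem \ref{tMLGamma} are arranged so that all but one of the surviving boundary terms cancel with the survivor equal to $\Lambda_{m,n}$, one obtains $\mathcal{A}_{m,n}^{*}(c\,\mathrm{d}/\mathrm{d}u)[\psi](u)=\Lambda_{m,n}\,G(u)$, which is exactly \eqref{maineq}; the condition $\lim_{u\to\infty}\psi(u)=0$ is inherited directly from \eqref{IEQ}. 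I expect the main obstacle to be precisely this last bookkeeping: fractional integration by parts for a composition of $m+n$ operators of non-integer orders generates boundary terms of many intermediate fractional orders, and showing that they collapse to the single constant $\Lambda_{m,n}$ rather than leaving spurious contributions is the delicate heart of the argument, with the interchange of the non-local operator and the improper $t$-integral a secondary point requiring care.
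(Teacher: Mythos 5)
Your proposal follows essentially the same route as the paper's own proof: apply $\mathcal{A}_{m,n}^{*}$ to the renewal equation \eqref{IEQ}, trade $u$-operators for $t$-operators via Lemma \ref{bivari}, transfer the composition onto $f_T^{m,n}$ by fractional integration by parts (Propositions \ref{PropFIBP} and \ref{Prop_AdjointR}), and let Theorem \ref{tMLGamma} annihilate the bulk term while the boundary data produce $\Lambda_{m,n}$. The bookkeeping you flag as the delicate heart is exactly what the paper executes: each intermediate boundary term at $t=0$ is shown to vanish individually via the initial value theorem of Laplace transforms (and at $t=\infty$ by the structure of the right Caputo derivative), with only the last term surviving to give $\Lambda_{m,n}\,G(u)$.
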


\begin{proof}
	For a general renewal risk model, the ruin probability solves the renewal equation \eqref{IEQ}  (see \cite{feller2008introduction}). Denoting the terms in parentheses of \eqref{IEQ} by
	$$h(u+ct)=\int_{0}^{u+ct}\psi(u+ct-y)\,\mathrm{d}F_X(y)+\int_{u+ct}^{\infty}\,\mathrm{d}F_X(y),$$
	we now apply $\mathcal{A}_{m,n}^{*}(c\frac{\mathrm{d}}{\mathrm{d}u})$ on both sides of the renewal equation and use Lemma \ref{bivari} to obtain
	\begin{align*}
	&\mathcal{A}_{m,n}^{*}\left(c\frac{\mathrm{d}}{\mathrm{d}u}\right)[\psi](u)
	=\int_0^\infty f_T^{m,n}(t)  \mathcal{A}_{m,n}^{*}\left(\frac{\mathrm{d}}{\mathrm{d}t}\right)[h(u+ct)](u,t)\,\mathrm{d}t.
	\end{align*}
	The fractional integration by parts rule in Equation \eqref{FIBP} is applicable here,
	\begin{align*}
	&\,\,\,\,\,\,\,\int_0^\infty f^{m,n}_T(t) \mathcal{A}_{m,n}^{*}\left(\frac{\mathrm{d}}{\mathrm{d}t}\right)[h(u+ct)](u,t)\,\mathrm{d}t\\
	&=\int_0^\infty\left(\prescript{}{0} {\mathrm{D}}_t^{\mu_1}+\lambda_{2,1} \right)[f_T^{m,n}](t)\mathcal{A}_{m,n-1}^{*}\left(\frac{\mathrm{d}}{\mathrm{d}t}\right)[h(u+ct)](u,t)\,\mathrm{d}t+\\
	&\sum_{k=0}^{\lfloor \mu_{1}\rfloor}\left[(-1)^{\lfloor \mu_{1}\rfloor+1+k}\prescript{}{0}{\mathrm{D}}_t^{\mu_1+k-\lfloor \mu_1\rfloor-1}[f_T^{m,n}](t)
	\left.\mathcal{A}_{m,n-1}^{*}\left(\frac{\mathrm{d}}{\mathrm{d}t}\right)[h(u+ct)](u,t)\right|_0^\infty\right].
	\end{align*}
	The boundary condition term evaluated at $t=0$ could be computed by using the initial value theorem of Laplace transforms,
	\begin{align*}
	&\prescript{}{0}{\mathrm{I}}_t^{1-\mu_1}[f_T^{m,n}](0)
	=\lim_{s\rightarrow \infty}\left(s^{\mu_1}\prod_{j=1}^n \frac{\lambda_{2,j}}{s^{\mu_j}+\lambda_{2,j}}\prod_{i=1}^m \left(\frac{\lambda_{1,i}}{s+\lambda_{1,i}}\right)^{r_i}\right)=0.
	\end{align*}
	Another boundary condition term evaluated at $t=\infty$ also equals  zero due to the fact that the definition of the right Caputo fractional derivative is an integral from $t$ to $\infty$. Analogously, we are able to move the first $n$ operators $\bigodot_{j=1}^{n}(\prescript{\mathrm{C}}{t} {\mathrm{D}}_\infty^{\mu_j}+\lambda_{2,j})$ from function $h$ to $f_T^{m,n}$ with all boundary conditions vanishing, which leads to
	\begin{align*}
	&\mathcal{A}_{m,n}^{*}\left(c\frac{\mathrm{d}}{\mathrm{d}u}\right)[\psi](u)\\
	=& \int_0^\infty\bigodot_{j=1}^{n}\left(\prescript{}{0} {\mathrm{D}}_t^{\mu_j}+\lambda_{2,j}\right) [f_T^{m,n}](t) \bigodot_{i=1}^m \prescript{\lambda_{1,i}}{t} {\mathrm{R}}_\infty^{r_i}[h(u+ct)](u,t)\,\mathrm{d}t.
	\end{align*}
	Now we use the integration by parts formula in Proposition \ref{Prop_AdjointR} to take the first RFDO $\prescript{\lambda_{1,1}}{t} {\mathrm{R}}_\infty^{r_1}$ off of $h$. Furthermore it can be shown that its adjoint $\prescript{\lambda_{1,1}}{0} {\mathrm{R}}_t^{r_1}$ commutes with  $(\prescript{}{0} {\mathrm{D}}_t^{\mu_j}+\lambda_{2,j})$ for all $j=1,\dots, n$ when applied on the density function $f_T^{m,n}$. We therefore get the right-hand side equal to:
	
	\begin{align*}
	&\int_0^\infty\bigodot_{j=1}^{n}\left(\prescript{}{0} {\mathrm{D}}_t^{\mu_j}+\lambda_{2,j}\right) \prescript{\lambda_{1,1}}{0} {\mathrm{R}}_t^{r_1} [f_T^{m,n}](t)\bigodot_{i=2}^m \prescript{\lambda_{1,i}}{t} {\mathrm{R}}_\infty^{r_i}[h(u+ct)](u,t)\,\mathrm{d}t\\
	&+\sum_{k=0}^{\lfloor r_{1}\rfloor}\left[(-1)^{\lfloor r_{1}\rfloor+1+k} \bigodot_{i=2}^m \prescript{\lambda_{1,i}}{t} {\mathrm{R}}_\infty^{r_i}[h(u+ct)](u,t)\right.\\
	&\quad \quad \quad \left.\left. \bigodot_{j=1}^{n}\left(\prescript{}{0} {\mathrm{D}}_t^{\mu_j}+\lambda_{2,j}\right) \prescript{\lambda_{1,i}}{0} {\mathrm{R}}_t^{r_1+k-\lfloor r_{1}\rfloor-1} [f_T^{m,n}](t)\right]\right|_0^\infty.
	\end{align*}
	The boundary condition at $t=0$ can be computed by applying the initial value theorem
	\begin{align*}
	&\bigodot_{j=1}^{n}\left(\prescript{}{0} {\mathrm{D}}_t^{\mu_j}+\lambda_{2,j}\right) \prescript{\lambda_{1,1}}{0} {\mathrm{R}}_t^{r_1+k-\lfloor r_{1}\rfloor-1} [f_T^{m,n}](0)\\
	=&\prod_{j=1}^n \lambda_{2,j} \lim_{s\rightarrow \infty}\left(\frac{\lambda_{1,1}^{r_1} s}{(s+\lambda_{1,1})^{\lfloor r_{1}\rfloor+1-k}}\prod_{i=2}^m \left(\frac{\lambda_{1,i}}{s+\lambda_{1,i}}\right)^{r_i}\right.\\
	&-s\left.\sum_{l=0}^{k-1}(s+\lambda_{1,1})^l \left[ \prescript{}{0} {\mathrm{D}}_t^{r_1+k-\lfloor r_{1}\rfloor-l-2}\left(e^{\lambda_{1,1}}f_T^{m,0}(t)\right) \right]\Big |_{t=0}\right).
	\end{align*}
	We continue to iteratively use the initial value theorem on the terms $$s(s+\lambda_{1,1})^l \left[ \prescript{}{0} {\mathrm{D}}_t^{r_1+k-\lfloor r_{1}\rfloor-l-2}\left(e^{\lambda_{1,1} t}f_T^{m,0}(t)\right) \right]\Big |_{t=0},$$ until it eventually gives us
	\begin{align*}
	&s(s+\lambda_{1,1})^{\lfloor r_{1}\rfloor-1} \left[ \prescript{}{0} {\mathrm{I}}_t^{\lfloor r_{1}\rfloor+1-r_1}\left(e^{\lambda_{1,1} t}f_T^{m,0}(t)\right) \right]\Big |_{t=0}\\
	=&s(s+\lambda_{1,1})^{r_1-2}\prod_{i=1}^{m}\left(\frac{\lambda_{1,i}}{s}\right)^{r_i},
	\end{align*}
	which tends to zero when $s\rightarrow\infty$. The boundary condition term evaluated at $t=\infty$ yields zero since the right Caputo derivatives vanish at infinity. Analogously, we are able to move the rest of the operators $\bigodot_{i=1}^m \prescript{\lambda_{1,i}}{t} {\mathrm{R}}_\infty^{r_i}$ from the function $h$ to $f_T^{m,n}$ with all boundary conditions vanishing, which leads to
	\begin{align*}
	\mathcal{A}_{m,n}^{*}\left(c\frac{\mathrm{d}}{\mathrm{d}u}\right)[\psi](u) &=\int_0^\infty\mathcal{A}_{m,n}\left(\frac{\mathrm{d}}{\mathrm{d}t}\right) [f_T^{m,n}](t) [h(u+ct)](u,t)\,\mathrm{d}t\\
	&+\left(\left.[h(u+ct)](u,t) \mathcal{A}_{m-1,n}\left(\frac{\mathrm{d}}{\mathrm{d}t}\right) [f_T^{m,n}](t)\right|_{t=0}\right).
	\end{align*}
	Since the inter-arrival time density satisfies Equation \eqref{MLGamma}, the integral term of the above equation vanishes. The boundary conditions of $f_T^{m,n}$ ensure that the lower summand is, at $ t=0 $,
	$$
	h(u) \bigodot_{j=1}^{n}\left(\prescript{}{0} {\mathrm{D}}_t^{\mu_j}+\lambda_{2,j}\right) \prescript{\lambda_{1,n}}{0} {\mathrm{R}}_t^{r_n-1} \bigodot_{i=1}^{m-1}\prescript{\lambda_{1,i}}{0} {\mathrm{R}}_t^{r_i}[f_T^{m,n}](0)=\Lambda_{m,n} h(u)
	$$
	This completes the proof.
\end{proof}

\begin{corollary}
	The non-ruin probability $\phi(u) = 1-\psi(u)$ for the risk model in Theorem \ref{mainthm} satisfies the following fractional integro-differential equation
	\begin{align}\label{maineq2}
	&\mathcal{A}_{m,n}^{*}\left(c\frac{\mathrm{d}}{\mathrm{d}u}\right)[\phi](u)
	=\Lambda_{m,n}\left[\int_{0}^{u}\phi(u-y)\,\mathrm{d}F_X(y)\right]
	\end{align}
	with the universal boundary condition $\lim_{u\rightarrow\infty}\phi(u)=1$ (see Equation \eqref{Def_Lambda} and \eqref{Def_AmnStar} for the definitions of the constant $ \Lambda_{m,n} $ and the operator $ \mathcal{A}_{m,n}^{*}(c\frac{\mathrm{d}}{\mathrm{d}u}) $).
\end{corollary}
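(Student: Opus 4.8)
The plan is to obtain the corollary directly from Theorem \ref{mainthm} by substituting $\psi = 1-\phi$ and using the linearity of the adjoint operator $\mathcal{A}_{m,n}^{*}$. The boundary condition is immediate: since $\lim_{u\to\infty}\psi(u)=0$, we get $\lim_{u\to\infty}\phi(u)=1$. The substance of the argument lies entirely in rewriting the right-hand side of \eqref{maineq}, and the one genuinely new computation I need is the action of $\mathcal{A}_{m,n}^{*}$ on the constant function.

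First I would record that $\mathcal{A}_{m,n}^{*}[1]=\Lambda_{m,n}$. By \eqref{Def_AmnStar} the operator is a left-composition of factors of two kinds, $\left(c^{\mu_j}\prescript{\mathrm{C}}{u}{\mathrm{D}}_\infty^{\mu_j}+\lambda_{2,j}\right)$ and $\left(c^{r_i}\prescript{\lambda_{1,i}/c}{u}{\mathrm{R}}_\infty^{r_i}\right)$, so it suffices to evaluate each kind on a constant. The right Caputo derivative annihilates constants, since its definition involves an integer-order derivative $g^{(n)}$ with $n=\lceil\mu_j\rceil\geq 1$; hence each Caputo factor acts on a constant by multiplication by $\lambda_{2,j}$. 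For the RFDO factors, a short computation using $\prescript{\mathrm{C}}{u}{\mathrm{D}}_\infty^{r}[e^{-\alpha\,\cdot}](u)=\alpha^r e^{-\alpha u}$ (valid for $\alpha>0$) gives $\prescript{\alpha}{u}{\mathrm{R}}_\infty^{r}[1]=e^{\alpha u}\,\alpha^r e^{-\alpha u}=\alpha^r$, again a constant; with $\alpha=\lambda_{1,i}/c$ this contributes the scalar $c^{r_i}(\lambda_{1,i}/c)^{r_i}=\lambda_{1,i}^{r_i}$. Because every factor sends constants to constants, composing them telescopes to $\mathcal{A}_{m,n}^{*}[1]=\prod_i\lambda_{1,i}^{r_i}\prod_j\lambda_{2,j}=\Lambda_{m,n}$, by \eqref{Def_Lambda}.

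With this in hand, linearity yields $\mathcal{A}_{m,n}^{*}[\phi]=\mathcal{A}_{m,n}^{*}[1]-\mathcal{A}_{m,n}^{*}[\psi]=\Lambda_{m,n}-\mathcal{A}_{m,n}^{*}[\psi]$. Substituting the expression for $\mathcal{A}_{m,n}^{*}[\psi]$ from \eqref{maineq}, writing $\psi(u-y)=1-\phi(u-y)$, and using the total-mass identity $\int_0^u \mathrm{d}F_X(y)+\int_u^\infty \mathrm{d}F_X(y)=1$, I would simplify the renewal bracket to
\[
\int_0^u \psi(u-y)\,\mathrm{d}F_X(y)+\int_u^\infty \mathrm{d}F_X(y)=1-\int_0^u \phi(u-y)\,\mathrm{d}F_X(y),
\]
whence $\mathcal{A}_{m,n}^{*}[\phi](u)=\Lambda_{m,n}-\Lambda_{m,n}\bigl(1-\int_0^u\phi(u-y)\,\mathrm{d}F_X(y)\bigr)=\Lambda_{m,n}\int_0^u\phi(u-y)\,\mathrm{d}F_X(y)$, which is exactly \eqref{maineq2}.

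The only nonroutine ingredient is the identity $\mathcal{A}_{m,n}^{*}[1]=\Lambda_{m,n}$; everything else is linearity and elementary rearrangement of the renewal integral. The delicate point there is confirming that both the RFDO and the right Caputo operator map constants to constants, so that the composition collapses cleanly into a product of scalars rather than introducing cross terms; this rests on the explicit forms of the fractional derivatives recalled in Appendix \ref{bkfc}. I expect this verification to be the main obstacle, though it is a direct calculation rather than a conceptual difficulty.
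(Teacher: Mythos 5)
Your proposal is correct and matches the derivation the paper leaves implicit: the corollary is stated there without proof as an immediate consequence of Theorem \ref{mainthm}, obtained precisely by writing $\psi = 1-\phi$ and using linearity together with the identity $\mathcal{A}_{m,n}^{*}[1]=\Lambda_{m,n}$. Your verification of that identity --- each Caputo factor sends a constant to $\lambda_{2,j}$ times it, each scaled RFDO factor to $\lambda_{1,i}^{r_i}$ times it, via the eigenvalue relation of Proposition \ref{effd} --- is exactly the key fact needed, and the rest is the same elementary rearrangement of the renewal bracket using $\int_0^u \mathrm{d}F_X(y)+\int_u^\infty \mathrm{d}F_X(y)=1$.
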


Theorem \ref{mainthm} characterises a fractional integro-diferential equation satisfied by the ruin probability $ \psi $ for a large class of waiting time distributions. The solvability of this fractional integro-differential equation depends on the particular form of the claim size distribution function $ F_X $. 

We now restrict the rest of the analysis to claim sizes $X_i$ distributed as a sum of an arbitrary number of independent gamma random variables. The next theorem shows that under such assumption, Equation \eqref{maineq} can be written as a boundary value problem with only fractional derivatives. It is important to note that if the claim sizes include any Mittag-Leffler components, as it is the case of $T$ in Theorem \ref{mainthm}, we would have $\mathbb{E}[X_i] = \infty$ and ruin would happen with probability one since the net profit condition is violated.

\begin{theorem}\label{mainthm2}
	Consider the renewal risk model in Theorem \ref{mainthm}. Assume further that the claim sizes $X_i$ are each distributed as a sum of $l$ independent $\Gamma(s_k,\alpha_k)$ distributed random variables for some $ s_k,\alpha_k >0 $, $ k=1,\dots,l $ i.e., $f_X$ satisfies
	\begin{equation}\label{Def_Al}
	\mathcal{A}_{l}\left(\frac{\mathrm{d}}{\mathrm{d}u}\right)\left[f_{X}\right](u) \defeq \bigodot_{k=1}^l \prescript{\alpha_k}{0} {\mathrm{R}}_u^{s_k}\left[f_{X}\right](u)=0,
	\end{equation}
	with certain boundary conditions (see Theorem \ref{tMLGamma}). Let $ \mathcal{A}_{m,n}^*(c\frac{\mathrm{d}}{\mathrm{d}u}) $ and $ \Lambda_{m,n} $ be as defined in Equation \eqref{Def_AmnStar} and \eqref{Def_Lambda} respectively. Then the non-ruin probability $\phi(u)$ satisfies 
	\begin{align}\label{timeclaim} 
	&\mathcal{A}_{l}\left(\frac{\mathrm{d}}{\mathrm{d}u}\right)\mathcal{A}_{m,n}^*\left(c\frac{\mathrm{d}}{\mathrm{d}u}\right)[\phi](u)
	=\Lambda_{m,n} \prod_{k=1}^l \alpha_k^{s_k} 
	\phi(u)
	\end{align}
	with the universal boundary condition $\lim_{u\rightarrow\infty}\phi(u)=1$ and initial values
	\begin{equation}\label{timeclaimBC}
	\left(\prescript{\alpha_1}{0} {\mathrm{R}}_u^{s_1-k'}\bigodot_{k=2}^l \prescript{\alpha_k}{0} {\mathrm{R}}_u^{s_k}\bigodot_{j=1}^{n}\left(c^{\mu_j}\prescript{\mathrm{C}}{u} {\mathrm{D}}_\infty^{\mu_j}+\lambda_{2,j}\right) \bigodot_{i=1}^m \left(c^{r_i}\prescript{\lambda_{1,i}/c}{u} {\mathrm{R}}_\infty^{r_i}\right)\right)[\phi](0)=0,
	\end{equation}
	for $k'=1,\,\dots,\,\left\lceil \sum_{k=1}^l s_k \right\rceil-1.$ 
\end{theorem}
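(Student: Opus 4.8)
The plan is to build on the Corollary rather than on Theorem \ref{mainthm} directly. I would start from the non-ruin integro-differential equation \eqref{maineq2},
$$\mathcal{A}_{m,n}^{*}\left(c\frac{\mathrm{d}}{\mathrm{d}u}\right)[\phi](u)=\Lambda_{m,n}\int_0^u \phi(u-y)\,f_X(y)\,\mathrm{d}y,$$
(writing $\mathrm{d}F_X(y)=f_X(y)\,\mathrm{d}y$) and apply the purely fractional-differential operator $\mathcal{A}_{l}(\frac{\mathrm{d}}{\mathrm{d}u})=\bigodot_{k=1}^l \prescript{\alpha_k}{0}{\mathrm{R}}_u^{s_k}$ to both sides. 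The left-hand side then produces exactly the composite operator $\mathcal{A}_l\mathcal{A}_{m,n}^{*}$ appearing in \eqref{timeclaim}, so the whole content of the theorem reduces to evaluating $\mathcal{A}_l$ on the convolution $H(u)\defeq\int_0^u \phi(u-y)\,f_X(y)\,\mathrm{d}y$ and establishing the single identity
$$\mathcal{A}_l\left(\frac{\mathrm{d}}{\mathrm{d}u}\right)[H](u)=\prod_{k=1}^l \alpha_k^{s_k}\,\phi(u).$$

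The cleanest route to this identity is via the Laplace transform. Each LFDO $\prescript{\alpha_k}{0}{\mathrm{R}}_u^{s_k}$ carries the Laplace symbol $(s+\alpha_k)^{s_k}$ up to boundary contributions, so that $\widehat{\mathcal{A}_l[H]}(s)=\prod_{k}(s+\alpha_k)^{s_k}\,\hat\phi(s)\,\hat f_X(s)$ modulo boundary terms. Since $f_X$ is the density of a sum of independent $\Gamma(s_k,\alpha_k)$ variables, $\hat f_X(s)=\prod_k(\alpha_k/(s+\alpha_k))^{s_k}$, whence $\prod_k(s+\alpha_k)^{s_k}\hat f_X(s)=\prod_k\alpha_k^{s_k}$ is constant and the transform collapses to $\prod_k\alpha_k^{s_k}\,\hat\phi(s)$; inverting yields the displayed identity. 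Equivalently, and in the spirit of the proof of Theorem \ref{mainthm}, I would transfer each $\prescript{\alpha_k}{0}{\mathrm{R}}_u^{s_k}$ onto $f_X$ inside the convolution via the fractional integration-by-parts rule \eqref{FIBP} (Proposition \ref{Prop_AdjointR}): the resulting interior integral carries $\mathcal{A}_l(\frac{\mathrm{d}}{\mathrm{d}u})[f_X]$, which vanishes identically by hypothesis \eqref{Def_Al}, while the top surface term at $y=0$ collects, through the boundary conditions of $f_X$ supplied by Theorem \ref{tMLGamma}, to precisely $\prod_k\alpha_k^{s_k}\,\phi(u)$.

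It remains to verify that all the lower-order boundary contributions vanish, and this is simultaneously the derivation of the initial values \eqref{timeclaimBC}. Substituting $\mathcal{A}_{m,n}^{*}[\phi]=\Lambda_{m,n}H$ into the left-hand sides of \eqref{timeclaimBC}, each such condition becomes a partial operator $\prescript{\alpha_1}{0}{\mathrm{R}}_u^{s_1-k'}\bigodot_{k=2}^l\prescript{\alpha_k}{0}{\mathrm{R}}_u^{s_k}$ applied to $H$ and evaluated at $u=0$. These are exactly the boundary terms that surface when transferring $\mathcal{A}_l$ across the convolution, and they vanish by the initial value theorem of Laplace transforms: $H(0)=0$ and the relevant fractional integrals of $H=\phi\ast f_X$ vanish at the origin because $f_X$ vanishes to sufficient order there, exactly as the boundary terms were shown to vanish in the proof of Theorem \ref{mainthm}. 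The universal condition $\lim_{u\to\infty}\phi(u)=1$ is inherited unchanged from the Corollary.

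The main obstacle I anticipate is the rigorous bookkeeping of the fractional boundary terms: in the integer-order case the identity for $\mathcal{A}_l[H]$ is just the Leibniz rule applied to a convolution, but fractionally one must justify the transfer through \eqref{FIBP}, control the non-integer-order surface terms via the initial value theorem, and confirm that only the single constant $\prod_k\alpha_k^{s_k}$ survives to multiply $\phi(u)$ while every lower-order term drops out — the latter being precisely the list of initial conditions \eqref{timeclaimBC}.
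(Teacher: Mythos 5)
Your proposal is correct in substance and follows essentially the same route as the paper: apply $\mathcal{A}_{l}(\frac{\mathrm{d}}{\mathrm{d}u})$ to the non-ruin equation \eqref{maineq2}, collapse $\mathcal{A}_{l}[\phi * f_X]$ to $\prod_{k=1}^l \alpha_k^{s_k}\,\phi$, and obtain \eqref{timeclaimBC} by showing that the partial compositions applied to $\phi * f_X$ vanish at $u=0$. One citation needs fixing, however: the tool that moves the LFDOs across the convolution is \emph{not} the integration-by-parts/adjoint formula \eqref{FIBP} of Proposition \ref{Prop_AdjointR} that you invoke. That formula trades a left operator acting on one factor for a \emph{right} operator $\prescript{\alpha}{x}{\mathrm{R}}_\infty^r$ acting on the other over a fixed interval; applied to the convolution it would not leave the quantity $\mathcal{A}_{l}[f_X]=0$ inside the integral. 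The correct tool—and the one the paper uses—is the Riemann--Liouville derivative-of-a-convolution rule, Proposition \ref{rldci}: writing $e^{\alpha_1 u}(f_1*\phi)(u)=\bigl[(e^{\alpha_1\,\cdot}f_1)*(e^{\alpha_1\,\cdot}\phi)\bigr](u)$, that rule produces an interior term carrying $\prescript{\alpha_1}{0}{\mathrm{R}}_u^{s_1}[f_1]=0$ plus the surface term $e^{-\alpha_1 u}\lim_{t\to 0+}e^{\alpha_1(u-t)}\phi(u-t)\,\prescript{}{0}{\mathrm{D}}_t^{s_1-1}\bigl[e^{\alpha_1 t}f_1(t)\bigr]=\alpha_1^{s_1}\phi(u)$, and iterating over $k$ gives the identity; this convolution identity is also precisely the ``extra statement'' proved inductively in Appendix \ref{appB} for Theorem \ref{tMLGamma}, which the paper's proof quietly reuses. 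Your Laplace-transform verification of the same identity is a legitimate alternative, since the lower-order boundary terms in Proposition \ref{ltfd} vanish ($f_X$ vanishes to order $\sum_k s_k-1$ at the origin, as you note), and your treatment of the initial conditions \eqref{timeclaimBC} matches the paper's computation.
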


\begin{proof}
	Taking the operator $\mathcal{A}_{l}(\frac{\mathrm{d}}{\mathrm{d}y})$ on both sides of Equation \eqref{maineq2} leads to
	\begin{align*}
	&\mathcal{A}_{l}\left(\frac{\mathrm{d}}{\mathrm{d}u}\right)\mathcal{A}_{m,n}^*\left(c\frac{\mathrm{d}}{\mathrm{d}u}\right)[\phi](u)
	= \Lambda_{m,n} \mathcal{A}_{l}\left(\frac{\mathrm{d}}{\mathrm{d}u}\right)\left(\int_{0}^{u}\phi(u-y)f_X(y)\,\mathrm{d}y\right).
	\end{align*}
	Recall from Theorem \ref{tMLGamma} that the non-ruin probability function $\phi(u)$ is supported on $[0,\infty)$, so the identity
	\begin{equation*}
	\mathcal{A}_{l}\left(\frac{\mathrm{d}}{\mathrm{d}u}\right)\left(\int_{0}^{u}\phi(u-y)f_X(y)\,\mathrm{d}y\right)=\bigodot_{k=1}^l \prescript{\alpha_k}{0} {\mathrm{R}}_u^{s_k}[\phi*f_{X}](u)=\prod_{k=1}^l \alpha^{s_k} \phi(u)
	\end{equation*}
	holds in this case, giving Equation \eqref{timeclaim}. For the boundary conditions, we compute
	\begin{align*}
	&\left(\prescript{\alpha_1}{0} {\mathrm{R}}_u^{s_1-k'}\bigodot_{k=2}^l \prescript{\alpha_k}{0} {\mathrm{R}}_u^{s_k}\bigodot_{j=1}^{n}\left(c^{\mu_j}\prescript{\mathrm{C}}{u} {\mathrm{D}}_\infty^{\mu_j}+\lambda_{2,j}\right) \bigodot_{i=1}^m \left(c^{r_i}\prescript{\lambda_{1,i}/c}{u} {\mathrm{R}}_\infty^{r_i}\right)\right)[\phi](0)\\
	=&\Lambda_{m,n}\prod_{k=2}^l\alpha_k^{s_k}\prescript{\alpha_1}{0} {\mathrm{R}}_u^{s_1-k'}\left(\phi(u)*f_1(u)\left.\right)\,\right|_{u=0},
	\end{align*}
	where 
	$f_1$ stands for the density function of $\Gamma(s_1,\alpha_1)$. Using Equation \eqref{rldci}, one has
	\begin{align*}
	&\left.\Lambda_{m,n}\prod_{k=2}^l\alpha_k^{s_k} e^{-\alpha_1 u}\prescript{}{0} {\mathrm{D}}_u^{s_1-k'}\left[\int_0^u e^{\alpha_1(u-y)}\phi(u-y) e^{\alpha_1 y}f_1(y)\,\mathrm{d}y\right]\right|_{u=0}\\
	=&\Lambda_{m,n}\prod_{k=2}^l\alpha_k^{s_k}\left(\left.e^{-\alpha_1 u}\left[e^{\alpha_1 u}\phi(u)*\frac{\alpha_1^{s_1}}{\Gamma(k')}u^{k'-1}\right]\right|_{u=0}+\left.\phi(0)\frac{\alpha_1^{s_1}}{\Gamma(k'+1)}y^{k'}\right|_{y=0}\right),
	\end{align*}
	which equals to zero for $k'=1,\,\dots,\,\lceil \sum_{k=1}^l s_k \rceil-1$. This completes the proof.
\end{proof}

\subsection{The characteristic equation method}\label{subsec_cem}

Our next goal is solving the fractional differential boundary value problem in Theorem \ref{mainthm2} via a characteristic equation starting from the ansatz $ \phi(u)  = e^{-z u}$. The main technical difficulty in dealing with the full generality of Theorem \ref{mainthm2} arises from the fact that the operators in Equation \eqref{timeclaim} combine two different types of differential operators: $\mathcal{A}_{m,n}^*(c\frac{\mathrm{d}}{\mathrm{d}u}) $ is a composition of right Caputo fractional derivatives, while the operators in $ \mathcal{A}_{l}(\frac{\mathrm{d}}{\mathrm{d}u}) $ are LFDOs which are ultimately defined in terms of left Riemann-Liouville fractional derivatives (see Equation \eqref{Def_Al}, \eqref{Def_AmnStar} and \eqref{RockOp}). The proposed ansatz is an eigenfunction only for the operators in $\mathcal{A}_{m,n}^*(c\frac{\mathrm{d}}{\mathrm{d}u}) $ (see Proposition \ref{efrlfd} and Proposition \ref{effd}). When restricting to the case of $ s_k \in \mathbb{N}$ , $ k=1,\dots,l $, we simplify things greatly, since 
\begin{equation*}\label{key}
\mathcal{A}_{l}\left(\frac{\mathrm{d}}{\mathrm{d}u}\right) = \bigodot_{k=1}^l \prescript{\alpha_k}{0} {\mathrm{R}}_u^{s_k}=\bigodot_{k=1}^l\left(\frac{\mathrm{d}}{\mathrm{d}u}+\alpha_k\right)^{s_k}
\end{equation*}
reduces to a combination of ordinary differential operators. 

Note that assuming  $ s_k \in \mathbb{N}$ , $ k=1,\dots,l $ in Equation \eqref{Def_Al} is equivalent to assuming that the claim sizes  $ X_i $ are each distributed as a sum of $ l $ independent Erlang random variables. Moreover, under this case, the operator $  \mathcal{A}_{l}(\frac{\mathrm{d}}{\mathrm{d}u}) \mathcal{A}_{m,n}^*(c\frac{\mathrm{d}}{\mathrm{d}u}) $ on the left hand side of Equation \eqref{timeclaim} is a composition of right Caputo fractional derivatives. Furthermore, with the ansatz $ \phi(u) = e^{-z u} $, Equation \eqref{timeclaim} yields the following characteristic equation for $ z $:
\begin{equation}\label{cetimeclaim}
\prod_{k=1}^l (-z+\alpha_k)^{s_k}\prod_{j=1}^n(c^{\mu_j}z^{\mu_j}+\lambda_{2,j})\prod_{i=1}^m(cz+\lambda_{1,i})^{r_i}=\Lambda_{m,n}  \prod_{k=1}^l \alpha_k^{s_k}.
\end{equation}
Note that from the definition of $ \Lambda_{m,n} $ in Equation \eqref{Def_Lambda}, $ z=0 $ is always a root of \eqref{cetimeclaim}. If Equation \eqref{cetimeclaim} has $N>0$ additional distinct complex roots with positive real part, say $z_1,\dots,z_N$, then the non-ruin probability $ \phi $ that solves Equation \eqref{timeclaim} is
\begin{equation}\label{Eq_solPhi}
\phi(u) = 1+ \sum\limits_{p=1}^N K_p e^{-z_p u}, 
\end{equation}
where the constants $K_p$, $ p=1,\dots,N $ are to be determined from the boundary conditions in Equation \eqref{timeclaimBC}, which are characterized in the following result.

\begin{proposition}
	Suppose $ s_k \in \mathbb{N}$, $ k=1,\dots,l $, in Theorem \ref{mainthm2}. The number of initial-value boundary conditions of $\phi(u)$ is $N=\sum_{k=1}^l s_k$ and they are given explicitly by:
	\begin{equation}\label{maineq2bc}
	\bigodot_{k=1}^l \prescript{\alpha_k}{0} {\mathrm{R}}_u^{s_{p,k}}\bigodot_{j=1}^{n}\left(c^{\mu_j}\prescript{\mathrm{C}}{u} {\mathrm{D}}_\infty^{\mu_j}+\lambda_{2,j}\right) \bigodot_{i=1}^m \left(c^{r_i}\prescript{\lambda_{1,i}/c}{u} {\mathrm{R}}_\infty^{r_i}\right)[\phi](0)=0, \; p=1,\dots,N
	\end{equation}
	where the values of $s_{p,k}$ are to be computed as follows: let
	\begin{equation*}
	L(p)=\inf\left\{\ell\in\mathbb{N}: \sum\limits_{k=1}^\ell s_k\leqslant p\right\}, \quad p=1,\dots,N
	\end{equation*}
	and define
	\[
	s_{p,k}=
	\begin{cases}
	s_k,\quad \text{if } k<L(p),\\[10pt]
	p-\sum\limits_{i=1}^{L(p)-1}s_i-1,\quad \text{if } k=L(p),\\
	\quad\vdots\\
	0,\quad \text{if } k>L(p).
	\end{cases}
	\]
\end{proposition}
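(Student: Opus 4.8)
The plan is to recover the boundary conditions by comparing the differentiated equation \eqref{timeclaim} with the original integro-differential equation \eqref{maineq2}, and to count them through the order of the claim operator. Set
\[
G(u):=\mathcal{A}_{m,n}^{*}\left(c\frac{\mathrm{d}}{\mathrm{d}u}\right)[\phi](u)-\Lambda_{m,n}\int_{0}^{u}\phi(u-y)\,\mathrm{d}F_X(y),
\]
so that $\phi$ satisfies \eqref{maineq2} precisely when $G\equiv 0$. Because $s_k\in\mathbb{N}$, the operator $\mathcal{A}_{l}(\frac{\mathrm{d}}{\mathrm{d}u})=\bigodot_{k=1}^{l}\left(\frac{\mathrm{d}}{\mathrm{d}u}+\alpha_k\right)^{s_k}$ is an ordinary linear differential operator of order $N=\sum_{k=1}^{l}s_k$, whose kernel is $N$-dimensional, spanned by $\{u^{j}e^{-\alpha_k u}\}$, and consists of functions decaying at $+\infty$ (distinctness of the $\alpha_k$ is what makes the dimension exactly $N$). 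Applying $\mathcal{A}_{l}(\frac{\mathrm{d}}{\mathrm{d}u})$ to $G$ and invoking the convolution identity already used in the proof of Theorem \ref{mainthm2}, one sees that $\phi$ solves \eqref{timeclaim} if and only if $\mathcal{A}_{l}(\frac{\mathrm{d}}{\mathrm{d}u})[G]\equiv 0$, i.e. $G\in\ker\mathcal{A}_{l}$. The problem therefore reduces to singling out the element $G\equiv 0$ of this $N$-dimensional space, which needs exactly $N$ scalar conditions.

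Next I would introduce, for $p=1,\dots,N$, the partial operator $\mathcal{B}_p:=\bigodot_{k=1}^{l}\prescript{\alpha_k}{0}{\mathrm{R}}_u^{s_{p,k}}$. A direct cancellation in the definition of $s_{p,k}$ gives $\sum_{k=1}^{l}s_{p,k}=p-1$, so (all $s_{p,k}$ being integers) $\mathcal{B}_p=\bigodot_{k=1}^{l}(\frac{\mathrm{d}}{\mathrm{d}u}+\alpha_k)^{s_{p,k}}$ is an ordinary differential operator of order $p-1$, and as $p$ runs from $1$ to $N$ these orders exhaust $0,1,\dots,N-1$. Two facts then identify the boundary conditions. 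First, the convolution contributes nothing: since $F_X$ is the law of a sum of gamma variables of integer shapes with $\sum_k s_k=N$, its density behaves like a constant times $u^{N-1}$ near the origin, so $\int_{0}^{u}\phi(u-y)\,\mathrm{d}F_X(y)$ is flat to order $N$ at $u=0$, whence $\mathcal{B}_p$ applied to it vanishes at $u=0$ for every $p\leqslant N$, exactly as in the boundary computation carried out via \eqref{rldci} in Theorem \ref{mainthm2}. Hence $\mathcal{B}_p[G](0)=\mathcal{B}_p\,\mathcal{A}_{m,n}^{*}(c\frac{\mathrm{d}}{\mathrm{d}u})[\phi](0)$, and setting this to zero is precisely condition \eqref{maineq2bc}. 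Second, the top-order part of the order-$(p-1)$ operator $\mathcal{B}_p$ is $\mathrm{d}^{p-1}/\mathrm{d}u^{p-1}$, so the map $G\mapsto(\mathcal{B}_1[G](0),\dots,\mathcal{B}_N[G](0))$ is related to the Cauchy data $G\mapsto(G(0),\dots,G^{(N-1)}(0))$ by a lower-triangular, unipotent, hence invertible transformation.

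Putting the pieces together, the $N$ conditions \eqref{maineq2bc} are equivalent to $G(0)=G'(0)=\dots=G^{(N-1)}(0)=0$; since $G$ is a smooth solution of the order-$N$ equation $\mathcal{A}_{l}[G]=0$, these vanishing Cauchy data force $G\equiv 0$, i.e. \eqref{maineq2}, while conversely \eqref{maineq2} yields all of \eqref{maineq2bc} by the same boundary computation. This establishes both assertions: exactly $N=\sum_{k=1}^{l}s_k$ initial-value conditions are required, matching the $N$ free coefficients $K_p$ in \eqref{Eq_solPhi}, and they are given by \eqref{maineq2bc}. The step I expect to be the main obstacle is establishing the vanishing of the convolution boundary terms uniformly over \emph{all} partial operators $\mathcal{B}_p$ (not merely the first-coordinate decrements of Theorem \ref{mainthm2}), which hinges on the precise order-$N$ flatness of the convolution at the origin together with the fractional Leibniz rule; a secondary subtlety is the consistency of the count, namely that the number $N$ of admissible roots of \eqref{cetimeclaim} with positive real part indeed equals $\sum_{k=1}^{l}s_k$, so that the linear system determining the $K_p$ is square and nonsingular.
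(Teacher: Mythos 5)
Your proposal is correct, but it takes a genuinely different and in one respect stronger route than the paper's. The paper proves only necessity: assuming \eqref{maineq2}, it substitutes $\mathcal{A}_{m,n}^{*}(c\frac{\mathrm{d}}{\mathrm{d}u})[\phi]=\Lambda_{m,n}\,[\phi*f_X]$ into the left-hand side of \eqref{maineq2bc}, peels off the gamma factors one at a time (each full operator $\prescript{\alpha_k}{0}{\mathrm{R}}_u^{s_k}$, $k<L(p)$, consuming the density $f_k$ and contributing $\alpha_k^{s_k}$), and then shows via Proposition \ref{rldci} that the remaining partial operator $\prescript{\alpha_{L(p)}}{0}{\mathrm{R}}_u^{s_{p,L(p)}}$ applied to $\Phi*f_{L(p)}$ vanishes at $u=0$, because $s_{p,L(p)}<s_{L(p)}$ makes the relevant derivative of $f_{L(p)}$ vanish at the origin. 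Your necessity step rests on the same core fact---flatness of the claim-size density at the origin---but organizes it globally ($f_X\sim Cu^{N-1}$, hence $\mathcal{B}_p[\phi*f_X](0)=0$ for every order $p-1\leqslant N-1$) rather than factor by factor. What you add, and the paper omits, is the sufficiency direction: by passing to the defect $G$, observing that $\mathcal{A}_l[G]\equiv 0$ is exactly \eqref{timeclaim}, and noting that the vector $(\mathcal{B}_1[G](0),\dots,\mathcal{B}_N[G](0))$ is a unipotent lower-triangular image of the Cauchy data $(G(0),\dots,G^{(N-1)}(0))$, you conclude that \eqref{timeclaim} together with the $N$ conditions \eqref{maineq2bc} is \emph{equivalent} to \eqref{maineq2}. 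This actually justifies the assertion that the number of initial-value conditions is exactly $N=\sum_k s_k$ (matching the $N$ unknowns $K_p$ in \eqref{Eq_solPhi}), which the paper's computation alone does not establish. Two small corrections: the kernel of $\mathcal{A}_l(\frac{\mathrm{d}}{\mathrm{d}u})$ has dimension $N$ whether or not the $\alpha_k$ are distinct (distinctness changes only the basis, not the count), so your parenthetical is unnecessary; and your reading of $L(p)$ as the smallest $\ell$ with $\sum_{k=1}^{\ell}s_k\geqslant p$, which is what gives $\sum_{k}s_{p,k}=p-1$, is the intended one---the inequality printed in the statement is a typo.
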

\begin{proof}
	We consider the $p$-th boundary condition
	\begin{align*}
	&\bigodot_{k=1}^l \prescript{\alpha_k}{0} {\mathrm{R}}_u^{s_{p,k}}\mathcal{A}_{m,n}^*\left(c\frac{\mathrm{d}}{\mathrm{d}u}\right)[\phi](0)\\
	=&\Lambda_{m,n}\prod_{k=1}^{L(p)-1}\alpha_k^{s_k}  \prescript{\alpha_{L(p)}}{0} {\mathrm{R}}_u^{s_{p,L(p)}}\left[\phi*f_{L(p)}*f_{L(p)+}\right](0),
	\end{align*}
	where $f_{L(p)}$ stands for the density function of a $\Gamma(s_{L(p)},\alpha_{L(p)})$ random variable and $f_{L(p)+}$ for the density function of a sum of random varibales with distributions $\Gamma(s_{k},\alpha_k),$ $k = L(p)+1,\dots, L$. Let $\Phi=\phi*f_{L(p)+}$ and apply Proposition \ref{rldci} to compute
	\begin{align*}
	\prescript{\alpha_{L(p)}}{0} {\mathrm{R}}_u^{s_{p,L(p)}}&\left[\Phi*f_{L(p)}\right](u)
	=\Phi(u)\left.\prescript{}{0} {\mathrm{D}}_y^{s_{p,L(p)-1}}\left[e^{\alpha_{L(p)} y}f_{L(p)}(y) \right]\right |_{y=0}\\
	&+e^{-\alpha_{L(p)}u}\left[e^{\alpha_{L(p)}(u)}\Phi(u)* \prescript{}{0} {\mathrm{D}}_u^{s_{p,L(p)}} e^{\alpha_{L(p)} u}f_{L(p)}(u)\right].
	\end{align*}
	Note that $s_{p,L(p)-1}<s_{L(p)}$ and we have
	\begin{align*}
	&\prescript{\alpha_{L(p)}}{0} {\mathrm{R}}_u^{s_{p,L(p)}}\left[\Phi*f_{L(p)}\right](0)
	=\left.\int_0^u \Phi(u-y) \prescript{\alpha_{L(p)}}{0} {\mathrm{R}}_y^{s_{p,L(p)}}f_{L(p)}(y)\,\mathrm{d}y\,\right |_{u=0}=0.
	\end{align*}
	Since this holds for all $1\leqslant p\leqslant N$, we complete the proof.
\end{proof}

Substituting the expression in Equation \eqref{Eq_solPhi} for $ \phi(u) $ into the boundary conditions \eqref{maineq2bc} yields explicit linear equations for the unknown constants $ K_p $, $ p=1,\dots,N $. First, denote
\begin{equation*}\label{Def_Delta}
\Delta_p \defeq \prod\limits_{j=1}^n(c^{\mu_j}z_p^{\mu_j}+\lambda_{2,j})\prod\limits_{i=1}^m(cz_p+\lambda_{1,i})^{r_i}, \quad p =1,\dots,N.
\end{equation*}
Then, the constants $ K_p $, $ p=1,\dots,N $ in \eqref{Eq_solPhi} satisfy 
\begin{equation*}\label{Eq_Kp}
\begin{cases}
\mathlarger{\Lambda_{m,n}+\sum\limits_{p=1}^N\Delta_p K_p=0}\\
\mathlarger{\alpha_1\Lambda_{m,n}+\Delta\sum\limits_{p=1}^N\left(-z_p+\alpha_1\right) K_p=0}\\
\mathlarger{\quad\cdots}\\
\mathlarger{\alpha_1^{s_1}\Lambda_{m,n}+\sum\limits_{p=1}^N\Delta_p\left(-z_p+\alpha_1\right)^{s_1} K_p=0}\\
\mathlarger{\alpha_1^{s_1}\alpha_2\Lambda_{m,n}+\sum\limits_{p=1}^N\Delta_p\left(-z_p+\alpha_1\right)^{s_1}\left(-z_p+\alpha_2\right) K_p=0}\\
\mathlarger{\quad\cdots}\\
\mathlarger{\alpha_1^{s_1}\alpha_2^{s_2}\Lambda_{m,n}+\sum\limits_{p=1}^N\Delta_p\left(-z_p+\alpha_1\right)^{s_1}\left(-z_p+\alpha_2\right)^{s_2} K_p=0}\\ 
\mathlarger{\quad\cdots}\\
\mathlarger{\prod\limits_{k=1}^{l-1} \alpha_k^{s_k}\alpha_l^{s_l-1}\Lambda_{m,n}+\sum\limits_{p=1}^N\Delta_p\prod\limits_{k=1}^{l-1}\left(-z_p+\alpha_k\right)^{s_k}\left(-z_p+\alpha_l\right)^{s_l-1} K_p=0.}
\end{cases}
\end{equation*}

\section{Explicit Expressions for Ruin Probabilities in Gamma-time and Fractional Poisson Risk Models}\label{2exps}
The class of models considered in Theorem \ref{mainthm} is very general. In this section, we thus focus on two specific models which might be of interest to applications, and where explicit forms of ruin (non-ruin) probabilities can be derived.

\begin{remark} 
	It has been shown \cite{AsmAlb2010} that for any renewal risk model, the ruin probability always has an exponential form when the claim distribution is exponential. However, the fractional differential equation approach bridges a solid connection between classical risk model and a class of renewal models, which might be applied in a more sophisticated model. 
\end{remark}

\subsection{Gamma-time Risk Model}\label{gammatimerm}
A gamma-time risk model, describes the reserve process $R_r(t)$ of an insurance company by replacing the Poisson process $N(t)$ in the classical model \eqref{crm} with a renewal counting process $N_r(t)$ with $\Gamma(r,\lambda_1)$ distributed waiting times. This is a natural extension of Erlang$(n)$ risk model consiered by \cite{li2004ruin}.

Being a special case of Theorem \ref{mainthm}, the equation for ruin probability $\psi_r(u)$ in gamma-time risk model is
\begin{equation*}
c^r  e^{\frac{\lambda_1}{c}u}\prescript{\mathrm{C}}{u} {\mathrm{D}}_\infty^{r}\left(e^{-\frac{\lambda_1}{c}u}\psi_r(u)\right)=\lambda_1^r \left(\int_{0}^{u}\psi_r(u-y)\,\mathrm{d}F_X(y)+\int_u^\infty\,\mathrm{d}F_X(y)\right).
\end{equation*}
When claim sizes in this model have rational Laplace transforms, one could use the characteristic equation method mentioned in Section \ref{subsec_cem} to derive explicit ruin probabilities.

\begin{example}\label{Ex_GammaRM}
	In the gamma-time risk model with $\Gamma(r,\lambda_1)$ distributed inter-arrival times and Exp$(\alpha)$ distributed claim sizes, the ruin probability equals to
	\begin{equation}
	\label{grmec}
	\psi_r(u)=\left(\frac{\lambda_1}{cx_2}\right)^re^{-\left(x_2-\frac{\lambda_1}{c}\right) u},\quad u>0,
	\end{equation}
	where $x_2>\frac{\lambda_1}{c}$ is the larger root of equation
	\begin{equation*}
	\label{cegtec}
	c^r x^r\left[x-\left(\frac{\lambda_1}{c}+\alpha\right)\right]+\alpha\lambda_1^r=0.
	\end{equation*}
\end{example}

\begin{remark}
	Let $s=x_2-\frac{\lambda_1}{c}$ in Equation \eqref{grmec}, one has
	\begin{align*}
	&\left(M_X(s)M_T(-cs)\right)^{-1}-1=\left(1-\frac{s}{\alpha}\right)\left(1+\frac{cs}{\lambda_1}\right)^{r}-1\\
	=&\frac{c^r}{\lambda_1^r\alpha}\left[\left(\alpha+\frac{\lambda_1}{c}-x_2\right)x_2^r-\frac{\lambda_1^r}{c^r}\right]\\
	=&\frac{-1}{\lambda_1^r\alpha}\left[c^r x_2^{r+1}-\left(c^{r-1}\lambda_1+\alpha c^r\right)x_2^r+\alpha\lambda_1^r\right]=0,
	\end{align*}
	where $M_X$ and $M_T$ are moment generating functions of claim sizes and inter-arrival times. This means that $x_2-\frac{\lambda_1}{c}$ is the unique positive solution $\gamma$ of the Lundberg's fundamental equation. This finding coincides with the result from \cite{AsmAlb2010} for renewal risk models with exponential claims.
\end{remark}

In order to compare the classical compound Poisson with a gamma-time risk model, in Figure \ref{fig:test}a we show numerically computed ruin probabilities in the case of Example \ref{Ex_GammaRM} with different combinations of $ r $ and $ \lambda_1 $ when the mean claim inter-arrival time is fixed to $r/\lambda_1=1$.


\begin{figure}[H]
	\begin{center}
		\subfloat[]      {
			\includegraphics[width=6cm]{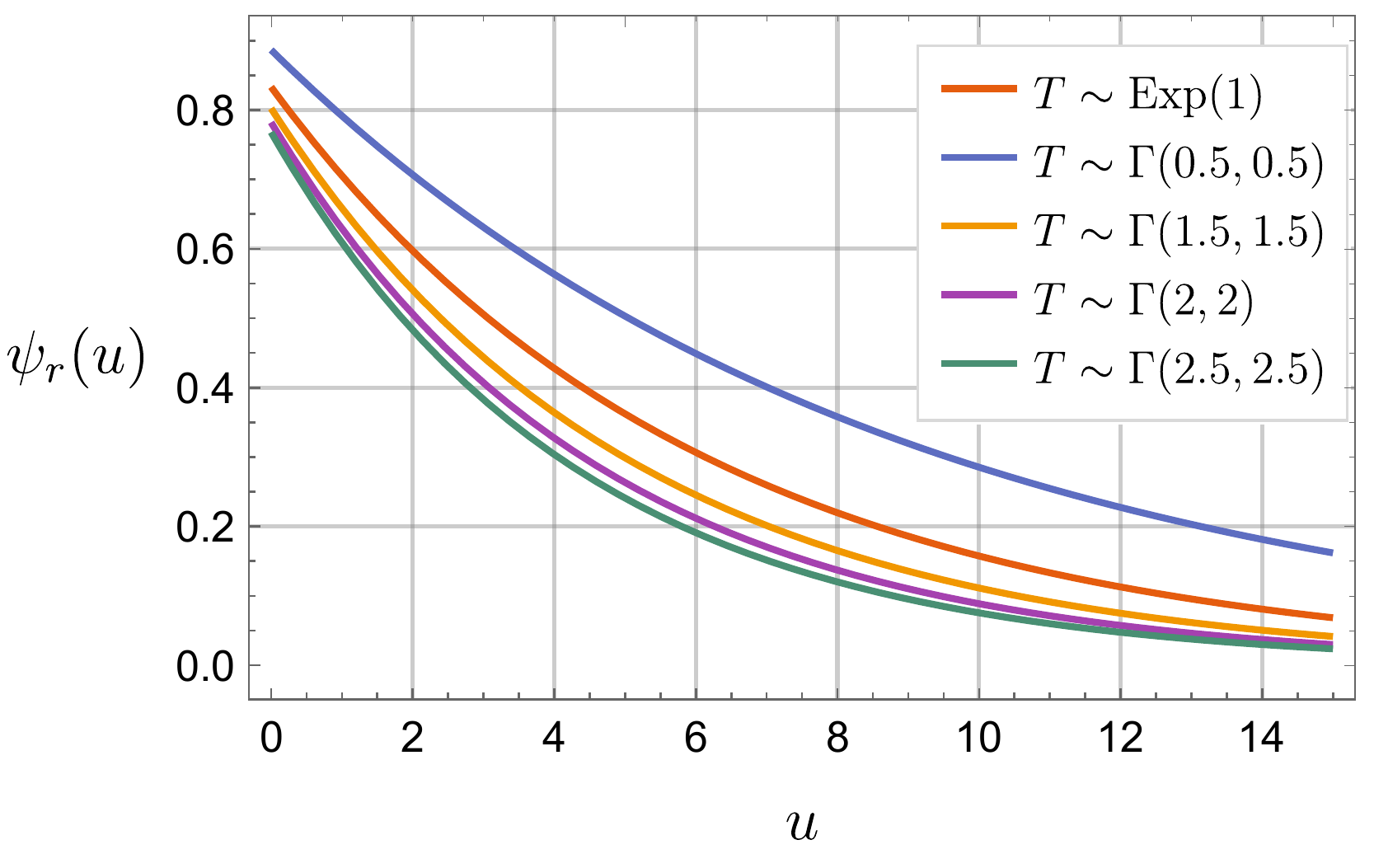}
		}\quad
		\subfloat[]      {
			\includegraphics[width=5cm]{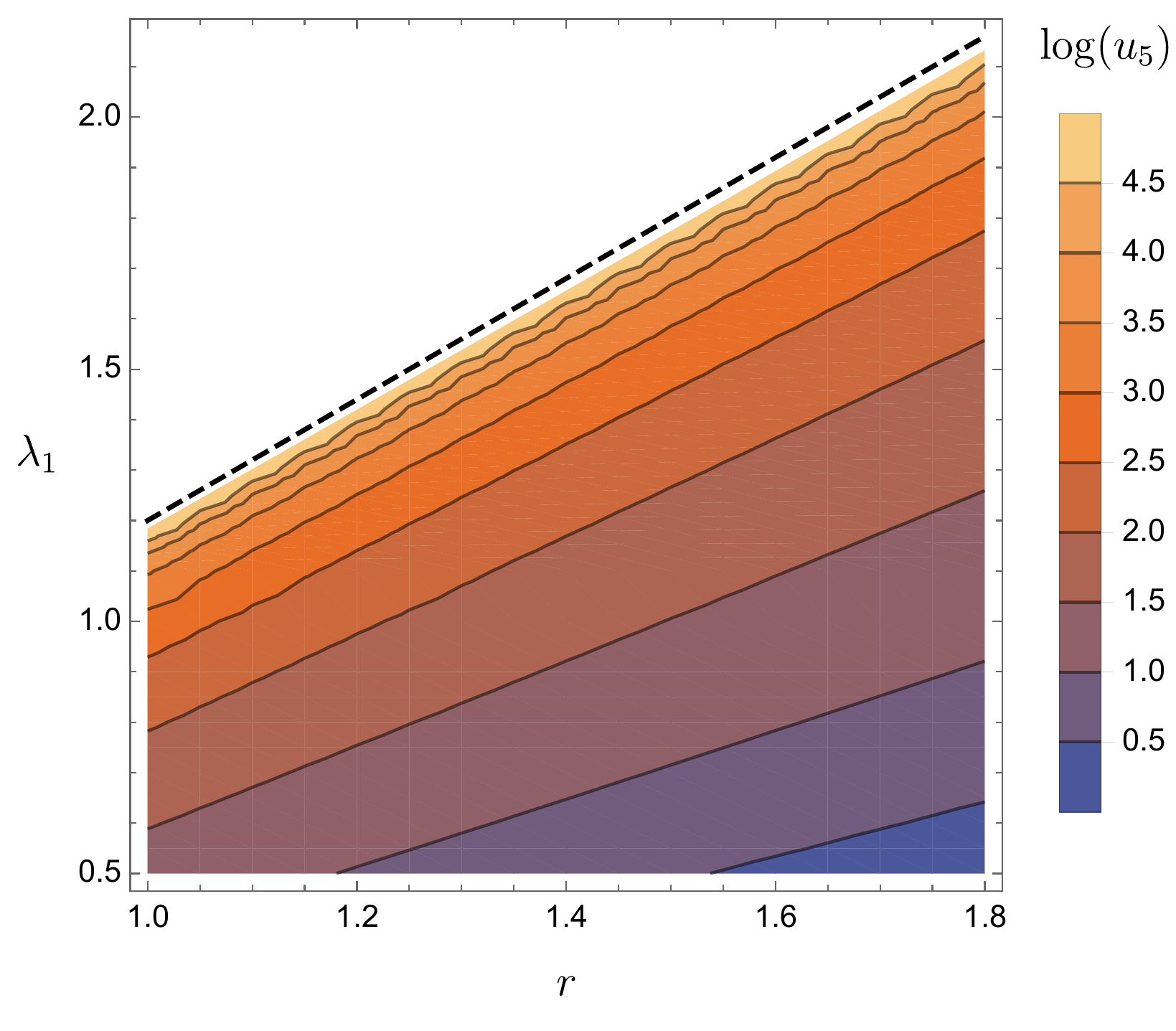}
		}
		\caption{(a) Ruin probabilities in the case of Example \ref{Ex_GammaRM} for $\lambda_1=r = 0.5,\, 1,\, 1.5,\, 2$ and $2.5$. Claim sizes are taken exponentially distributed with mean $\alpha = 1$ and $c = 1.2$ in order to ensure the net profit condition. (b) Natural log of $ u_5 $ (see \eqref{Def_u5}) for the ruin probability in Example \ref{Ex_GammaRM} with continuously varying parameters $ r, \lambda_1 $. The claim sizes have fixed exponential distribution with mean $\alpha = 1$ and premium rate $c = 1.2$. The dotted line limits the region where the net profit condition $r/\lambda_1 < c$ holds (see Equation \eqref{Eq_NetProfit}).} \label{fig:test}
	\end{center}
\end{figure}

Note the substantial impact on $\psi_r(u)$ when changing the Poisson assumption ($r = 1$). Ruin is more likely to happen in the gamma-time risk model with a larger shape parameter $r$ of inter-arrival times, and vice versa. The reason is that in this case, the expected inter-arrival time $r/\lambda_1$ is fixed whereas the variance of inter-arrival time $r/\lambda_1^2$ decreases as $r$ increases, which means that the chance of having shorter waiting periods between claims will decrease. Since ruin is usually caused by not enough capital, the model with a larger shape parameter $r$ is more likely to survive. Figure \ref{fig:test}a coincides with the finding from \cite{li2004ruin}, which focuses on Erlang($n$) risk models.

In Figure \ref{fig:test}b we illustrate the sensitivity to the parameters $ r$ and $\lambda_1 $ of the ruin probability $ \psi_r(u) $ in Example \ref{Ex_GammaRM}. In order to do this, we define the statistic
\begin{equation}\label{Def_u5}
u_5 \defeq \inf\left\{u \geq 0: \psi_r(u) < 0.05\right\}.
\end{equation}
Namely, $ u_5 $ is the minimum capital needed to ensure a ruin probability less than 5\%. Note that any combinations of $r$ and $\lambda_1$ on or above the dashed line, marking the net profit condition, will make the ruin certain. The value of $u_5$ tends to infinity as the parameters approach the dashed line since the safety loading $\frac{c\,\mathbb{E}(T)}{\mathbb{E}(X)}-1$ tends to zero. When $r$ takes large enough values or $\lambda_1$ take small enough values (in bluer areas), the ruin probability might be less than $5\%$ even with zero initial capital. Note that along contour lines, $ \mathrm{d}\lambda_1 \approx \frac{1}{c} \, \mathrm{d} r $, so the sensitivity of the ruin probabilities to its parameters depends almost exclusively on $ c $.

The next example goes a step further and assumes gamma distributions for both the inter-arrival times and the claim sizes. This case is simple enough that the two positive roots of the characteristic equation can be bounded. 

\begin{example}\label{Gammatimegammaclaim}
	In the gamma-time risk model with $\Gamma(r,\lambda_1)$ distributed inter-arrival times and $\Gamma(2,\alpha)$ distributed claim sizes, the ruin probability equals to
	\begin{equation*}
	\psi_r(u)=\frac{\frac{\lambda_1}{c}-z_3}{z_2-z_3}\left(\frac{\lambda_1}{cz_2}\right)^re^{\left(\frac{\lambda_1}{c}-z_2\right) u}+\frac{\frac{\lambda_1}{c}-z_2}{z_3-z_2}\left(\frac{\lambda_1}{cz_3}\right)^re^{\left(\frac{\lambda_1}{c}-z_3\right) u}, \quad u>0,
	\end{equation*}
	where $z_3>\frac{\lambda_1}{c}+\alpha>z_2>\frac{\lambda_1}{c}$ are the two largest roots of the equation
	$$
	c^r z^r \left[z-\left(\frac{\lambda_1}{c}+\alpha\right)\right]^2-\alpha^2 \lambda_1^r=0.
	$$
\end{example}

\subsection{Fractional Poisson Risk Model}
The fractional (compound) Poisson risk model is a special case of the classic compound Poisson risk model \eqref{crm} where the counting process is chosen as fractional Poisson process $N_\mu(t)$. Namely, the inter-arrival times are Mittag-Leffler distributed $ T \sim \text{ML}(\mu, \lambda_2) $, with $ \lambda_2 >0 $, $ 0 < \mu \leqslant 1 $. Since, when $\mu=1$, the fractional Poisson process reduces into a Poisson process, we need the net profit condition to compute the ruin probability. The following examples are derived under the assumption $0<\mu<1$ (in the fractional Poisson risk model). Note that in this case $ \mathbb{E} [T_i] = \infty $, so the net profit condition \eqref{Eq_NetProfit} holds whenever $ \mathbb{E} [X_i] < \infty $. 
It follows from Theorem \ref{mainthm} that the ruin probability $\psi_\mu$ of a fractional Poisson risk model satisfies the following fractional integro-differential equation
\begin{equation*}
c^\mu\prescript{\mathrm{C}}{u} {\mathrm{D}}_\infty^{\mu}\psi_\mu(u)+\lambda_2\psi_\mu(u)=\lambda_2\left[\int_{0}^{u}\psi_\mu(u-y)\,\mathrm{d}F_X(y)+\int_{u}^{\infty}\mathrm{d}F_X(y)\right],
\end{equation*}
with the universal boundary condition $\lim_{u\rightarrow\infty}\psi_\mu(u)=0$. 
Explicit expressions for ruin probabilities in the fractional Poisson risk model with exponential claims has been derived in \cite{biard2014fractional}. The same result can be obtained via the fractional differential equation approach introduced in this paper. 
\begin{example}\label{Ex_FracPoisson}
	In the fractional Poisson risk model with $ T \sim \mathrm{ML}(\mu, \lambda_2) $ and exponentially distributed claim sizes with parameter $ \alpha $, the ruin probability equals
	\begin{equation*}
	\psi_\mu(u)=\left(1-\frac{x_2}{\alpha}\right)e^{-x_2 u},\quad u>0,
	\end{equation*}
	where $x_2$ is the unique positive solution of $
	c^\mu x-\alpha c^\mu +\lambda_2 x^{1-\mu}=0.
	$
\end{example}

Figure \ref{fig:test2}a shows the ruin probability $ \psi_{\mu}(u) $ for various combinations of the parameters $ \lambda_2, \mu $, with fixed exponential claim size distribution.

\begin{figure}[H]
	\begin{center}
		\subfloat[]      {
			\includegraphics[width=6cm]{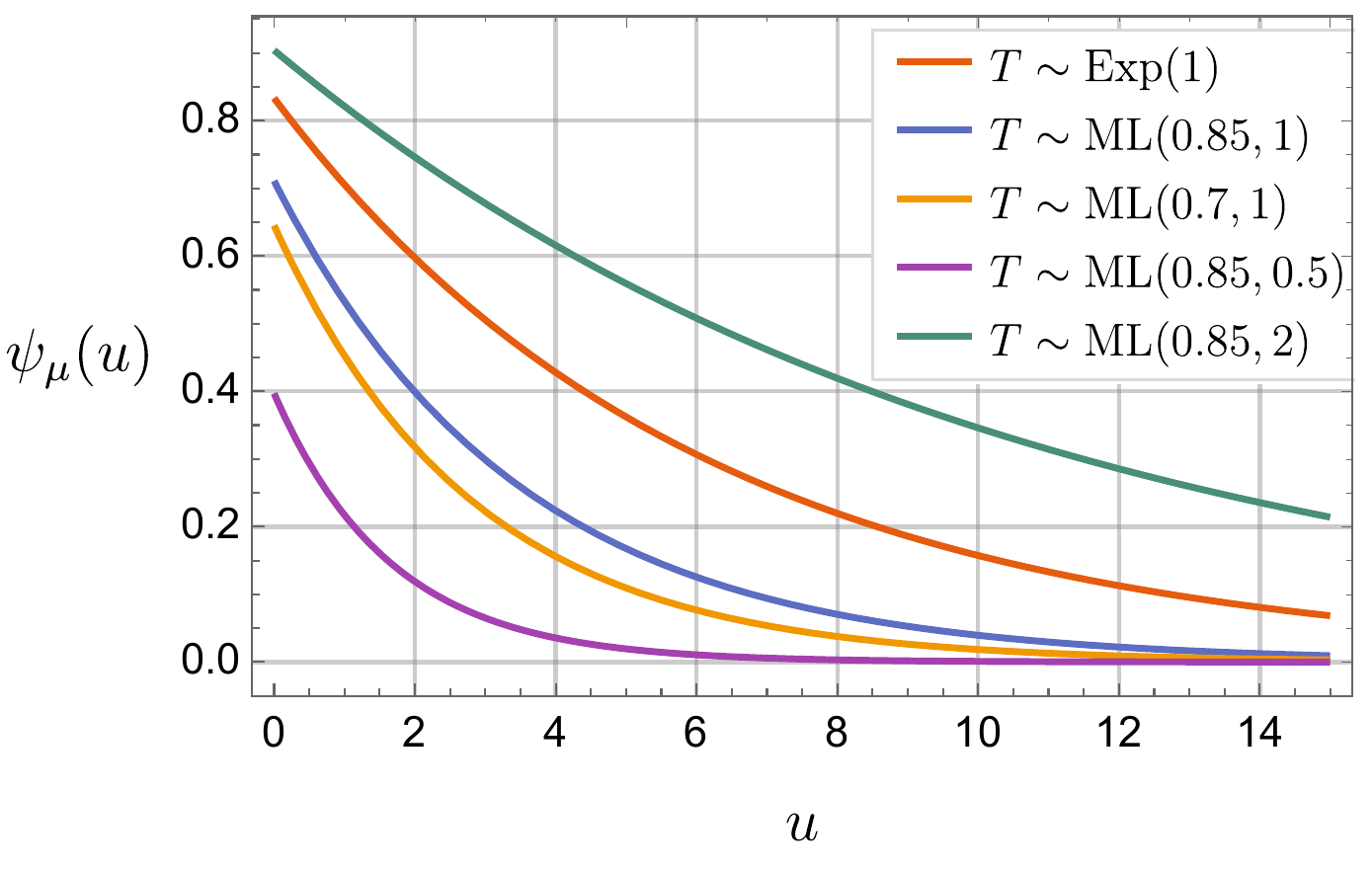}
		}\quad
		\subfloat[]      {
			\includegraphics[width=5cm]{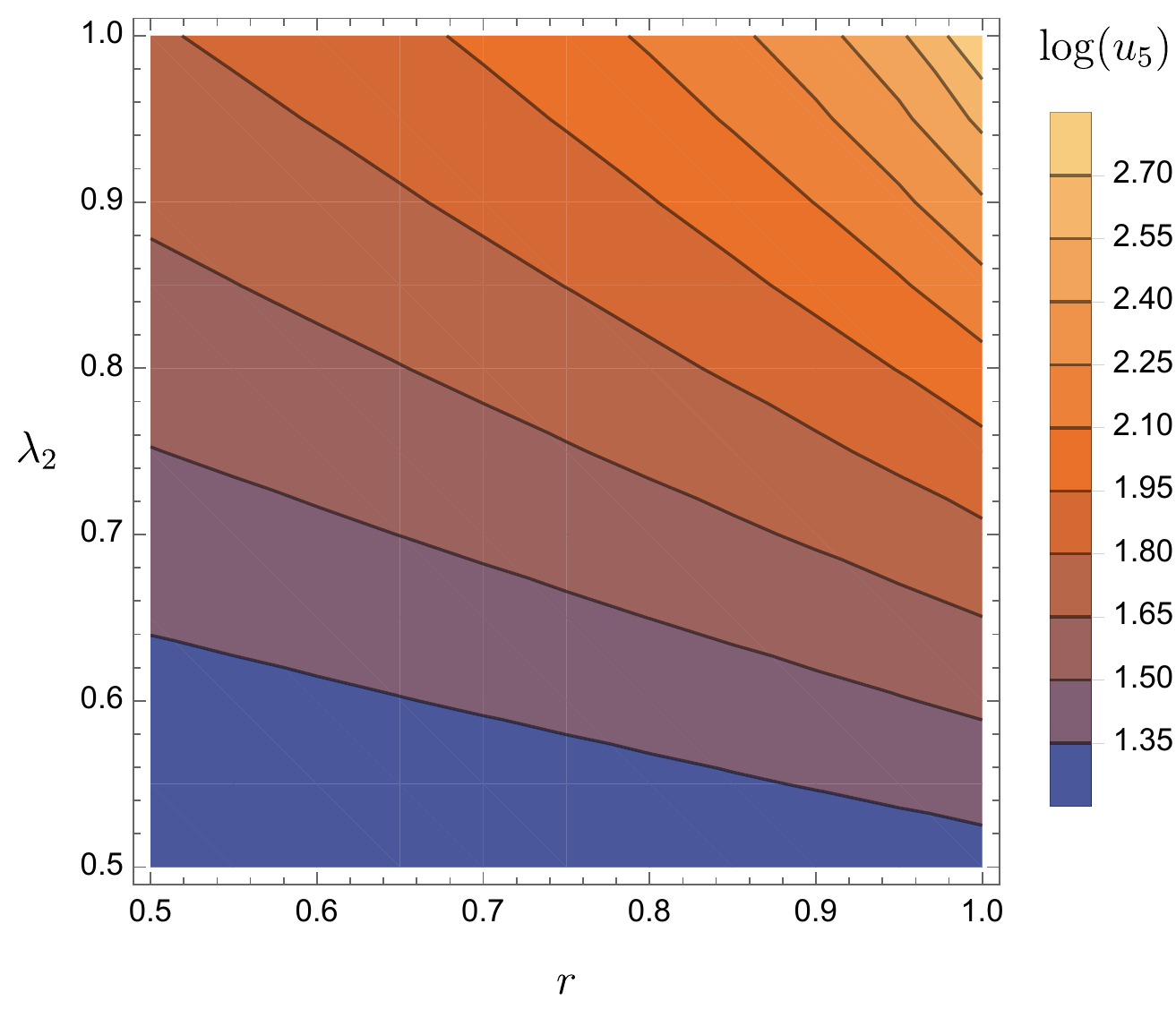}
		}
	\caption{(a) Ruin probabilities in the case of Example \ref{Ex_FracPoisson} for different combinations of $ \lambda_2, \mu $. Claim sizes are taken exponentially distributed with mean $\alpha = 1$ and $c = 1.2$. (b) Natural log of $ u_5 $ (see Equation \eqref{Def_u5}) for the ruin probability in Example \ref{Ex_FracPoisson} with continuously varying parameters $ \mu, \lambda_2 $. The claim sizes have fixed exponential distribution with mean $\alpha = 1$ and premium rate $c = 1.2$.} \label{fig:test2}
	\end{center}
\end{figure}

Note the substantial impact on $\psi_\mu(u)$ when the classical Poisson assumption ($\mu = 1$) is changed. Increasing either $\lambda_2$ or  $\mu$ increases the chances for ruin to happen. The reason is that, for large enough $ t $, the expected number of jumps, before time $t$, in the fractional Poisson process (see Equation \eqref{exnumj}), is an increasing function of both $\lambda_2$ and $ \mu $. 
Figure \ref{fig:test2}b shows the values of natural logarithm of $u_5$ as a function of $\mu$ and $\lambda_2$. Note that the contour lines in this plot are not parallel to each other. As the values of $\mu$ decrease, the parameter $\lambda_2$ plays a less significant role in the ruin probability function.

Notice that the operator $\prescript{\mathrm{C}}{u} {\mathrm{D}}_\infty^{\mu}$ tends to the identity operator, when $\mu\rightarrow 0+$. Thus, we obtain the following result.

\begin{corollary}
	In the fractional Poisson risk model, the ruin probability $\psi_\mu(u)$ converges to a function $\psi_0(u)$, as $\mu\rightarrow 0$. Moreover, the function $\psi_0(u)$ satisfies an integral equation
	\begin{equation}\label{conv}
	\left(1+\lambda_2\right)\psi_0(u)=\lambda_2\int_{0}^{u}\psi_0(u-y)\,\mathrm{d}F_X(y)+\lambda_2\int_{u}^{\infty}\mathrm{d}F_X(y),
	\end{equation}
	with the universal boundary condition $\lim_{u\rightarrow\infty}\psi_0(u)=0$. 
\end{corollary}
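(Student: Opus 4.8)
The plan is to derive \eqref{conv} by passing to the limit $\mu\to 0+$ directly in the governing fractional integro-differential equation for $\psi_\mu$,
\begin{equation*}
c^\mu\prescript{\mathrm{C}}{u}{\mathrm{D}}_\infty^{\mu}\psi_\mu(u)+\lambda_2\psi_\mu(u)=\lambda_2\left[\int_{0}^{u}\psi_\mu(u-y)\,\mathrm{d}F_X(y)+\int_{u}^{\infty}\mathrm{d}F_X(y)\right].
\end{equation*}
The prefactor is immediate, $c^\mu\to 1$, and the analytic heart is the stated convergence $\prescript{\mathrm{C}}{u}{\mathrm{D}}_\infty^{\mu}\to\mathrm{Id}$. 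Writing the right Caputo derivative for $0<\mu<1$ as
\begin{equation*}
\prescript{\mathrm{C}}{u}{\mathrm{D}}_\infty^{\mu}[g](u)=\frac{-1}{\Gamma(1-\mu)}\int_u^\infty (t-u)^{-\mu}\,g'(t)\,\mathrm{d}t,
\end{equation*}
and using $\Gamma(1-\mu)\to 1$ together with $(t-u)^{-\mu}\to 1$, dominated convergence gives $\prescript{\mathrm{C}}{u}{\mathrm{D}}_\infty^{\mu}[g](u)\to -\int_u^\infty g'(t)\,\mathrm{d}t=g(u)$ whenever $g(\infty)=0$ and $g'\in L^1$. Since the universal boundary condition forces $\psi_\mu(\infty)=0$, this sends the left-hand side to $(1+\lambda_2)\psi_0(u)$, while the right-hand side depends on $\mu$ only through $\psi_\mu$ and converges to the convolution expression in \eqref{conv}.

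First I would justify the existence of the pointwise limit $\psi_0(u)=\lim_{\mu\to 0+}\psi_\mu(u)$. For claim sizes of the tractable type treated in Section \ref{subsec_cem}, the solution has the explicit form \eqref{Eq_solPhi}, and since $e^{-zu}$ is an eigenfunction of $\prescript{\mathrm{C}}{u}{\mathrm{D}}_\infty^{\mu}$ with eigenvalue $z^\mu$ (Proposition \ref{effd}), the characteristic roots $z_p(\mu)$ and the coefficients $K_p(\mu)$ depend continuously on $\mu$. Letting $\mu\to 0+$ replaces each factor $z^\mu$ by $1$, so the limiting characteristic equation is precisely the one associated with the integral equation \eqref{conv}; continuity of its roots and of the linear system for the $K_p$ then yields a well-defined limit $\psi_0$ with $\psi_0(\infty)=0$. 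For a general claim distribution $F_X$ I would instead argue at the level of Laplace transforms in $u$, showing that the transform of $\psi_\mu$ converges to that of the unique solution of \eqref{conv} and inverting.

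The main obstacle is the double limit concealed in $\prescript{\mathrm{C}}{u}{\mathrm{D}}_\infty^{\mu}[\psi_\mu]$, where both the operator order $\mu$ and the argument $\psi_\mu$ move at once. I would resolve this through the splitting
\begin{equation*}
\prescript{\mathrm{C}}{u}{\mathrm{D}}_\infty^{\mu}[\psi_\mu]-\psi_0=\prescript{\mathrm{C}}{u}{\mathrm{D}}_\infty^{\mu}[\psi_\mu-\psi_0]+\left(\prescript{\mathrm{C}}{u}{\mathrm{D}}_\infty^{\mu}[\psi_0]-\psi_0\right),
\end{equation*}
in which the second bracket tends to zero by the single-function limit of the first paragraph, while the first term is controlled once $\psi_\mu\to\psi_0$ in a norm strong enough (for instance $\|\psi_\mu'-\psi_0'\|_{L^1}\to 0$) that the family $\{\prescript{\mathrm{C}}{u}{\mathrm{D}}_\infty^{\mu}\}$ is uniformly bounded for small $\mu$. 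The exponential-mixture structure of $\psi_\mu$ makes these derivative bounds explicit through the roots $z_p(\mu)$, so the required uniformity follows from their continuity in $\mu$. Passing to the limit termwise then produces \eqref{conv}, and $\lim_{u\to\infty}\psi_0(u)=0$ is inherited from the boundary condition on $\psi_\mu$.
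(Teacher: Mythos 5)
Your proposal is correct and takes essentially the same route as the paper: the paper's entire justification is the observation that $c^\mu\to 1$ and $\prescript{\mathrm{C}}{u}{\mathrm{D}}_\infty^{\mu}\to\mathrm{Id}$ as $\mu\to 0+$, followed by passing to the limit in the fractional integro-differential equation for $\psi_\mu$, which is exactly your first step. Your additional work (the dominated-convergence argument for the operator limit, continuity of the characteristic roots $z_p(\mu)$ and coefficients $K_p(\mu)$ to secure existence of $\psi_0$, and the splitting that disentangles the double limit in $\prescript{\mathrm{C}}{u}{\mathrm{D}}_\infty^{\mu}[\psi_\mu]$) supplies rigor the paper leaves implicit, and your sign convention for the right Caputo derivative is the one actually consistent with the paper's Proposition \ref{effd}.
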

Substituting $u=0$ into Equation \eqref{conv} gives $\psi_0(0)=\frac{\lambda_2}{\lambda_2+1}$, which only depends on the value of $\lambda_2$. Taking Laplace transform both sides with respect to $u$ leads to
$$
\hat{\psi}_0(s)=\frac{1-\hat{f}(s)}{(\lambda_2+1)s-\lambda_2 s\hat{f}(s)},
$$
which can be explicitly inverted back in some cases.

\appendix\normalsize
\section{Basic facts from fractional calculus}\label{bkfc}
The fractional calculus is the theory of integrals and derivatives of arbitrary order, which unifies and generalizes the notions of integer-order differentiation and $n$-fold integration \cite{podlubny1998fractional}. The definitions of several special functions, fractional integrals and fractional derivatives used in this paper are listed in this section.

\subsection{Mittag-Leffler Function}
The Mittag-Leffler function was firstly introduced by \cite{mittag1903nouvelle} as a generalization of the exponential function.
\begin{definition}
	The \emph{two-parameter Mittag-Leffler function} is defined as
	\begin{align}\label{TPML}
	E_{\alpha,\beta}(z)=\sum_{k=0}^{\infty}\frac{z^k}{\Gamma(\alpha k+\beta)},\quad \alpha,\beta\in\mathbb{C},\,\Re(\alpha)>0,\,\Re(\beta)>0,\,z\in\mathbb{C}.
	\end{align}
\end{definition}

\begin{proposition}\label{LTML}
	The Laplace transform of $z^{\alpha k+\beta-1}E^{(k)}_{\alpha,\beta}(\pm az^\alpha)$ is (see Equation (1.80) in \cite{podlubny1998fractional})
	\begin{equation*}\label{lt2ml}
	\int_{0}^{\infty}e^{-sz}z^{\alpha k+\beta-1}E^{(k)}_{\alpha,\beta}(\pm az^\alpha)\,\mathrm{d}z=\frac{k!s^{\alpha-\beta}}{(s^\alpha\mp a)^{k+1}},\quad \Re(s)>|a|^{1/\alpha}.
	\end{equation*}
\end{proposition}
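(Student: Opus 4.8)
The plan is to compute the transform by expanding the $k$-th derivative of the Mittag-Leffler function as a power series, integrating term by term, and recognizing the resulting sum as a generalized geometric series. First I would differentiate the defining series \eqref{TPML} $k$ times with respect to its argument and reindex by $i=j-k$, which gives
\[
E^{(k)}_{\alpha,\beta}(w)=\sum_{i=0}^{\infty}\frac{(i+k)!}{i!}\,\frac{w^{i}}{\Gamma(\alpha(i+k)+\beta)}.
\]
Substituting $w=\pm a z^{\alpha}$ and multiplying by $z^{\alpha k+\beta-1}$ then produces a power series in $z$ whose general term is a constant multiple of $z^{\alpha(i+k)+\beta-1}$.

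Next I would apply the elementary identity $\int_{0}^{\infty}e^{-sz}z^{\gamma-1}\,\mathrm{d}z=\Gamma(\gamma)/s^{\gamma}$ with $\gamma=\alpha(i+k)+\beta$ to each term. The crucial feature is that the factor $\Gamma(\alpha(i+k)+\beta)$ produced by the integral cancels exactly the same factor in the denominator of the series coefficient, leaving
\[
\frac{1}{s^{\alpha k+\beta}}\sum_{i=0}^{\infty}\frac{(i+k)!}{i!}\left(\frac{\pm a}{s^{\alpha}}\right)^{i}.
\]
Writing $(i+k)!/i!=k!\binom{i+k}{k}$ and invoking the binomial identity $\sum_{i\geqslant 0}\binom{i+k}{k}x^{i}=(1-x)^{-(k+1)}$, valid for $|x|<1$, with $x=\pm a/s^{\alpha}$, collapses the sum to $k!\,(1\mp a/s^{\alpha})^{-(k+1)}$. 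A brief simplification of the powers of $s$ then yields $k!\,s^{\alpha-\beta}/(s^{\alpha}\mp a)^{k+1}$, the claimed expression. The convergence requirement $|x|<1$ is precisely $|a|<|s|^{\alpha}$, which for the relevant range reduces to $\Re(s)>|a|^{1/\alpha}$.

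The step requiring the most care, and the one I expect to be the main obstacle, is justifying the interchange of summation and integration in the term-by-term transform, since the Mittag-Leffler series converges only conditionally near the boundary of the stated region. To handle this I would first work on the positive real ray $s>|a|^{1/\alpha}$, where the series of transforms of the absolute values of the terms is exactly the convergent geometric-type series displayed above; Tonelli's theorem then legitimizes the interchange there. Because both sides of the identity are analytic in $s$ on the half-plane $\Re(s)>|a|^{1/\alpha}$ — the left side from decay of the integrand and the right side from the absence of denominator zeros — the equality extends from the ray to the entire half-plane by the identity theorem for analytic functions. This completes the computation, which recovers Equation~(1.80) of \cite{podlubny1998fractional} as cited.
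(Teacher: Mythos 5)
Your derivation is correct, and it is essentially the classical argument behind Equation (1.80) of \cite{podlubny1998fractional}, which the paper itself cites without proof: differentiate the series \eqref{TPML} $k$ times, Laplace-transform term by term so that the $\Gamma(\alpha(i+k)+\beta)$ factors cancel, and collapse the remaining sum via the binomial series $\sum_{i\geqslant 0}\binom{i+k}{k}x^{i}=(1-x)^{-(k+1)}$. One minor simplification: since $\Re(s)>|a|^{1/\alpha}$ already gives $|s|^{\alpha}\geqslant\Re(s)^{\alpha}>|a|$ and the absolute value of the integrand is dominated by $e^{-\Re(s)z}$ times the same positive series, Tonelli--Fubini justifies the interchange directly on the whole half-plane, so your detour through the real ray followed by the identity theorem is unnecessary (though harmless).
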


\subsection{Fractional Integrals and Derivatives}
As per \cite{hilfer2008threefold}, we define and denote:
\begin{definition}\label{Def_RLI}
	The \emph{left Riemann-Liouville fractional integral} of order $r>0$ with lower limit $a \in  \mathbb{R}$ is defined on locally integrable functions $f$ as
	\begin{equation*}
	\prescript{}{a} {\mathrm{I}}^{r}_{x}f(x)=\frac{1}{\Gamma(r)}\int_{a}^{x}(x-y)^{r-1}f(y)\,\mathrm{d}y,\quad x>a,
	\end{equation*}
	and the \emph{right Riemann-Liouville fractional integral} of order $r>0$ with upper limit $b \in \mathbb{R}$ is defined as
	\begin{equation*}
	\prescript{}{x} {\mathrm{I}}^{r}_{b}f(x)=\frac{1}{\Gamma(r)}\int_{x}^{b}(y-x)^{r-1}f(y)\,\mathrm{d}y,\quad x<b.
	\end{equation*}
\end{definition}

\begin{definition}\label{Def_RLD}
	The \emph{left Riemann-Liouville fractional derivative} of order $r>0$ with lower limit $a$ is defined as the integer order derivatives of fractional integrals as follows,
	\begin{equation*}\label{lrlfd}
	\prescript{}{a} {\mathrm{D}}^{r}_{x}f(x)=\frac{1}{\Gamma(n-r)}\frac{\mathrm{d}^n}{\mathrm{d}x^n}\int_{a}^{x}(x-y)^{n-r-1}f(y)\,\mathrm{d}y,\quad x>a,
	\end{equation*}
	where $n=\lfloor r\rfloor +1$, and $\lfloor r \rfloor$ denotes the floor function. Similarly, the right Riemann-Liouville fractional derivative of order $r>0$ with upper limit $b$ is defined as
	\begin{equation*}
	\label{rrlfd}
	\prescript{}{x} {\mathrm{D}}^{r}_{b}f(x)=(-1)^n\frac{1}{\Gamma(n-r)}\frac{\mathrm{d}^n}{\mathrm{d}x^n}\int_{x}^{b}(y-x)^{n-r-1}f(y)\,\mathrm{d}y, \quad x<b.
	\end{equation*}
	These two operators are well defined on the Lebesgue space $L^{\lceil r\rceil}([a,b])$ (see Definition 2.1 in \cite{samko1993fractional}). Here, $\lceil r \rceil$ denotes the ceiling function.
\end{definition}

\begin{proposition}\label{rlsemi}
	The Riemann-Liouville fractional derivatives are the left inverse operators of the corresponding fractional integrals (see Section 3.2 in \cite{valerio2013fractional})
	\begin{equation*}
	\prescript{}{a} {\mathrm{D}}^{r}_{x} \prescript{}{a} {\mathrm{I}}^{r}_{x}f(x)=f(x)\quad \text{and}\quad
	\prescript{}{x} {\mathrm{D}}^{r}_{b} \prescript{}{x} {\mathrm{I}}^{r}_{b} f(x)=f(x), \quad \text{for any} \,\,r\in\mathbb{C}.
	\end{equation*}
\end{proposition}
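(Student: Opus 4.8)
The plan is to prove the identity by rewriting each Riemann--Liouville derivative as an ordinary $n$-fold derivative composed with a fractional integral, and then collapsing the composition of the two fractional integrals using the additivity (semigroup) law for orders. Writing $n=\lfloor r\rfloor+1$ as in Definition~\ref{Def_RLD}, the left derivative factors exactly as
\begin{equation*}
\prescript{}{a}{\mathrm{D}}^{r}_{x}=\frac{\mathrm{d}^n}{\mathrm{d}x^n}\,\prescript{}{a}{\mathrm{I}}^{n-r}_{x},
\end{equation*}
so that $\prescript{}{a}{\mathrm{D}}^{r}_{x}\,\prescript{}{a}{\mathrm{I}}^{r}_{x}f=\frac{\mathrm{d}^n}{\mathrm{d}x^n}\bigl(\prescript{}{a}{\mathrm{I}}^{n-r}_{x}\,\prescript{}{a}{\mathrm{I}}^{r}_{x}f\bigr)$. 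Since $n-r>0$ and $r>0$, once the inner composition is combined the resulting order is the integer $n$, which is precisely the case where the fractional integral reduces to the classical iterated integral.

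The key step is the semigroup law for the left Riemann--Liouville integral,
\begin{equation*}
\prescript{}{a}{\mathrm{I}}^{\rho}_{x}\,\prescript{}{a}{\mathrm{I}}^{\sigma}_{x}f=\prescript{}{a}{\mathrm{I}}^{\rho+\sigma}_{x}f,\qquad \Re(\rho),\Re(\sigma)>0.
\end{equation*}
To establish it I would write out the iterated integral from Definition~\ref{Def_RLI}, interchange the order of integration by Fubini (valid for locally integrable $f$ on the bounded triangle $a\le t\le y\le x$), and evaluate the inner integral $\int_t^x (x-y)^{\rho-1}(y-t)^{\sigma-1}\,\mathrm{d}y$ via the substitution $y=t+(x-t)u$, $u\in[0,1]$. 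This collapses the inner integral to $(x-t)^{\rho+\sigma-1}B(\sigma,\rho)$, whereupon the Beta--Gamma identity $B(\sigma,\rho)=\Gamma(\sigma)\Gamma(\rho)/\Gamma(\rho+\sigma)$ cancels exactly against the prefactor $1/(\Gamma(\rho)\Gamma(\sigma))$, leaving the kernel $(x-t)^{\rho+\sigma-1}/\Gamma(\rho+\sigma)$ that defines $\prescript{}{a}{\mathrm{I}}^{\rho+\sigma}_{x}$.

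Applying the semigroup law with $\rho=n-r$ and $\sigma=r$ yields $\prescript{}{a}{\mathrm{I}}^{n-r}_{x}\,\prescript{}{a}{\mathrm{I}}^{r}_{x}f=\prescript{}{a}{\mathrm{I}}^{n}_{x}f$, which is the $n$-fold iterated integral of $f$ (Cauchy's repeated-integration formula). Differentiating $n$ times and applying the fundamental theorem of calculus at each stage gives $\frac{\mathrm{d}^n}{\mathrm{d}x^n}\prescript{}{a}{\mathrm{I}}^{n}_{x}f=f$, which is the first identity; values $r\in\mathbb{C}$ with $\Re(r)>0$ are covered verbatim, and the remaining $r$ by analytic continuation in the order. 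The right-sided identity $\prescript{}{x}{\mathrm{D}}^{r}_{b}\,\prescript{}{x}{\mathrm{I}}^{r}_{b}f=f$ follows by the same argument under the reflection $\mathcal{R}f(x)=f(a+b-x)$: one checks $\prescript{}{x}{\mathrm{I}}^{r}_{b}=\mathcal{R}\,\prescript{}{a}{\mathrm{I}}^{r}_{x}\,\mathcal{R}$ and $\prescript{}{x}{\mathrm{D}}^{r}_{b}=\mathcal{R}\,\prescript{}{a}{\mathrm{D}}^{r}_{x}\,\mathcal{R}$, where the $(-1)^n$ built into the right derivative in Definition~\ref{Def_RLD} is exactly the sign produced by differentiating the reflected integral $n$ times, so that $\prescript{}{x}{\mathrm{D}}^{r}_{b}\,\prescript{}{x}{\mathrm{I}}^{r}_{b}=\mathcal{R}\,\mathcal{R}=\mathrm{id}$. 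I expect the main obstacle to be the careful justification of the semigroup lemma---both the Fubini interchange and the regularity needed to pass $\mathrm{d}^n/\mathrm{d}x^n$ through the integral sign---rather than any conceptual difficulty; once that lemma is secured, the two identities are immediate.
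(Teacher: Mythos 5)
The paper does not actually prove Proposition~\ref{rlsemi}: it is imported as a known fact from Section 3.2 of \cite{valerio2013fractional}, so there is no in-paper argument to compare against. Your proof is the standard textbook one, and it is correct where the paper's definitions make sense, i.e.\ for $\Re(r)>0$: the factorization $\prescript{}{a}{\mathrm{D}}^{r}_{x}=\frac{\mathrm{d}^n}{\mathrm{d}x^n}\,\prescript{}{a}{\mathrm{I}}^{n-r}_{x}$ is literally Definition~\ref{Def_RLD}; the semigroup law via Tonelli--Fubini plus the Beta--Gamma identity is sound for locally integrable $f$ (the kernel is nonnegative, so the interchange needs no extra hypotheses); collapsing to $\prescript{}{a}{\mathrm{I}}^{n}_{x}f$ and differentiating $n$ times gives back $f$ --- though you should note that the final differentiation returns $f$ only almost everywhere (Lebesgue differentiation) unless $f$ is continuous; and the reflection $\mathcal{R}f(x)=f(a+b-x)$ does conjugate the left operators into the right ones with the $(-1)^n$ in Definition~\ref{Def_RLD} exactly absorbing the sign from the chain rule, so the right-sided identity follows from the left one as you claim.

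The one genuine loose end is the final sentence asserting that ``the remaining $r$'' follow ``by analytic continuation in the order.'' For $\Re(r)\leqslant 0$ the kernel $(x-y)^{r-1}$ is not locally integrable, so $\prescript{}{a}{\mathrm{I}}^{r}_{x}$ is not defined by Definition~\ref{Def_RLI} at all, and there is no operator identity in place to which one could continue; what extends analytically is the scalar function $r\mapsto\prescript{}{a}{\mathrm{I}}^{r}_{x}f(x)$ for fixed well-behaved $f$ and $x$, which is a different (and weaker) statement. This is partly a defect of the proposition itself --- ``for any $r\in\mathbb{C}$'' is looser than the definitions in the paper support, and the paper only ever invokes the result for real positive orders --- but a self-contained proof should either restrict the claim to $\Re(r)>0$ or specify what the operators mean outside that half-plane before invoking continuation.
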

%
\begin{proposition}\label{mlfifd}
	The left Riemann-Liouville fractional integrals $\prescript{}{a} {\mathrm{I}}^{r}_{x}$ and left fractional derivative $\prescript{}{a} {\mathrm{D}}^{r}_{x}$ of the power function $(x-a)^p$ are (see Equation (2.117) in \cite{podlubny1998fractional})
	\begin{equation*}
	\prescript{}{a} {\mathrm{I}}^{r}_{x}(x-a)^p=\frac{\Gamma(1+p)}{\Gamma(1+p+r)}(x-a)^{p+r}
	\,\text{and}\,
	\prescript{}{a} {\mathrm{D}}^{r}_{x}(x-a)^p=\frac{\Gamma(1+p)}{\Gamma(1+p-r)}(x-a)^{p-r}.
	\end{equation*}
\end{proposition}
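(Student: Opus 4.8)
The plan is to derive both identities directly from Definition~\ref{Def_RLI} and Definition~\ref{Def_RLD}, reducing each to a Beta-function integral. Throughout I assume $p>-1$, so that $(x-a)^p$ is locally integrable and the Riemann--Liouville integral converges at the lower endpoint.

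First I would establish the integral formula. Starting from the definition,
\[
\prescript{}{a} {\mathrm{I}}^{r}_{x}(x-a)^p=\frac{1}{\Gamma(r)}\int_{a}^{x}(x-y)^{r-1}(y-a)^p\,\mathrm{d}y,
\]
I perform the affine substitution $y=a+t(x-a)$ with $t\in[0,1]$, under which $x-y=(x-a)(1-t)$ and $\mathrm{d}y=(x-a)\,\mathrm{d}t$. Factoring the powers of $(x-a)$ out of the integral collapses everything to
\[
\frac{(x-a)^{p+r}}{\Gamma(r)}\int_{0}^{1}(1-t)^{r-1}t^{p}\,\mathrm{d}t=\frac{(x-a)^{p+r}}{\Gamma(r)}\,\frac{\Gamma(p+1)\Gamma(r)}{\Gamma(p+1+r)},
\]
where the integral is the Beta function $B(p+1,r)$ evaluated through the Beta--Gamma relation. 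Cancelling $\Gamma(r)$ yields exactly the claimed coefficient $\Gamma(1+p)/\Gamma(1+p+r)$. This Beta integral is the only genuine computation in the argument.

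For the derivative formula I would exploit the factorisation built into Definition~\ref{Def_RLD}: with $n=\lfloor r\rfloor+1$ one has $\prescript{}{a} {\mathrm{D}}^{r}_{x}=\frac{\mathrm{d}^n}{\mathrm{d}x^n}\,\prescript{}{a} {\mathrm{I}}^{\,n-r}_{x}$. Applying the integral identity just proved, with $n-r$ in place of $r$, gives
\[
\prescript{}{a} {\mathrm{I}}^{\,n-r}_{x}(x-a)^p=\frac{\Gamma(1+p)}{\Gamma(1+p+n-r)}(x-a)^{p+n-r}.
\]
It then remains to differentiate this monomial $n$ times. The generalized power rule $\frac{\mathrm{d}^n}{\mathrm{d}x^n}(x-a)^{q}=\frac{\Gamma(q+1)}{\Gamma(q+1-n)}(x-a)^{q-n}$, applied at $q=p+n-r$, supplies the factor $\Gamma(1+p+n-r)/\Gamma(1+p-r)$, whose numerator cancels the denominator produced by the integral step. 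What survives is precisely $\frac{\Gamma(1+p)}{\Gamma(1+p-r)}(x-a)^{p-r}$, as claimed.

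There is no serious obstacle; the result is classical and each step is elementary. The only points deserving care are the integrability hypothesis $p>-1$, needed for convergence at the lower limit, and the bookkeeping linking the integer order $n=\lfloor r\rfloor+1$ to the fractional order $r$, ensuring that the intermediate exponent $p+n-r$ is large enough for the $n$-fold ordinary derivative to be taken termwise without degeneracy.
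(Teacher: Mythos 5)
Your proposal is correct. Note, however, that the paper itself offers no proof of Proposition~\ref{mlfifd}: it is stated as background and attributed to Equation~(2.117) of \cite{podlubny1998fractional}, so there is nothing internal to compare against. Your derivation is the standard one (and essentially the one in the cited reference): reduce $\prescript{}{a}{\mathrm{I}}^{r}_{x}(x-a)^p$ to the Beta integral $B(p+1,r)$ via the substitution $y=a+t(x-a)$, then obtain the derivative formula from the factorisation $\prescript{}{a}{\mathrm{D}}^{r}_{x}=\frac{\mathrm{d}^n}{\mathrm{d}x^n}\,\prescript{}{a}{\mathrm{I}}^{\,n-r}_{x}$ with $n=\lfloor r\rfloor+1$, using the generalized power rule so that the Gamma factors telescope. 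Two small points you handle well and that are worth keeping explicit: the hypothesis $p>-1$ (which the paper's statement omits but which is genuinely needed for convergence at the lower limit), and the observation that $n-r\in(0,1]$, so the already-proved integral identity legitimately applies with $n-r$ in place of $r$; your argument also gracefully covers the degenerate case $1+p-r\in\mathbb{Z}_{\leqslant 0}$, where the product $q(q-1)\cdots(q-n+1)$ vanishes and the derivative is zero, consistent with the pole of $\Gamma(1+p-r)$.
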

%
%
\begin{proposition}
	The left fractional derivative $\prescript{}{0} {\mathrm{D}}^{r}_{x}$ of the two-parameter Mittag-Leffler functions satisfies (see Equation (1.82) in \cite{podlubny1998fractional})
	\begin{equation*}
	\prescript{}{0} {\mathrm{D}}^{r}_{x}\left[x^{\alpha k+\beta-1}E^{(k)}_{\alpha,\beta}(\lambda x^\alpha)\right]=x^{\alpha k+\beta-r-1}E^{(k)}_{\alpha,\beta-r}(\lambda x^\alpha).
	\end{equation*}
\end{proposition}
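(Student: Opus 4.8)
The plan is to verify the identity by reducing everything to a term-by-term application of the power rule for the left Riemann--Liouville fractional derivative (Proposition \ref{mlfifd}). The function being differentiated is, up to the prefactor $x^{\alpha k+\beta-1}$, the $k$-th derivative of an entire function, so a power-series approach is the natural route. First I would write $E^{(k)}_{\alpha,\beta}$ explicitly by differentiating the defining series \eqref{TPML} $k$ times and reindexing with $i=j-k$, obtaining
\[
E^{(k)}_{\alpha,\beta}(z)=\sum_{i=0}^\infty \frac{(i+k)!}{i!}\,\frac{z^i}{\Gamma(\alpha(i+k)+\beta)}.
\]
Multiplying by $x^{\alpha k+\beta-1}$ and substituting $z=\lambda x^\alpha$ then expresses the argument of the fractional derivative as a genuine power series in $x$,
\[
x^{\alpha k+\beta-1}E^{(k)}_{\alpha,\beta}(\lambda x^\alpha)=\sum_{i=0}^\infty \frac{(i+k)!}{i!}\,\frac{\lambda^i}{\Gamma(\alpha(i+k)+\beta)}\,x^{\alpha(i+k)+\beta-1},
\]
where every exponent $\alpha(i+k)+\beta-1$ has real part exceeding $-1$ (since $\Re(\alpha),\Re(\beta)>0$ and $k\geq 0$), so each monomial is locally integrable at the origin and the power rule applies.

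Next I would apply $\prescript{}{0}{\mathrm{D}}^{r}_{x}$ term by term, invoking Proposition \ref{mlfifd} with $a=0$ and $p=\alpha(i+k)+\beta-1$. For each summand this gives
\[
\prescript{}{0}{\mathrm{D}}^{r}_{x}\,x^{\alpha(i+k)+\beta-1}=\frac{\Gamma(\alpha(i+k)+\beta)}{\Gamma(\alpha(i+k)+\beta-r)}\,x^{\alpha(i+k)+\beta-r-1}.
\]
The decisive simplification is that the factor $\Gamma(\alpha(i+k)+\beta)$ generated by the power rule cancels exactly against the identical factor in the denominator of the series coefficient. Re-summing the resulting series and factoring out $x^{\alpha k+\beta-r-1}$ reproduces precisely $x^{\alpha k+\beta-r-1}E^{(k)}_{\alpha,\beta-r}(\lambda x^\alpha)$, which is the asserted right-hand side; note that the second Mittag--Leffler parameter has shifted from $\beta$ to $\beta-r$ exactly because of this cancellation.

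The single step that requires genuine care---and the main obstacle---is justifying the interchange of $\prescript{}{0}{\mathrm{D}}^{r}_{x}$ with the infinite summation. Since by Definition \ref{Def_RLD} the operator is the $n$-fold ordinary derivative ($n=\lfloor r\rfloor+1$) of a Riemann--Liouville integral, I would argue that the power series for $x^{\alpha k+\beta-1}E^{(k)}_{\alpha,\beta}(\lambda x^\alpha)$ converges uniformly on compact subsets of $(0,\infty)$, that termwise application of the fractional integral preserves this local uniform convergence, and that the termwise-differentiated series again converges locally uniformly. This last point follows from the entirety of $E_{\alpha,\beta-r}$, whose coefficients decay factorially fast. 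These estimates permit commuting both the integration and the finite-order differentiation past the sum, thereby legitimizing the formal computation above and completing the argument.
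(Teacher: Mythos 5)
Your proof is correct, and it is essentially the canonical argument: the paper itself offers no proof of this proposition (it is quoted as a background fact, citing Equation (1.82) of Podlubny), and the derivation there is exactly your computation---expand $E^{(k)}_{\alpha,\beta}$ as a power series, apply the Riemann--Liouville power rule term by term to the monomials $x^{\alpha(i+k)+\beta-1}$ (all with exponent real part exceeding $-1$), and observe that the factor $\Gamma(\alpha(i+k)+\beta)$ cancels, shifting the second parameter to $\beta-r$. Your closing justification of the interchange of $\prescript{}{0}{\mathrm{D}}^{r}_{x}$ with the infinite sum (local uniform convergence of the original and of the termwise-differentiated series, the latter because $1/\Gamma(\alpha(i+k)+\beta-r)$ decays super-exponentially) is the right way to make the formal step rigorous.
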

%
\begin{proposition}
	The left Riemann-Liouville fractional derivatives $\prescript{}{0} {\mathrm{D}}^{r}_{x}$ of an integral depending on a parameter $t \in \mathbb{R}$ is given by (see Equation (2.212) in \cite{podlubny1998fractional})
	\begin{equation*}
	\prescript{}{0} {\mathrm{D}}^{r}_{x} \int_0^x K(x,t)\, \mathrm{d}t=\int_0^\infty \prescript{}{t} {\mathrm{D}}^{r}_{x}K(x,t)\,\mathrm{d}t+\lim\limits_{t\rightarrow x-0}\prescript{}{t} {\mathrm{D}}^{r-1}_{x} K(x,t).
	\end{equation*}
\end{proposition}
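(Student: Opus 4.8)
The plan is to reduce the statement to the classical Leibniz rule for differentiating a parameter integral with a variable upper limit, after first rewriting the left Riemann--Liouville derivative through its definition (Definition \ref{Def_RLD}) as an integer-order derivative of a fractional integral. Write $F(x)=\int_0^x K(x,t)\,\mathrm{d}t$ and set $n=\lfloor r\rfloor+1$. By Definition \ref{Def_RLD},
\[
\prescript{}{0}{\mathrm{D}}_x^{r}F(x)=\frac{1}{\Gamma(n-r)}\frac{\mathrm{d}^n}{\mathrm{d}x^n}\int_0^x (x-\xi)^{n-r-1}F(\xi)\,\mathrm{d}\xi .
\]
Substituting $F(\xi)=\int_0^\xi K(\xi,t)\,\mathrm{d}t$ and interchanging the order of integration by Dirichlet's formula over the triangle $\{0\le t\le \xi\le x\}$ turns the inner integral into a left Riemann--Liouville fractional integral (Definition \ref{Def_RLI}) in the first variable:
\[
\prescript{}{0}{\mathrm{D}}_x^{r}F(x)=\frac{\mathrm{d}^n}{\mathrm{d}x^n}\int_0^x P(x,t)\,\mathrm{d}t,\qquad P(x,t):=\prescript{}{t}{\mathrm{I}}_x^{\,n-r}K(x,t).
\]

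Next I would apply the iterated classical Leibniz rule to $\frac{\mathrm{d}^n}{\mathrm{d}x^n}\int_0^x P(x,t)\,\mathrm{d}t$, which produces one volume term plus $n$ diagonal boundary contributions,
\[
\frac{\mathrm{d}^n}{\mathrm{d}x^n}\int_0^x P(x,t)\,\mathrm{d}t=\int_0^x \partial_x^{\,n}P(x,t)\,\mathrm{d}t+\sum_{m=0}^{n-1}\frac{\mathrm{d}^m}{\mathrm{d}x^m}\Bigl[\bigl(\partial_x^{\,n-1-m}P\bigr)(x,t)\big|_{t=x}\Bigr].
\]
Using that $\partial_x^{\,j}\prescript{}{t}{\mathrm{I}}_x^{\,n-r}=\prescript{}{t}{\mathrm{D}}_x^{\,r-(n-j)}$ for $1\le j\le n$ (valid since $r-n\in(-1,0)$ for non-integer $r$; this is the left-inverse/semigroup property of Proposition \ref{rlsemi} read off the definition), the volume term $j=n$ becomes exactly $\int_0^x \prescript{}{t}{\mathrm{D}}_x^{\,r}K(x,t)\,\mathrm{d}t$, the first term on the right-hand side of the claim, while the $m$-th boundary term equals $\frac{\mathrm{d}^m}{\mathrm{d}x^m}\bigl[\prescript{}{t}{\mathrm{D}}_x^{\,r-1-m}K(x,t)\big|_{t=x}\bigr]$.

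The crux — and the step I expect to be the main obstacle — is to show that all of these boundary terms vanish except the one coming from $m=0$, which is precisely $\lim_{t\to x-0}\prescript{}{t}{\mathrm{D}}_x^{\,r-1}K(x,t)$. For the indices $m$ with $r-1-m<0$ the operator $\prescript{}{t}{\mathrm{D}}_x^{\,r-1-m}=\prescript{}{t}{\mathrm{I}}_x^{\,1+m-r}$ is a fractional integral of positive order over the degenerate interval $[t,x]$ at $t=x$, so by the integrability of the kernel singularity (cf. the power-function formula of Proposition \ref{mlfifd}) it is identically zero in $x$, whence all its $x$-derivatives vanish. The delicate cases are those with $0<r-1-m$, which only appear once $r>2$; they are genuine fractional derivatives of $K$ evaluated on the diagonal, and I would dispatch them by imposing the natural regularity hypotheses on $K$ (continuity of $K$ and of the relevant partials $\partial_x^{\,j}K$, together with the appropriate one-sided vanishing as $\xi\to x^-$) under which both Dirichlet's interchange and the repeated differentiation under the integral sign are legitimate and under which each such diagonal term vanishes. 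Assembling the volume term with the single surviving $m=0$ contribution then yields the stated identity, the justification of the interchanges of summation, integration and differentiation being the routine technical remainder.
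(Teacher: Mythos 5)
The first thing to note is that the paper does not prove this proposition at all: it is quoted from Podlubny (Eq.\ (2.212)), where it is derived under the restriction $0<r<1$, and where the upper limit of the right-hand integral is $x$, not $\infty$ (a typo in the paper that you silently, and correctly, repaired by working with $\int_0^x$). In the regime $0<r<1$, i.e.\ $n=1$, your argument is correct and is in fact exactly the source's derivation: the definition of $\prescript{}{0}{\mathrm{D}}_x^{r}$, the Dirichlet interchange over $\{0\le t\le \xi\le x\}$, a single application of the classical Leibniz rule to $\frac{\mathrm{d}}{\mathrm{d}x}\int_0^x P(x,t)\,\mathrm{d}t$ with $P=\prescript{}{t}{\mathrm{I}}_x^{1-r}K$, and the identifications $\partial_x P=\prescript{}{t}{\mathrm{D}}_x^{r}K$ and $P=\prescript{}{t}{\mathrm{D}}_x^{r-1}K$ yield precisely the volume term plus the single diagonal limit. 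Your iterated Leibniz formula and the identification $\partial_x^{\,j}\,\prescript{}{t}{\mathrm{I}}_x^{\,n-r}=\prescript{}{t}{\mathrm{D}}_x^{\,r-(n-j)}$ for non-integer $r$ are also correct, although the latter is just Definition \ref{Def_RLD} applied with order $r-n+j\in(j-1,j)$ rather than a consequence of Proposition \ref{rlsemi}.

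The genuine gap is your plan for $r>1$. The extra diagonal terms $\frac{\mathrm{d}^m}{\mathrm{d}x^m}\bigl[\prescript{}{t}{\mathrm{D}}_x^{\,r-1-m}K(x,t)\big|_{t=x}\bigr]$ with $1\le m\le n-1$ and $r-1-m>0$ cannot be dispatched by ``natural regularity hypotheses on $K$'': the two-term identity is simply false for $r>1$, even for the smoothest possible kernel. Take $K\equiv 1$ and $r\in(1,2)$: the left-hand side is $\prescript{}{0}{\mathrm{D}}_x^{r}\,x=x^{1-r}/\Gamma(2-r)$, finite for $x>0$, while on the right-hand side both $\int_0^x\prescript{}{t}{\mathrm{D}}_x^{r}1\,\mathrm{d}t=\frac{1}{\Gamma(1-r)}\int_0^x(x-t)^{-r}\,\mathrm{d}t$ and $\lim_{t\to x-0}\prescript{}{t}{\mathrm{D}}_x^{r-1}1=\lim_{t\to x-0}(x-t)^{1-r}/\Gamma(2-r)$ diverge, so the decomposition degenerates to an $\infty-\infty$ expression; correspondingly, your differentiation under the integral sign producing $\partial_x^{\,n}P=\prescript{}{t}{\mathrm{D}}_x^{r}K$ is not legitimate there, because that function is not integrable in $t$ up to the diagonal. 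Imposing one-sided vanishing of $K$ strong enough to cure this would also annihilate the $m=0$ limit term, i.e.\ it changes the statement rather than proving it. The honest resolutions are either to restrict the proposition to $0<r<1$ — which is Podlubny's standing hypothesis — or to assert your intermediate $n$-term formula with the full boundary sum, which is the correct generalization.
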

%
\begin{proposition}\label{rldci}
	The left Riemann-Liouville fractional derivatives $\prescript{}{0} {\mathrm{D}}^{r}_{x}$ of the (positive density) convolution integral equals to (see Equation (2.213) in \cite{podlubny1998fractional})
	\begin{equation*}
	\prescript{}{0} {\mathrm{D}}^{r}_{x} \left[K*f\right](x)= \left[\prescript{}{0} {\mathrm{D}}^{r}_{t}K*f\right](t)+\lim\limits_{t\rightarrow +0}f(x-t)\,\prescript{}{0} {\mathrm{D}}^{r-1}_{t}K(t).
	\end{equation*}
\end{proposition}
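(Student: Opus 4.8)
The plan is to derive this identity as the convolution specialisation of the preceding proposition, the Leibniz-type rule for the left Riemann--Liouville derivative of a parameter integral (Equation (2.212)). First I would write the convolution with the $x$-dependence carried entirely by $K$, namely $[K*f](x)=\int_0^x K(x-y)\,f(y)\,\mathrm{d}y$, and introduce $\Phi(x,y)\defeq K(x-y)\,f(y)$. Applying the preceding proposition to $\prescript{}{0}{\mathrm{D}}^r_x\int_0^x\Phi(x,y)\,\mathrm{d}y$ with integration variable $y$, and pulling the $x$-independent factor $f(y)$ outside each operator, yields
\begin{equation*}
\prescript{}{0}{\mathrm{D}}^r_x[K*f](x)=\int_0^\infty f(y)\,\prescript{}{y}{\mathrm{D}}^r_x K(x-y)\,\mathrm{d}y+\lim_{y\to x-0}f(y)\,\prescript{}{y}{\mathrm{D}}^{r-1}_x K(x-y).
\end{equation*}

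The key step is a translation-invariance identity for the left Riemann--Liouville derivative: for any order $s$, substituting $\xi'=\xi-y$ in Definition \ref{Def_RLD} shows that $\prescript{}{y}{\mathrm{D}}^s_x K(x-y)=\left.\prescript{}{0}{\mathrm{D}}^s_t K(t)\right|_{t=x-y}$, since shifting both the lower limit and the integration variable by $y$ leaves the kernel $(x-\xi)^{\,n-s-1}$ and the outer $n$-fold $x$-derivative unchanged. Because $f$ is a density supported on $[0,\infty)$, the factor $K(x-y)$ vanishes for $y>x$, so the first integral truncates to $\int_0^x f(y)\,[\prescript{}{0}{\mathrm{D}}^r K](x-y)\,\mathrm{d}y=[(\prescript{}{0}{\mathrm{D}}^r K)*f](x)$, which is the first term on the right-hand side. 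Substituting $t=x-y$ in the boundary term (so that $y\to x-0$ becomes $t\to+0$) turns it into $\lim_{t\to+0}f(x-t)\,\prescript{}{0}{\mathrm{D}}^{r-1}_t K(t)$, matching the claimed formula exactly.

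I expect the main obstacle to be the careful justification of the translation-invariance identity together with the differentiation-under-the-integral step that is already built into the preceding proposition. In particular, one must verify that $K$ is regular enough for $\prescript{}{0}{\mathrm{D}}^{r-1}_t K(t)$ to possess a finite one-sided limit as $t\to+0$, so that the boundary term is well defined, and confirm that the support of the density $f$ legitimately collapses the outer integral from $[0,\infty)$ to $[0,x]$. Once these regularity points are in place, everything else is bookkeeping resting on the preceding proposition and Definition \ref{Def_RLD}.
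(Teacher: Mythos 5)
The paper never proves this proposition: it is stated as a background fact quoted directly from Podlubny (Equation (2.213) of \cite{podlubny1998fractional}), so there is no internal proof to compare against. Your derivation is correct and is essentially the canonical one used in the cited source itself: specialize the preceding parameter-integral rule (Equation (2.212)) to the kernel $K(x-y)\,f(y)$, pull the $x$-independent factor $f(y)$ through the operators, invoke translation invariance of $\prescript{}{y}{\mathrm{D}}^{s}_{x}$ applied to functions of $x-y$, and substitute $t=x-y$ in the boundary term. One minor slip to correct: the outer integral truncates from $[0,\infty)$ to $[0,x]$ because $K$ — not $f$ — is supported on $[0,\infty)$, so that $K(x-y)=0$ for $y>x$; with that attribution fixed, your argument stands, including your apt remark that one needs enough regularity on $K$ for $\prescript{}{0}{\mathrm{D}}^{r-1}_{t}K(t)$ to possess a finite limit as $t\rightarrow +0$.
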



\begin{definition}\label{Def_WLD}
	The \emph{Weyl-Liouville fractional derivatives} \cite{samko1993fractional, butzer2000introduction} are special cases of the Riemann-Liouville derivatives, whenever $a$ is replaced by $-\infty$ or $b$ is replaced by $\infty$ in Definition \ref{Def_RLD}. The right Weyl-Liouville fractional derivative is defined for functions $f\in L^{\lceil r\rceil}([a,b])$ as
	\begin{equation*}
	\prescript{}{x} {\mathrm{D}}^{r}_{\infty}f(x)=(-1)^n\frac{1}{\Gamma(n-r)}\frac{\mathrm{d}^n}{\mathrm{d}x^n}\int_{x}^{\infty}(y-x)^{n-r-1}f(y)\,\mathrm{d}y, \quad n=\lfloor r \rfloor+1.
	\end{equation*}
\end{definition}
%
%

\begin{definition}\label{Def_Caputo}
	The \emph{Caputo fractional derivatives} are defined as fractional integrals on integer-order derivatives. The right Caputo fractional derivative is defined on functions $f\in L^{\lceil r\rceil}([a,b])$ as
	\begin{equation*}\label{crfd}
	\prescript{\mathrm{C}}{x} {\mathrm{D}}^{r}_{b}f(x)=\frac{1}{\Gamma(n-r)}\int_{x}^{b}(y-x)^{n-r-1}f^{(n)}(y)\,\mathrm{d}y,\quad x<b, \quad n=\lfloor r \rfloor+1.
	\end{equation*}
\end{definition}

\begin{proposition}\label{caputosemi}
	The Caputo fractional derivatives are the left inverse operators of their corresponding fractional integrals (see Section 3.2 in \cite{valerio2013fractional})
	\begin{equation*}
	\prescript{\mathrm{C}}{a} {\mathrm{D}}^{r}_{x} \prescript{}{a} {\mathrm{I}}^{r}_{x}f(x)=f(x)\quad \text{and}\quad
	\prescript{\mathrm{C}}{x} {\mathrm{D}}^{r}_{b} \prescript{}{x} {\mathrm{I}}^{r}_{b} f(x)=f(x), \quad \text{for}\,\, r\in\mathbb{N} \,\,\text{or}\,\, \Re(r)\notin \mathbb{N}.
	\end{equation*}
\end{proposition}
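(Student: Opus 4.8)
The plan is to establish the left-sided identity $\prescript{\mathrm{C}}{a} {\mathrm{D}}^{r}_{x}\,\prescript{}{a} {\mathrm{I}}^{r}_{x}f=f$, working from the left analogue of Definition \ref{Def_Caputo}, namely $\prescript{\mathrm{C}}{a} {\mathrm{D}}^{r}_{x}=\prescript{}{a} {\mathrm{I}}^{n-r}_{x}\circ \frac{\mathrm{d}^n}{\mathrm{d}x^n}$ with $n=\lfloor r\rfloor+1$; the right-sided identity then follows by the reflection $x\mapsto a+b-x$, which swaps the left and right operator families. For $r\in\mathbb{N}$ the statement is the classical fundamental theorem of calculus, since the Caputo derivative of integer order is $\frac{\mathrm{d}^r}{\mathrm{d}x^r}$ and $\prescript{}{a} {\mathrm{I}}^{r}_{x}$ is $r$-fold iterated integration; so I assume $\Re(r)\notin\mathbb{N}$, whence $n-r\in(0,1)$.

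The first step is to evaluate $\frac{\mathrm{d}^n}{\mathrm{d}x^n}\,\prescript{}{a} {\mathrm{I}}^{r}_{x}f$. By the semigroup property of the Riemann--Liouville integral I split $\prescript{}{a} {\mathrm{I}}^{r}_{x}=\prescript{}{a} {\mathrm{I}}^{n-1}_{x}\,\prescript{}{a} {\mathrm{I}}^{r-n+1}_{x}$, both orders being positive since $r-n+1\in(0,1)$. Applying $\frac{\mathrm{d}^n}{\mathrm{d}x^n}$ and using $\frac{\mathrm{d}^{n-1}}{\mathrm{d}x^{n-1}}\,\prescript{}{a} {\mathrm{I}}^{n-1}_{x}=\mathrm{Id}$ on sufficiently regular functions collapses $n-1$ of the integrations, leaving $\frac{\mathrm{d}}{\mathrm{d}x}\,\prescript{}{a} {\mathrm{I}}^{r-n+1}_{x}f$, which is precisely the Riemann--Liouville derivative $\prescript{}{a} {\mathrm{D}}^{n-r}_{x}f$ of order $n-r$.

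It then remains to show $\prescript{}{a} {\mathrm{I}}^{n-r}_{x}\,\prescript{}{a} {\mathrm{D}}^{n-r}_{x}f=f$. The Riemann--Liouville integral inverts the Riemann--Liouville derivative only up to a boundary term, which for order $\beta=n-r\in(0,1)$ is $\frac{(x-a)^{\beta-1}}{\Gamma(\beta)}\,[\prescript{}{a} {\mathrm{I}}^{1-\beta}_{x}f](a)$. The key point is that $1-\beta=r-n+1>0$, so $\prescript{}{a} {\mathrm{I}}^{1-\beta}_{x}f$ is a \emph{positive-order} fractional integral and thus vanishes at the lower limit $x=a$ for any locally integrable $f$; the boundary term drops out and the identity follows. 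Equivalently, one may use the standard relation $\prescript{\mathrm{C}}{a} {\mathrm{D}}^{r}_{x}g=\prescript{}{a} {\mathrm{D}}^{r}_{x}\bigl[g-\sum_{k=0}^{n-1}g^{(k)}(a)(x-a)^k/k!\bigr]$ together with the observation that $g=\prescript{}{a} {\mathrm{I}}^{r}_{x}f$ has $g^{(k)}(a)=[\prescript{}{a} {\mathrm{I}}^{r-k}_{x}f](a)=0$ for $0\le k\le n-1$ (again because $r-k>0$), so that Caputo coincides with Riemann--Liouville on $g$ and Proposition \ref{rlsemi} finishes the argument.

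I expect the main obstacle to be analytic rather than algebraic: each manipulation --- the semigroup identity, the cancellation $\frac{\mathrm{d}^{n-1}}{\mathrm{d}x^{n-1}}\,\prescript{}{a} {\mathrm{I}}^{n-1}_{x}=\mathrm{Id}$, and the vanishing of the boundary term --- presupposes enough regularity on $f$ to differentiate under the integral and to evaluate fractional integrals at the endpoint. The precise statement is that the identity holds for $f$ in the class where all intermediate operators are defined and the fractional integrals of $f$ extend continuously to the endpoint; on the Lebesgue spaces $L^{\lceil r\rceil}$ of Definition \ref{Def_RLD} these hypotheses are satisfied, and this is the point at which Fubini's theorem (for the semigroup property) and the integrability of $f$ (for the boundary term) must be invoked carefully.
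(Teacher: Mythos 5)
The paper offers no proof of this proposition: it is stated in Appendix~\ref{bkfc} as a background fact, with the work deferred to Section 3.2 of \cite{valerio2013fractional}. So there is no in-paper argument to compare against; what can be assessed is the internal correctness of your proposal, and on that score it is essentially right. Both of your routes are the standard ones: (i) write $\prescript{\mathrm{C}}{a}{\mathrm{D}}^{r}_{x}=\prescript{}{a}{\mathrm{I}}^{n-r}_{x}\circ\frac{\mathrm{d}^n}{\mathrm{d}x^n}$, use the semigroup property and $\frac{\mathrm{d}^{n-1}}{\mathrm{d}x^{n-1}}\prescript{}{a}{\mathrm{I}}^{n-1}_{x}=\mathrm{Id}$ to obtain $\frac{\mathrm{d}^n}{\mathrm{d}x^n}\prescript{}{a}{\mathrm{I}}^{r}_{x}f=\prescript{}{a}{\mathrm{D}}^{n-r}_{x}f$, then invoke the composition formula for $\prescript{}{a}{\mathrm{I}}^{\beta}_{x}\prescript{}{a}{\mathrm{D}}^{\beta}_{x}$ with $\beta=n-r$; or (ii) use the Caputo--Riemann--Liouville relation through the Taylor remainder and note that $g=\prescript{}{a}{\mathrm{I}}^{r}_{x}f$ has $g^{(k)}(a)=[\prescript{}{a}{\mathrm{I}}^{r-k}_{x}f](a)=0$ for $k\le n-1$, after which Proposition~\ref{rlsemi} finishes. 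Route (ii) is the cleaner choice here precisely because it reduces the claim to a proposition the paper already states.

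Two caveats. First, your assertion that $\prescript{}{a}{\mathrm{I}}^{1-\beta}_{x}f$ ``vanishes at the lower limit for any locally integrable $f$'' is false as stated: for $f(y)=(y-a)^{-\delta}$ with $1-\beta<\delta<1$ one has $f\in L^1_{\mathrm{loc}}$ but $\prescript{}{a}{\mathrm{I}}^{1-\beta}_{x}f(x)=C\,(x-a)^{1-\beta-\delta}\to\infty$ as $x\to a^{+}$. Your closing paragraph effectively repairs this by restricting to functions whose fractional integrals extend continuously to the endpoint (e.g.\ $f$ bounded near $a$, or $f$ in the function class of Definition~\ref{Def_RLD}); that hypothesis should be imposed from the start rather than claimed for all of $L^1_{\mathrm{loc}}$. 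Second, your reflection $x\mapsto a+b-x$ carries the left Caputo derivative onto the \emph{standard} right Caputo derivative, which contains a factor $(-1)^n$; the paper's Definition~\ref{Def_Caputo} omits that factor (compare the $(-1)^n$ present in Definition~\ref{Def_RLD}). Taken literally, the paper's right-sided identity then fails by a sign for odd $n$: with $r\in(0,1)$, $n=1$, $f\equiv 1$, one computes $\prescript{}{x}{\mathrm{I}}^{r}_{b}f(x)=(b-x)^{r}/\Gamma(r+1)$ and, with the paper's definition, $\prescript{\mathrm{C}}{x}{\mathrm{D}}^{r}_{b}\prescript{}{x}{\mathrm{I}}^{r}_{b}f(x)=-\frac{1}{\Gamma(1-r)\Gamma(r)}B(1-r,r)=-1=-f(x)$. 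This is almost certainly a typographical slip in the paper's definition rather than a flaw in your argument, but it is worth flagging: your reflection step is valid for the standard definition, and only for that definition is the stated identity correct.
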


\begin{proposition}\label{PropFIBP}
	The Caputo and left Riemann-Liouville fractional derivatives are related by the following integration by parts formula (see Section 2.1 in \cite{almeida2011necessary})
	\begin{align}\label{FIBP}
	\nonumber\int_{a}^{b}g(x)&\prescript{\mathrm{C}}{x} {\mathrm{D}}^{r}_{b}f(x)\,\mathrm{d}x=\int_{a}^{b}f(x)\,\prescript{}{a} {\mathrm{D}}^{r}_{x}g(x)\,\mathrm{d}x\\
	+&\sum_{j=0}^{\left \lfloor{r}\right \rfloor}\left[ (-1)^{\left \lfloor{r}\right \rfloor+1+j}\left(\prescript{}{a} {\mathrm{D}}_x^{r+j-\left \lfloor{r}\right \rfloor-1}g(x)\right)\left(\prescript{}{a} {\mathrm{D}}_x^{\left \lfloor{r}\right \rfloor-j}f(x)\right)\right]^b_a.
	\end{align}

\end{proposition}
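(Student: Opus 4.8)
The plan is to peel the statement down to the classical, integer-order integration by parts by first trading the right Caputo derivative for a single fractional integral acting on an ordinary derivative. With $n=\lfloor r\rfloor+1$ as in Definition \ref{Def_Caputo}, the right Caputo derivative is exactly the right Riemann--Liouville fractional integral of order $n-r$ applied to $f^{(n)}$; carrying the sign $(-1)^n$ that the right-sided convention attaches to such operators (the same $(-1)^n$ that already appears in the right Riemann--Liouville derivative of Definition \ref{Def_RLD}), we may write $\prescript{\mathrm{C}}{x} {\mathrm{D}}^{r}_{b}f(x)=(-1)^n\,\prescript{}{x} {\mathrm{I}}^{n-r}_{b}f^{(n)}(x)$ in the notation of Definition \ref{Def_RLI}. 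Substituting this into the left-hand side of \eqref{FIBP} reduces the whole problem to evaluating $(-1)^n\int_a^b g(x)\,\prescript{}{x} {\mathrm{I}}^{n-r}_{b}f^{(n)}(x)\,\mathrm{d}x$.

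First I would move the fractional integral off $f^{(n)}$ and onto $g$. This is the only genuinely fractional manipulation, and it generates no boundary contributions: by Fubini's theorem on the triangle $\{a\leqslant x\leqslant y\leqslant b\}$ one has the adjoint identity $\int_a^b g(x)\,\prescript{}{x} {\mathrm{I}}^{\beta}_{b}h(x)\,\mathrm{d}x=\int_a^b h(x)\,\prescript{}{a} {\mathrm{I}}^{\beta}_{x}g(x)\,\mathrm{d}x$, valid for every $\beta>0$. Taking $\beta=n-r$ and $h=f^{(n)}$ turns the left-hand side of \eqref{FIBP} into $(-1)^n\int_a^b f^{(n)}(x)\,G(x)\,\mathrm{d}x$, where $G(x)\defeq\prescript{}{a} {\mathrm{I}}^{n-r}_{x}g(x)$ now carries all of the fractional weight.

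Next I would integrate by parts $n$ times in the ordinary sense, transferring one derivative at a time from $f^{(n)}$ onto $G$. The interior integral becomes $(-1)^n(-1)^n\int_a^b f(x)\,G^{(n)}(x)\,\mathrm{d}x$, and since $G^{(n)}=\frac{\mathrm{d}^n}{\mathrm{d}x^n}\prescript{}{a} {\mathrm{I}}^{n-r}_{x}g=\prescript{}{a} {\mathrm{D}}^{r}_{x}g$ by Definition \ref{Def_RLD}, the two signs cancel and this reproduces exactly the first term $\int_a^b f(x)\,\prescript{}{a} {\mathrm{D}}^{r}_{x}g(x)\,\mathrm{d}x$ on the right of \eqref{FIBP}. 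The accompanying boundary contributions are $(-1)^n\sum_{k=0}^{n-1}(-1)^k\big[f^{(n-1-k)}(x)\,G^{(k)}(x)\big]_a^b$. Here I would invoke the semigroup property of the fractional integrals together with Proposition \ref{rlsemi} to identify, for $0\leqslant k\leqslant n-1$, the intermediate derivative $G^{(k)}=\frac{\mathrm{d}^k}{\mathrm{d}x^k}\prescript{}{a} {\mathrm{I}}^{n-r}_{x}g=\prescript{}{a} {\mathrm{D}}^{r-n+k}_{x}g$ (a genuine fractional integral when $k=0$), together with the integer-order factor $f^{(n-1-k)}=\prescript{}{a} {\mathrm{D}}^{n-1-k}_{x}f$. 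Identifying the summation index $j$ with $k$ and recalling $n=\lfloor r\rfloor+1$, the $g$-slot has order $r-n+k=r+j-\lfloor r\rfloor-1$, the $f$-slot has order $\lfloor r\rfloor-j$, and the overall sign is $(-1)^{n+k}=(-1)^{\lfloor r\rfloor+1+j}$ --- precisely the summand displayed in \eqref{FIBP}.

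I expect the only real difficulty to be bookkeeping rather than ideas, concentrated in two places. The first is the sign: one must combine the $(-1)^n$ coming from the right-sided Caputo convention with the alternating $(-1)^k$ produced by the successive integrations by parts, and check that they cancel on the leading integral while collapsing to the stated $(-1)^{\lfloor r\rfloor+1+j}$ in the boundary sum, with the orders $r-n+k$ and $n-1-k$ simultaneously landing in the slots written in \eqref{FIBP}. The second is pinning down the ``appropriate functions'': Fubini requires $(x,y)\mapsto g(x)\,(y-x)^{n-r-1}f^{(n)}(y)$ to be integrable on the triangle, and the repeated integration by parts requires $f\in C^{n}([a,b])$ (or at least $f^{(n)}\in L^1$) together with enough regularity of $g$ that $\prescript{}{a} {\mathrm{I}}^{n-r}_{x}g$ is $n$ times differentiable with finite one-sided limits at $a$ and $b$, so that each bracket $[\,\cdot\,]_a^b$ is well defined.
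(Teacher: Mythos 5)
Your proof is correct; the relevant comparison here is that the paper does not prove Proposition \ref{PropFIBP} at all --- it imports it as a background fact from Section 2.1 of \cite{almeida2011necessary} --- so yours is the only actual argument on the table. Your route is the natural one and it reproduces \eqref{FIBP} exactly: write the right Caputo derivative as a right fractional integral acting on $f^{(n)}$, move that integral onto $g$ by Fubini (correctly, with no boundary contributions), perform $n$ classical integrations by parts, and identify $\frac{\mathrm{d}^k}{\mathrm{d}x^k}\prescript{}{a}{\mathrm{I}}^{n-r}_{x}g=\prescript{}{a}{\mathrm{D}}^{r+k-n}_{x}g$, so that the boundary sum lands on the stated orders $r+j-\lfloor r\rfloor-1$ and $\lfloor r\rfloor-j$ with sign $(-1)^{\lfloor r\rfloor+1+j}$. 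Beyond self-containment, your write-up supplies what the citation leaves implicit: the precise regularity hypotheses (integrability of the kernel on the triangle for Fubini; $f\in C^{n}$ or at least $f^{(n)}\in L^1$; $n$-fold differentiability and finite one-sided limits of $\prescript{}{a}{\mathrm{I}}^{n-r}_{x}g$), which is exactly what the paper's phrase ``appropriate functions'' is hiding.

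One step of yours that might look like a fudge is in fact a necessary correction, and you should flag it as such: you insert the factor $(-1)^n$ into the right Caputo derivative, while the paper's Definition \ref{Def_Caputo} as printed has no such factor. Under the literal Definition \ref{Def_Caputo}, your computation shows the main term on the right of \eqref{FIBP} would come out as $(-1)^{\lfloor r\rfloor+1}\int_a^b f(x)\,\prescript{}{a}{\mathrm{D}}^{r}_{x}g(x)\,\mathrm{d}x$, and the proposition as stated would be false; with the standard convention $\prescript{\mathrm{C}}{x}{\mathrm{D}}^{r}_{b}f=(-1)^n\,\prescript{}{x}{\mathrm{I}}^{n-r}_{b}f^{(n)}$ that you adopt, \eqref{FIBP} holds exactly as written. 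That this is the convention the paper actually intends is corroborated elsewhere: Proposition \ref{effd} assigns $\prescript{\mathrm{C}}{x}{\mathrm{D}}^{r}_{\infty}$ the eigenvalue $\lambda^r$ on $e^{-\lambda x}$ (without the $(-1)^n$ it would be $(-1)^n\lambda^r$), and the characteristic equation \eqref{cetimeclaim} relies on precisely that eigenvalue relation. So Definition \ref{Def_Caputo} carries a sign typo, and your proof is the one consistent with Proposition \ref{PropFIBP} and with the rest of the paper.
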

%

\begin{proposition}\label{efrlfd}
	The eigenfunction of left fractional derivative $\prescript{}{0} {\mathrm{D}}^{r}_{x}$ (or $\prescript{\mathrm{C}}{0} {\mathrm{D}}^{r}_{x}$) is $x^{1-\alpha}E_{\alpha,\alpha}(\lambda x^\alpha)$ with eigenvalue $\lambda\in\mathbb{R}$ (see Section 2.2.3 in \cite{hilfer2008threefold}).
\end{proposition}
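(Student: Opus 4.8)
The plan is to read the eigenrelation directly off the differentiation formula for two-parameter Mittag-Leffler functions recorded earlier in the appendix (Equation (1.82) of \cite{podlubny1998fractional}), after matching the order of differentiation to the Mittag-Leffler index. Here the relevant function is $\phi(x)=x^{\alpha-1}E_{\alpha,\alpha}(\lambda x^\alpha)$, the same form as the Mittag-Leffler density $f_\mu$ introduced above, and I take the derivative order equal to $\alpha$. Applying that formula with $k=0$, second index $\beta=\alpha$, and order $\alpha$ gives immediately
\begin{equation*}
\prescript{}{0}{\mathrm{D}}^{\alpha}_{x}\bigl[x^{\alpha-1}E_{\alpha,\alpha}(\lambda x^\alpha)\bigr]=x^{-1}E_{\alpha,0}(\lambda x^\alpha),
\end{equation*}
so the whole problem reduces to identifying the right-hand side.

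The key algebraic step is the series identity $E_{\alpha,0}(z)=z\,E_{\alpha,\alpha}(z)$. I would prove it directly from the defining series \eqref{TPML}: in $E_{\alpha,0}(z)=\sum_{k\ge 0}z^k/\Gamma(\alpha k)$ the $k=0$ term carries the factor $1/\Gamma(0)=0$ (the reciprocal Gamma function is entire and vanishes at the non-positive integers), so it drops out, and reindexing the remainder by $j=k-1$ yields $\sum_{j\ge 0}z^{j+1}/\Gamma(\alpha j+\alpha)=z\,E_{\alpha,\alpha}(z)$. Substituting $z=\lambda x^\alpha$ then turns the display above into $\lambda\,x^{\alpha-1}E_{\alpha,\alpha}(\lambda x^\alpha)=\lambda\,\phi(x)$, which is exactly the asserted eigenrelation with eigenvalue $\lambda$.

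For the Caputo case I would either cite the stated result of \cite{hilfer2008threefold} directly, or pass through the standard identity relating $\prescript{\mathrm{C}}{0}{\mathrm{D}}^{\alpha}_{x}$ and $\prescript{}{0}{\mathrm{D}}^{\alpha}_{x}$ by their difference of initial-value boundary terms. I expect this to be the only delicate point: for $\alpha<1$ the function $\phi$ is singular at the origin, so the boundary contribution separating the two derivatives must be examined rather than assumed to vanish, and one must restrict to the class of functions on which $\prescript{\mathrm{C}}{0}{\mathrm{D}}^{\alpha}_{x}$ acts meaningfully. The Riemann-Liouville half, by contrast, is a one-line consequence of the two facts above, with the only bookkeeping being the correct matching of the Mittag-Leffler indices.
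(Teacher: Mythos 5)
The paper itself offers no proof of Proposition \ref{efrlfd}: it is recorded as a background fact and delegated entirely to Section 2.2.3 of \cite{hilfer2008threefold}, so your derivation is strictly more than what the paper does, and the Riemann--Liouville half of it is correct. Your route makes the appendix self-contained: the Mittag-Leffler differentiation formula recorded just above the proposition (Equation (1.82) of \cite{podlubny1998fractional}), specialised to $k=0$, $\beta=\alpha$ and derivative order $\alpha$, gives $\prescript{}{0}{\mathrm{D}}^{\alpha}_{x}\left[x^{\alpha-1}E_{\alpha,\alpha}(\lambda x^{\alpha})\right]=x^{-1}E_{\alpha,0}(\lambda x^{\alpha})$, and your series identity $E_{\alpha,0}(z)=z\,E_{\alpha,\alpha}(z)$ closes the computation; your justification via $1/\Gamma(0)=0$ is the right one, and is genuinely needed since $\beta=0$ falls outside the range $\Re(\beta)>0$ allowed by the paper's definition \eqref{TPML}. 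Two further points. First, you have tacitly corrected a typo in the statement: the eigenfunction must be $x^{\alpha-1}E_{\alpha,\alpha}(\lambda x^{\alpha})$, not $x^{1-\alpha}E_{\alpha,\alpha}(\lambda x^{\alpha})$; your form is the one consistent with the Mittag-Leffler density $f_\mu$ and with Equation \eqref{dde}, which is precisely your eigenrelation with $\lambda$ replaced by $-\lambda$. Second, your hesitation over the Caputo parenthetical is well placed, and in fact no boundary-term bookkeeping can rescue it: for $0<\alpha<1$ the derivative of $x^{\alpha-1}E_{\alpha,\alpha}(\lambda x^{\alpha})$ behaves like $\frac{\alpha-1}{\Gamma(\alpha)}\,x^{\alpha-2}$ near the origin, which is not locally integrable, so this function is not in the classical domain of $\prescript{\mathrm{C}}{0}{\mathrm{D}}^{\alpha}_{x}$ at all; in \cite{hilfer2008threefold} the Caputo operator has the distinct eigenfunction $E_{\alpha}(\lambda x^{\alpha})$, and the parenthetical in the proposition is best read as referring to that separate fact rather than to the same function. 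Since the paper invokes Proposition \ref{efrlfd} only for contrast in Section \ref{subsec_cem} (the ansatz $e^{-zu}$ rests on Proposition \ref{effd}), neither the typo nor the Caputo caveat propagates into the main results.
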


\begin{proposition}\label{effd}
	The eigenfunction of right fractional derivative $\prescript{}{x} {\mathrm{D}}^{r}_{\infty}$ (or $\prescript{\mathrm{C}}{x} {\mathrm{D}}^{r}_{\infty}$) is $e^{-\lambda x}$ with eigenvalue $\lambda^r$, where $\lambda\in\mathbb{R}^+$ (see Section 4 in \cite{valerio2013fractional}).
\end{proposition}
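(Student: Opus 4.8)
The plan is to verify the claim by direct substitution: I would insert the candidate eigenfunction $f(x)=e^{-\lambda x}$ into each of the two definitions (Definition \ref{Def_WLD} for the right Weyl-Liouville derivative and Definition \ref{Def_Caputo} with $b=\infty$ for the right Caputo derivative) and evaluate the resulting integrals in closed form. In both cases the single analytic ingredient is the Gamma integral $\int_0^\infty u^{n-r-1}e^{-\lambda u}\,\mathrm{d}u=\Gamma(n-r)\,\lambda^{-(n-r)}$, with $n=\lfloor r\rfloor+1$. The restriction $\lambda\in\reals^+$ is precisely the convergence condition for this integral at $u=\infty$, while integrability at $u=0$ is automatic because $n-r\in(0,1]$ forces the exponent $n-r-1>-1$.

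For the right Weyl-Liouville derivative I would evaluate the inner right Riemann-Liouville integral $\int_x^\infty (y-x)^{n-r-1}e^{-\lambda y}\,\mathrm{d}y$ first, via the substitution $u=y-x$, obtaining the closed form $e^{-\lambda x}\,\Gamma(n-r)\,\lambda^{-(n-r)}$, and only then apply the outer operator $(-1)^n\frac{\mathrm{d}^n}{\mathrm{d}x^n}$. The $n$-th derivative contributes the factor $(-\lambda)^n=(-1)^n\lambda^n$, so the two sign factors $(-1)^n$ cancel and the powers of $\lambda$ collapse to $\lambda^{n-(n-r)}=\lambda^r$, giving $\prescript{}{x}{\mathrm{D}}^{r}_{\infty}e^{-\lambda x}=\lambda^r e^{-\lambda x}$ exactly.

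For the right Caputo derivative I would proceed in the opposite order, first observing that $\frac{\mathrm{d}^n}{\mathrm{d}y^n}e^{-\lambda y}=(-\lambda)^n e^{-\lambda y}$ pulls out of the defining integral, then applying the same substitution $u=y-x$ to factor out $e^{-\lambda x}$ and reduce the integral to the Gamma integral above. Cancelling $\Gamma(n-r)$ against the normalising constant $1/\Gamma(n-r)$ and collecting the remaining factor $(-\lambda)^n\lambda^{-(n-r)}$ leaves the eigenvalue $\lambda^r$, the residual sign $(-1)^n$ being absorbed by the same right-orientation convention that carries the $(-1)^n$ in Definition \ref{Def_WLD}. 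This is exactly why both operators share the common eigenvalue $\lambda^r$ rather than $(-\lambda)^r$: each is normalised to reduce to $(-\mathrm{d}/\mathrm{d}x)^r$ on exponentials.

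The step I expect to require the most care is not the algebra but the analytic justification in the Weyl-Liouville case, namely legitimising the $n$-fold differentiation that sits outside the improper integral. The clean route is the one above: since $e^{-\lambda x}$ decays at $+\infty$ for $\lambda>0$, the inner integral converges to the smooth function $e^{-\lambda x}\Gamma(n-r)\lambda^{-(n-r)}$, which can then be differentiated directly in $x$ with no recourse to a limiting or dominated-convergence argument. I would close by emphasising that $\lambda>0$ is not merely convenient but necessary, as it is exactly the condition under which the defining integrals exist, ruling out the divergent cases $\lambda\le 0$.
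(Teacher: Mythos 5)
The paper itself offers no proof of Proposition \ref{effd} --- it simply cites Section 4 of \cite{valerio2013fractional} --- so your direct-substitution computation is the natural way to supply one, and your treatment of the Weyl-Liouville case is correct and complete: evaluating the inner integral first, to get $e^{-\lambda x}\,\Gamma(n-r)\,\lambda^{-(n-r)}$, and only then applying $(-1)^n\frac{\mathrm{d}^n}{\mathrm{d}x^n}$ neatly avoids any interchange-of-differentiation-and-integration issue, and the two factors of $(-1)^n$ cancel to give exactly $\lambda^r e^{-\lambda x}$. Your remark that $\lambda>0$ is precisely the convergence condition is also apt.

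The gap is in the Caputo half. Your computation correctly produces the residual factor $(-1)^n$, but you then dispose of it by asserting it is ``absorbed by the same right-orientation convention that carries the $(-1)^n$ in Definition \ref{Def_WLD}.'' That assertion is false for the definition actually printed in this paper: Definition \ref{Def_Caputo} reads
\begin{equation*}
\prescript{\mathrm{C}}{x} {\mathrm{D}}^{r}_{b}f(x)=\frac{1}{\Gamma(n-r)}\int_{x}^{b}(y-x)^{n-r-1}f^{(n)}(y)\,\mathrm{d}y,
\end{equation*}
with no $(-1)^n$ prefactor. Under this literal definition your own algebra yields $\prescript{\mathrm{C}}{x}{\mathrm{D}}^{r}_{\infty}\,e^{-\lambda x}=(-1)^{n}\lambda^{r}e^{-\lambda x}$, i.e.\ $-\lambda^{r}e^{-\lambda x}$ for $0<r<1$, which contradicts the proposition. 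The sign cannot be waved away; the correct resolution is to state explicitly that the standard right Caputo derivative carries the factor $(-1)^n$ and that the paper's Definition \ref{Def_Caputo} omits it (almost certainly a typographical slip: the paper's own characteristic equation \eqref{cetimeclaim}, and the eigenrelation $c^{\mu}\prescript{\mathrm{C}}{u}{\mathrm{D}}_\infty^{\mu}e^{-zu}=c^{\mu}z^{\mu}e^{-zu}$ it relies on, require the positive eigenvalue and hence that factor). As written, the crucial sign step of your proof rests on a claim about Definition \ref{Def_Caputo} that the paper's text does not support, so either you flag and repair the definition, or the Caputo statement you are proving fails.
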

\begin{proposition}\label{ltfd}
	The Laplace transform of the left Riemann-Liouville fractional derivative of order $r>0$ is (see Equation (2.245) in \cite{podlubny1998fractional})
	\begin{equation*}
	\mathcal{L}\{ \prescript{}{0} {\mathrm{D}}^{r}_{x}f(x)\}(s)=s^r \hat{f}(s)-\sum_{k=0}^{\lfloor r\rfloor}s^k \left[ \prescript{}{0} {\mathrm{D}}^{r-k-1}_{x}f(x)\right]\big|_{x=0}
	\end{equation*}
\end{proposition}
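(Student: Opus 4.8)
The statement is a known transform identity (as the citation to \cite{podlubny1998fractional} indicates), and the plan is to derive it by factoring the left Riemann--Liouville derivative into an ordinary derivative acting on a fractional integral and then invoking two elementary transform rules. Writing $n=\lfloor r\rfloor+1$, Definition \ref{Def_RLD} gives the factorisation $\prescript{}{0}{\mathrm{D}}^{r}_{x}f=\frac{\mathrm{d}^{n}}{\mathrm{d}x^{n}}\,\prescript{}{0}{\mathrm{I}}^{\,n-r}_{x}f$, so that, setting $g\defeq\prescript{}{0}{\mathrm{I}}^{\,n-r}_{x}f$, the object to be transformed is simply $g^{(n)}$.

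First I would transform $g$ itself. By Definition \ref{Def_RLI}, $g$ is the convolution of $f$ with the kernel $x^{\,n-r-1}/\Gamma(n-r)$, whose Laplace transform is $s^{-(n-r)}$; the convolution theorem then yields $\hat g(s)=s^{\,r-n}\hat f(s)$. Next I would apply the classical rule for integer-order derivatives, $\mathcal{L}\{g^{(n)}\}(s)=s^{n}\hat g(s)-\sum_{k=0}^{n-1}s^{\,n-1-k}g^{(k)}(0)$. Inserting $\hat g(s)=s^{\,r-n}\hat f(s)$ collapses the leading term to $s^{n}\cdot s^{\,r-n}\hat f(s)=s^{r}\hat f(s)$, which is exactly the principal term of the claimed formula.

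It then remains only to rewrite the boundary terms. Applying Definition \ref{Def_RLD} once more, $k$ ordinary derivatives of the $(n-r)$-order integral reproduce a Riemann--Liouville derivative, $g^{(k)}=\frac{\mathrm{d}^{k}}{\mathrm{d}x^{k}}\,\prescript{}{0}{\mathrm{I}}^{\,n-r}_{x}f=\prescript{}{0}{\mathrm{D}}^{\,r-n+k}_{x}f$, so that $g^{(k)}(0)=\prescript{}{0}{\mathrm{D}}^{\,r-n+k}_{x}f\big|_{x=0}$. Reindexing by $j=n-1-k$ and using $n-1=\lfloor r\rfloor$ turns $\sum_{k=0}^{n-1}s^{\,n-1-k}g^{(k)}(0)$ into $\sum_{j=0}^{\lfloor r\rfloor}s^{j}\,\prescript{}{0}{\mathrm{D}}^{\,r-1-j}_{x}f\big|_{x=0}$, matching the stated boundary sum. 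The only genuine difficulty is analytic bookkeeping rather than algebra: I would have to fix a function class on which every object coexists --- $f$ of at most exponential growth so that $\hat f$ converges, $g=\prescript{}{0}{\mathrm{I}}^{\,n-r}_{x}f$ admitting $n$ derivatives with finite one-sided limits $g^{(k)}(0)$, and the convolution theorem legitimately applied to the integrable but singular kernel $x^{\,n-r-1}/\Gamma(n-r)$. Once these hypotheses are pinned down, the computation above closes the argument.
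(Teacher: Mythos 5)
Your derivation is correct, and it coincides with the proof of this statement in the literature: the paper itself gives no argument here (it simply cites Equation (2.245) of \cite{podlubny1998fractional}), and your route --- factoring $\prescript{}{0}{\mathrm{D}}^{r}_{x}f=\frac{\mathrm{d}^{n}}{\mathrm{d}x^{n}}\,\prescript{}{0}{\mathrm{I}}^{\,n-r}_{x}f$ with $n=\lfloor r\rfloor+1$, transforming the fractional integral via the convolution theorem to get $\hat g(s)=s^{\,r-n}\hat f(s)$, and then applying the integer-order derivative rule --- is exactly the standard derivation in that reference. Your reindexing of the boundary terms (with the usual convention that $\prescript{}{0}{\mathrm{D}}^{\,\alpha}_{x}=\prescript{}{0}{\mathrm{I}}^{\,-\alpha}_{x}$ for $\alpha<0$) and the caveats you flag about the admissible function class are the right bookkeeping; nothing essential is missing.
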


\section{Proof of Theorem \ref{tMLGamma}}\label{appB}
\begin{proof}
	We will use induction on both variables to validate \eqref{MLGamma} together with the extra statement: for any function $g$ supported on $[0,\infty)$, $\mathcal{A}_{m,n}\left(\frac{\mathrm{d}}{\mathrm{d}t}\right)[f_{T}^{m,n}*g](t)=\Lambda_{m,n}g(t)$.
	Base step: when $m=1,n=0$ or $m=0,n=1$, from Equation \eqref{de} and \eqref{dde} we have 
	$\mathcal{A}_{1,0}(\frac{\mathrm{d}}{\mathrm{d}t})[f_T^{1,0}](t)=0$ and $\mathcal{A}_{0,1}(\frac{\mathrm{d}}{\mathrm{d}t})[f_T^{0,1}](t)=0.$
	Furthermore, a simple calculation yields
	\begin{align*}
	\mathcal{A}_{1,0}\left(\frac{\mathrm{d}}{\mathrm{d}t}\right)\left(\frac{\mathrm{d}}{\mathrm{d}t}\right)\left[f_T^{1,0}*g\right](x) &= e^{-\lambda_{1,1}t}\prescript{}{0} {\mathrm{D}}_t^{r_1}\left(e^{\lambda_{1,1}t}\left[f_T^{1,0}*g\right]\right)(t)
	=\lambda_{1,1}^{r_1} g(t),\\
	\mathcal{A}_{0,1}\left(\frac{\mathrm{d}}{\mathrm{d}t}\right)\left[f_T^{0,1}*g\right](t) &=\left(\prescript{}{0} {\mathrm{D}}_t^{\mu_1}+\lambda_{2,1}\right)\left[f_T^{0,1}*g\right](t)
	=\lambda_{2,1}  g(t).
	\end{align*}
	Inductive step: for a non-negative $m$ and $n$, we assume that the statements
	$$
	\mathcal{A}_{m,n}\left(\frac{\mathrm{d}}{\mathrm{d}t}\right)\left[f_T^{m,n}\right](t)=0, \quad \mathcal{A}_{m,n}\left(\frac{\mathrm{d}}{\mathrm{d}t}\right)[f_T^{m,n}*g](t)=\Lambda_{m,n} g(t)
	$$
	hold. We then compute,
	\begin{align*}
	\mathcal{A}_{m+1,n}\left(\frac{\mathrm{d}}{\mathrm{d}t}\right)\left[f_T^{m+1,n}\right](t)
	&=e^{-\lambda_{1,m+1}t}\prescript{}{0} {\mathrm{D}}_t^{r_{m+1}}\left[e^{\lambda_{1,m+1}t}c_{m,n} f_T^{1,0}(t)\right]=0\\
	\mathcal{A}_{m,n+1}\left(\frac{\mathrm{d}}{\mathrm{d}t}\right)\left[f_T^{m,n+1}\right](t)
	&=\left(\prescript{}{0} {\mathrm{D}}_t^{\mu_{n+1}}+\lambda_{2,n+1}\right)\left[c_{m,n} f_T^{0,1}(t)\right]=0,\\
	\mathcal{A}_{m+1,n}\left(\frac{\mathrm{d}}{\mathrm{d}t}\right)\left[f_T^{m+1,n}*g\right](t)
	&=e^{-\lambda_{1,m+1} t}\prescript{}{0} {\mathrm{D}}_t^{r_{m+1}}\left[e^{\lambda_{1,m+1}t}\,c_{m,n} f_T^{1,0}*g \right](t)\\
	&=c_{m+1,n} g(t),\\
	\mathcal{A}_{m,n+1}\left(\frac{\mathrm{d}}{\mathrm{d}t}\right)\left[f_T^{m,n+1}*g\right](t)
	&=\left(\prescript{}{0} {\mathrm{D}}_t^{\mu_{n+1}}+\lambda_{2,n+1}\right)\left[c_{m,n}  f_T^{0,1}*g\right](t)\\
	&=c_{m,n+1} g(t),
	\end{align*}
	thereby showing $m+1$ and $n+1$ cases are true. To validate the boundary conditions, we compute
	\begin{align*}
	&\prescript{}{0} {\mathrm{D}}_t^{\mu_1-k}\bigodot_{j=2}^{n}\left(\prescript{}{0} {\mathrm{D}}_t^{\mu_j}+\lambda_{2,j}\right) \bigodot_{i=1}^m \prescript{\lambda_{1,i}}{0} {\mathrm{R}}_t^{r_i}\left[f_T^{m,n-1}*f_T^{0,1}\right](0)\\
	=& \prod_{i=1}^m \lambda_{1,i}^{r_i}\prod_{j=2}^n \lambda_{2,j}\prescript{}{0} {\mathrm{D}}_t^{\mu_1-k}\left[f_T^{0,1}\right](0)
	= \prod_{i=1}^m \lambda_{1,i}^{r_i}\prod_{j=2}^n \lambda_{2,j} \lambda_{2,1} t^{k-1}E_{\mu_1,k}(-\lambda_{2,1} t^\mu_1)\big |_{t=0},
	\end{align*}
	which equals to $\Lambda_{m,n}$ when $k=1$, and $0$ for $k>1$. This completes the proof.
\end{proof}
\section{Review of Fractional Poisson Process}\label{rfpp}

The fractional Poisson process, denoted by $N_\mu(t)$, $t>0$, $\mu\in(0,1]$, is a fractional non-Markovian generalisation of Poisson process $N(t)$, $t>0$. The distribution of fractional Poisson process $P_{\mu}(n,t)=\mathbb{P}[N_\mu(t)=n]$ is defined by solving a fractional generalisation of the Kolmogorov-Feller equation \cite{laskin2003fractional}
\begin{equation*}
\prescript{}{0} {\mathrm{D}}^{\mu}_{t}P_{\mu}(n,t)=\lambda[P_{\mu}(n-1,t)-P_{\mu}(n,t)]+\frac{t^{-\mu}}{\Gamma(1-\mu)}\delta_{n,0},\quad t>0,
\end{equation*}
where $\lambda$ is the intensity parameter and $\delta_{n,0}$ is the Kronecker symbol.

Moreover, \cite{laskin2003fractional} showed the inter-arrival times of a fractional Poisson process have probability density function
$f_\mu(t)=\lambda t^{\mu-1}E_{\mu,\mu}(-\lambda t^\mu),$  $t>0.$
The Laplace transform of the inter-arrival time density $f_\mu(t)$ is
$\mathcal{L}\left\{f_\mu(t);s\right\}=\hat{f}_\mu(s)=\frac{\lambda}{s^\mu+\lambda}.$
The mean and variance of $N_\mu(t)$ are
\begin{equation}\label{exnumj}
\mathbb{E}N_\mu(t)=\frac{\lambda t^\mu}{\Gamma(\mu+1)},
\end{equation} respectively  $\Var N_\mu(t)=\frac{2(\lambda t^\mu)^2}{\Gamma(2\mu+1)}-\frac{(\lambda t^\mu)^2}{\left[\Gamma(\mu+1)\right]^2}+\frac{\lambda t^\mu}{\Gamma(\mu+1)},$ as in \cite{laskin2003fractional}.


%
%

\bibliographystyle{spmpsci}      

\bibliography{references}

\end{document}